\documentclass[a4paper,11pt,leqno]{article}

\usepackage[latin1]{inputenc}
\usepackage[T1]{fontenc} 
\usepackage[english]{babel}
\usepackage{verbatim}
\usepackage{calligra}
\usepackage{enumerate}
\usepackage{dsfont}
\usepackage{amsfonts}
\usepackage{amsmath}
\usepackage{amsthm}
\usepackage{amssymb}
\usepackage{mathtools}
\usepackage{mathrsfs}
\usepackage{color}
\usepackage{graphicx}
\usepackage{bm}
\usepackage{srcltx}

\usepackage{tikz}
\usepackage[retainorgcmds]{IEEEtrantools}

\usepackage{graphicx}		
\usepackage[hypcap=false]{caption}

\theoremstyle{definition}
\newtheorem{defin}{Definition}[section]

\newtheorem{rmk}[defin]{Remark}

\theoremstyle{plane}
\newtheorem{thm}[defin]{Theorem}
\newtheorem{prop}[defin]{Proposition}
\newtheorem{cor}[defin]{Corollary}
\newtheorem{lemma}[defin]{Lemma}

\newcommand{\tsl}{\textsl}

\newcommand{\mc}{\mathcal}
\newcommand{\mf}{\mathfrak}

\newcommand{\veps}{\varepsilon}
\newcommand{\what}{\widehat}
\newcommand{\wtilde}{\widetilde}
\newcommand{\vphi}{\varphi}

\renewcommand{\k}{\kappa}
\newcommand{\s}{\sigma}
\renewcommand{\t}{\tau}
\newcommand{\z}{\zeta}

\newcommand{\de}{\delta}
\renewcommand{\o}{\omega}

\newcommand{\R}{\mathbb{R}}

\newcommand{\N}{\mathbb{N}}
\newcommand{\Z}{\mathbb{Z}}
\newcommand{\T}{\mathbb{T}}

\renewcommand{\div}{{\rm div}\,}
\newcommand{\divh}{{\rm div}_{\!h}}

\newcommand{\curl}{{\rm curl}\,}

\newcommand{\supp}{{\rm supp}\,}

\newcommand{\dx}{ \, {\rm d} x}
\newcommand{\dt}{ \, {\rm d} t}
\newcommand{\B}{B^s_{p, r}}

\newcommand{\weak}{{\rm w}}
\newcommand{\loc}{{\rm loc}}

\allowdisplaybreaks

\def\d{\partial}
\def\div{{\rm div}\,}
\newcommand{\dd}{{\rm d}}

\newcommand{\fra}[1]{\textcolor{blue}{[[FF: #1]]}}

\begin{document}

\title{\textsc{\Large{\textbf{Local and global strong solutions to a reduced model \\ for inviscid micropolar fluids}}}}

\author{\normalsize \textsl{Francesco Fanelli}$\,^{1,2,3},$
\textsl{Pedro Gabriel Fern\'andez-Dalgo}$\,^{1},$ 
\textsl{Mar\'ia Eugenia Mart\'inez Martini}$\,^{4}$
\vspace{.5cm} \\
\footnotesize{$\,^{1}\;$ \textsc{BCAM -- Basque Center for Applied Mathematics}} \\ 
{\footnotesize Alameda de Mazarredo 14, E-48009 Bilbao, Basque Country, SPAIN} \vspace{.2cm} \\
\footnotesize{$\,^{2}\;$ \textsc{Ikerbasque -- Basque Foundation for Science}} \\  
{\footnotesize Plaza Euskadi 5, E-48009 Bilbao, Basque Country, SPAIN} \vspace{.2cm} \\
\footnotesize{$\,^{3}\;$ \textsc{Universit\'e Claude Bernard Lyon 1}, {\it Institut Camille Jordan -- UMR 5208}} \\ 
{\footnotesize 43 blvd. du 11 novembre 1918, F-69622 Villeurbanne cedex, FRANCE} \vspace{.2cm} \\
\footnotesize{$\,^{4}\;$ \textsc{Universidad de Chile}, {\it Center for Mathematical Modeling}} \\ 
{\footnotesize Casilla 170, Correo 3, Santiago, CHILE} \vspace{.3cm} \\
%
%
\footnotesize{Email addresses: \ttfamily{ffanelli@bcamath.org}}, $\;$
\footnotesize{\ttfamily{pfernandez@bcamath.org}}, $\;$
\footnotesize{\ttfamily{maria.martinez.m@uchile.cl}}
\vspace{.2cm}
}

\date\today

\maketitle

\subsubsection*{Abstract}
{\footnotesize 

This paper investigates the well-posedness issue for a reduced two-dimensional model of micropolar fluids.
This reduced model presents a coupling between an Euler-type equation for the velocity field of the fluid and a transport-diffusion equation for the microrotation
field (which is a scalar field, in this setting).

We establish the local existence and uniqueness of strong solutions in the scale of Besov space $B^s_{p,1}$ having regularity index $s\geq1+2/p$.
Furthermore, in the subcritical case when $s>1+2/p$, we prove that these solutions exist globally in time. The global persistence of regularity in the critical
setting $s=1+2/p$ remains open.
}

\paragraph*{\small 2020 Mathematics Subject Classification:}{\footnotesize 35Q35 
(primary);
35A01, 
35B65, 
76B03, 
35B45  
(secondary).}

\paragraph*{\small Keywords: }{\footnotesize micropolar fluids; inviscid fluids; strong solutions; global well-posedness; critical regularity.
}

%
%

\section{Introduction and main results} \label{s:intro}

Micropolar fluids are complex fluids equipped of microstructure. They consist of rigid,
either spherical or randomly oriented, particles suspended in a medium, where the deformation of fluid particles is ignored \cite{Luk}.
These internal particles can undergo independent rotations, leading to non-symmetric stress tensors and a strong coupling between macroscopic velocity and
microscopic angular momentum.

At the mathematical level, the system of micropolar fluid was introduced in Eringen \cite{Erin}, where the author provides a natural modification
to the classical Navier-Stokes equations obtained by incorporating the internal microstructure of the fluid through an additional microrotation field.
In this framework, besides the macroscopic velocity field $U(t,x) \in \R^3$ and pressure field $\pi(t,x) \in \R$ of the fluid, one considers an additional vector field,
describing precisely the above mentioned independent rotation of microscopic particles, denoted here by $M(t,x)\in \R^3$.
This should not be confused with the rotational of $U$.

Following the derivation given in \cite{Luk}, the dynamics of a general, three-dimensional, homogeneous micropolar fluid can be described by a non-linear system
of equations, encoding basic physical conservation principles, as detailed below.
\begin{itemize}
\item[(i)] Incompressibility: 
\[
\div U \,=\,0\,.
\]
\item [(ii)] Conservation of linear momentum:
\[
\partial_t U\, +\, (U \cdot \nabla) U\, +\, \nabla \pi\, -\, \left( \mu + \mu_r\right) \,\Delta U\, =\, 2\, \mu_r\, \curl M\,.
\]
\item[(iii)] Conservation of angular momentum: 
\[
\partial_t M\, +\, (U \cdot \nabla )M\, -\, (c_a+c_d)\,\Delta M\,-\,\left( c_0 + c_d-c_a\right)\, \nabla \div M\,+\, 4\,\mu_r\, M\, =\, 2 \,\mu_r\, \curl U\,.
\]
\end{itemize}
In the equations above, $\mu\ge 0$ denotes the dynamic viscosity coefficient of the fluid, $\mu_r\ge 0$ represents the dynamic microrotation viscosity
and $c_0$, $c_d$, $c_a$ are coefficients of angular viscosities. 

The micropolar fluid model is particularly suitable for describing complex fluids such as suspensions, polymers, or biological fluids, where the internal structure
significantly influences the macroscopic behavior. The model has also been used in the study of porous media flows and lubrication theory, as well as in
engineering applications involving microfluidics and materials with internal structure (see \tsl{e.g.} \cite{Erin}, \cite{Erin2} and \cite{Luk} for comprehensive discussions).

The analysis of this model has developed in parallel with that of the classical Navier-Stokes system. Similar to it,
Galdi and Rionero \cite{GaldiRionero1977} proved the existence of global in time finite energy weak solutions for $3$-D micropolar fluid equations.
We refer to \cite{Luk0, Luk1, Luk, Guterres, Rojas-Medar} for further results in this direction. Strong solutions are typically defined locally in time
(see \cite{Boldrini0, Boldrini, Luk1, Ortega, Rojas-Medar0} for results in various functional settings), but become global
under suitable smallness assumptions (see \tsl{e.g.} \cite{Cruz} for $H^1\times H^1$ solutions and
\cite{Chen2012} for the case of scaling-critical spaces $\dot B^{3/p-1}_{p,1}$).
However, as in the classical case, the question of global regularity for large data in three space dimensions remains open. 
Thus, regularity criteria have been proposed, in the spirit of the celebrated Prodi-Serrin criterion 
for the classical Navier-Stokes equations. For instance, Dong and Chen \cite{DongChen} proved the first Serrin-type regularity criteria under the condition
$u\in L^q\big([0, T], L^{p}\big)$, $2/q+3/p\le 1$, improved later to Lorentz spaces $L_x^{p, \infty}$ by \cite{Yuan} and to the scaling critical case $2/q+3/p= 1$ by
\cite{Loayza}.
See also \cite{OrtegaTorres, Ragusa} for further results in this direction.

\subsection{Two-dimensional reductions} \label{ss:reductions}

Owing to the complexity of the resulting system of equations, two-dimensional reductions have been proposed in the literature, as they look
better suited for mathematical analysis and simulations.
In order to introduce them, we first need some preparation.

For vectors $A\in \R^3$, consider the decomposition $A= (A_h, A^3)$, with $A_h=(A^1, A^2)\in \R^2$ and $A^3\in \R$. One then formulates the \tsl{ansatz}
of flat flux, in the sense that one restricts the attention to special solutions of the form
\[
U\,=\,\big(u_h(t, x_h), 0\big)\qquad \mbox{ and }\qquad M\,=\,\big(0, 0, m(t, x_h)\big)\,, \qquad\qquad \mbox{ with }\qquad \divh u_h\,=\,0\,.
\]
Here, we have set $\divh u_h\,=\,\d_1u^1\,+\,\d_2u^2$ to be the classical two-dimensional divergence operator.
In particular, with the above \tsl{ansatz}, one obtains $\div M=0$, $\curl M\,=\,\big(\partial_2m, -\partial_1 m, 0\big)$ and $\curl U=(0, 0 , \omega)$,
where $\o\,=\,\d_1u^2\,-\,\d_2u^1$ is the two-dimensional vorticity of the fluid.

With this special form of solutions, and substituting $u_h=u$, the system above becomes the following 2-D reduced model:
\begin{equation}\label{eq:2D-microp-gen}
\begin{cases}
\partial_t u\, +\, (u \cdot \nabla) u \,+\, \nabla \pi\, -\, \left( \mu + \mu_r\right)\, \Delta u \,=\, -\,2\, \mu_r\, \nabla^\perp m \\[1ex]
\partial_t m\, +\, u \cdot \nabla m\,-\,(c_a+c_d)\,\Delta m\, +\, 4\,\mu_r\, m\, =\, 2\, \mu_r\, \omega \\[1ex]
\div u =0\,.
\end{cases}
\end{equation}
It goes without saying that all the operators appearing in system \eqref{eq:2D-microp-gen} are now two-dimensional differential operators, acting only
with respect to the horizontal variables $x_h\in\R^2$. 

\medbreak
As a matter of fact, the theory significantly improve for the $2$-D reduction \eqref{eq:2D-microp-gen}, with respect to the general three-dimensional system.
For instance, strong solutions $(u,m)\in \mc C\big(\R_+; H^1(\R^2)\big) \cap L_{\loc}^2\big(\R_+; H^2(\R^2)\big)$
are known to exist globally in time; see \tsl{e.g.} to \cite{DongChen}, where decay to solutions of the heat equation is shown under suitable assumptions
on the initial data.
In \cite{DongZhang}, the authors constructed global strong solutions to equations \eqref{eq:2D-microp-gen} \emph{without angular viscosity} (that is,
for the choice $c_a+c_d=0$, $\mu+\mu_r>0$). This result was later extended and refined in \cite{DongLi} by  obtaining global well-posedness and decay estimates under
partial dissipation. 
In this context, more recently, Brandolese, Busuioc, Iftime
and Perusato \cite{B-B-I-P} investigated the role of viscosity parameters in large-time behavior of global finite energy weak solutions to the two-dimensional
system.
In \cite{Xue}, Xue established global well-posedness for the reduced model \eqref{eq:2D-microp-gen}
within the framework of inhomogeneous Besov spaces.
A study of low regularity solutions, including vortex sheets type solutions, was recently performed in \cite{BL-C-S} by
B\'ejar-L\'opez, Cunha and Soler.

We remark that all the above mentioned works kept the viscosity in the velocity equation and additionally assumed some dissipation mechanism
(either microrotation viscosity, or at least damping) in the equation of conservation of angular momentum, that is the equation for $m$.

\medbreak
In the present paper, instead, we want to devote attention to the ``dual'' situation in which we assume the fluid to be inviscid
(that is, $\mu+\mu_r=0$) and we impose absence of any damping mechanism for the microrotation field $m$, while we keep the effect
of the microrotation viscosity (namely, $c_a+c_d>0$).
We detail in Subsection \ref{ss:equations} below the precise system of equations we are going to study throughout this work.

The goal of this work is to continue the investigation carried out in \cite{F-FD}, where global existence and uniqueness of solutions \tsl{\`a la Yudovich} was established,
and to prove global persistence of higher regularities. 

As a matter of fact, we have recently discovered that the system of equations under study here was introduced even before \cite{F-FD}, and
already appeared in the work \cite{Jiu} by Jiu, Liu, Wu and Yu.
Therein, the authors studied the well-posedness issue in a smooth bounded domain of $\R^2$. Compared to our study, they worked in the framework
of Sobolev spaces $H^s$ and imposed higher regularity hypotheses on the initial datum $\big(u_0,m_0\big)$ than those we will assume here.

\subsection{The equations under study} \label{ss:equations}

Let $u=(u^1,u^2)\in\R^2$ denote the velocity field of the fluid, and let $\omega$ be the associated scalar vorticity, defined by
\[
 \o\,:=\,\d_1u^2\,-\,\d_2u^1\,. 
\]
Let the scalar field $\Pi\in\R$ represent the fluid pressure and $m\in\R$ the microrotation field, encoding the microstructure of the fluid particles.
The system of equations under consideration in this work is the following one:
\begin{equation} \label{eq:2D-microp}
\left\{\begin{array}{l}
        \d_tu\,+\,(u\cdot\nabla)u\,+\,\nabla\Pi\,=\,-\,\alpha\,\nabla^\perp m \\[1ex]
        \d_tm\,+\,u\cdot\nabla m\,-\,\k\,\Delta m\,=\,\alpha\,\o \\[1ex]
        \div u\,=\,0\,.
       \end{array}
\right.
\end{equation}
Here, the parameters $\alpha>0$ and $\kappa>0$ are fixed positive constants, and we adopt the notation
\[
\nabla^\perp m \,:= (-\partial_2 m,\partial_1 m).
\]
All the (scalar and vector) fields appearing in equations \eqref{eq:2D-microp} are depending on the time variable $t$ and the space variable $x$.
Throughout this work, we consider
\[
 (t,x)\,\in\,\R_+\times\Omega\,,\qquad \mbox{ with }\qquad \Omega\,=\,\R^2\ \mbox{ or }\ \T^2\,,
\]
and system \eqref{eq:2D-microp} is supplemented with the initial data
\[
 \big(u,m\big)_{|t=0}\,=\,\big(u_0,m_0\big)\,,
\]
for which we will state precise assumptions in Subsection \ref{ss:results}.

Notice that, compared to \eqref{eq:2D-microp-gen}, the above reduced system \eqref{eq:2D-microp} completely removes viscous diffusion effect from the momentum equation
(that is, the equation for the velocity field $u$),
as well as any damping mechanism from the microrotation field $m$.

\medbreak
The main objective of this work is to establish a (possibly global) well-posedness theory for the inviscid micropolar system \eqref{eq:2D-microp},
in the framework of strong solutions and with no restriction on the size of the initial datum.
Besides its intrinsic interest, such a theory provides a natural framework for the approximation of the Yudovich type solutions constructed in \cite{F-FD}.

Here, we will look for solutions possessing somehow \emph{minimal regularity}. In this respect, we point out that, owing to the intricate
coupling of the equations, system \eqref{eq:2D-microp} does not really possess a suitable scaling.
Instead, in order to find a convenient functional framework for studying well-posedness, one has to remark that system \eqref{eq:2D-microp},
and especially the equation for $u$, presents an underlying transport structure by the velocity field $u$ itself. This requires
at least $u\in L^1_T(W^{1,\infty})$ in order to guarantee persistence of the initial smoothness.
At the same time, propagating the Lipschitz norm of $u$ requires the right-hand side to be in the same space; thus one also needs, roughly speaking,
$\nabla m$ to have the same space regularity as $u$. In turn,
this combines well with the structure of the equation for $m$: its right-hand side depends on $\omega\,=\,\curl(u)$, so it is
one derivative less regular than $u$, but, thanks to parabolic regularisation, one may expect to gain two orders of smoothness on $m$, implying that
$\nabla m$ possesses indeed the same regularity as $u$.

In this work, we decide to focus on the framework of Besov spaces $B^s_{p,1}$, for both a sake of generality and mathematical
robustness. For simplicity, we decide to focus on spaces with third index equal to $1$,
in order to avoid technical complications coming from
the study of parabolic equations in general $B^s_{p,r}$ spaces; however, we expect that it is possible to handle the general case in a similar manner.

To conclude, let us mention that, in the argument depicted above, one should pay attention to the endpoint case in which $p=+\infty$,
as singular integrals are involved in our study. Here we will make use of our choice of working in Besov spaces, as in this setting
one can prove a sort of maximal regularity effect (gain of two full derivatives with respect of the right-hand side) for parabolic equations
also in the case $B^s_{\infty,r}$. At the same time, in order to treat the velocity field and its vorticity, for which no smoothing
effect can be expected, we will need to impose an additional finite integrability assumption
either on $u_0$ or on $\o_0\,:=\,\curl(u_0)$.

\subsection{Statement of the main results} \label{ss:results}

As already mentioned, in the present paper we address the question of well-posedness of the reduced micropolar fluid system \eqref{eq:2D-microp},
in the framework of Besov spaces of type $B^s_{p,1}$, under suitable assumptions
on $p$ and $s$. For simplicity of presentation and analysis, we only focus on the case of third Besov index equal to $1$,
but we expect that similar results hold true also for general $r\in [1,+\infty]$ in the subcritical case $s\,>\,1+2/p$.

Let us first consider the case of a finite integrability index $p\in\,]1,+\infty[\,$. 

\begin{thm} \label{th:global-strong}
Let $p\in\,]1,+\infty[\,$ and let $s\,\geq\,1\,+\,2/p$. Take an initial datum $\big(u_0,m_0\big)$ such that
\[
 \div u_0\,=\,0\,,\qquad u_0\,\in\,B^s_{p,1}(\R^2)\qquad \mbox{ and }\qquad m_0\,\in\,B^{s-1}_{p,1}(\R^2)\,.
\]

Then, there exists a time $T>0$ and a solution $\big(u,m\big)$ to system \eqref{eq:2D-microp} related to the initial datum $\big(u_0,m_0\big)$, such that
\[
 u\,\in\,\mc C\big([0,T];B^s_{p,1}(\R^2)\big)\qquad \mbox{ and }\qquad m\,\in\,\mc C\big([0,T];B^{s-1}_{p,1}(\R^2)\big)\,\cap\,L^1\big([0,T];B^{s+1}_{p,1}(\R^2)\big)\,.
\]
Moreover, the solution is global if $s>1+2/p$, under one of the following additional assumptions:
\begin{enumerate}[\rm (G.1)]
 \item either $1<p\leq2$,
 \item or $2<p<+\infty$, if one assumes further that  $u_0$ and $m_0$ belong to $L^2(\R^2)$;
 \item or $2<p<+\infty$, if one assumes further that there exists $p_0\in\,]1,2[\,$ such that $\o_0$ and $m_0$ belong to $L^{p_0}(\R^2)$,
where we have defined  $\o_0\,:=\,\curl(u_0)\,=\,\d_1u_0^2\,-\,\d_2u_0^1$ to be the vorticity of the initial velocity $u_0$.
\end{enumerate}
Under either condition {\rm (G.1)} or condition {\rm (G.2)}, one further obtains that
\[
 u\,\in\,\mc C\big(\R_+;L^2(\R^2)\big)\qquad \mbox{ and }\qquad m\,\in\,\mc C\big(\R_+;L^2(\R^2)\big)\,\cap\,L^2_\loc\big(\R_+;H^1(\R^2)\big)\,.
\]
Under condition {\rm (G.3)}, instead, after defining the vorticity $\o\,:=\,\curl(u)\,=\,\d_1u^2\,-\,\d_2u^1$, one also gets
\[
 \o\,,\,m \;\in\,\mc C\big(\R_+;L^{p_0}(\R^2)\big)\,.
\]
Finally, the previous (local or global) solution is unique in its functional class in any of the following cases:
\begin{enumerate}[\rm (U.1)]
 \item if $s\,\geq\, 2\,\big(1\,+\,1/p\big)$;
 \item if $1\,+\,2/p\,\leq\,s\,<\,2\,+\,2/p$ and one further assumes any of the conditions {\rm (G.1)}, {\rm (G.2)} or {\rm (G.3)}.
\end{enumerate}

\end{thm}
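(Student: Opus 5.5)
The plan is to follow the classical scheme for transport/parabolic systems in Besov spaces: a priori estimates, construction through a regularised or Friedrichs-type iterative scheme, compactness, and a separate stability estimate for uniqueness. The a priori estimates couple three tools. For $u$, and more conveniently for the vorticity $\omega$, we use transport estimates in $B^{s}_{p,1}$ and $B^{s-1}_{p,1}$. The vorticity satisfies
\[
\partial_t\omega+u\cdot\nabla\omega\,=\,-\,\alpha\,\Delta m\,,
\]
since $\curl\nabla^\perp m=\Delta m$. For $m$ we use maximal regularity of the heat semigroup in $\widetilde L^1_T(B^{s+1}_{p,1})\cap \mc C_T(B^{s-1}_{p,1})$, with right-hand side $\alpha\omega-u\cdot\nabla m\in L^1_T(B^{s-1}_{p,1})$. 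The condition $s\ge1+2/p$ guarantees the embedding $B^{s}_{p,1}\hookrightarrow W^{1,\infty}$, so transport estimates close with the exponential factor $\exp\big(C\int_0^t\|\nabla u\|_{B^{s-1}_{p,1}}\big)$. For the pressure and $u$ itself we invoke Biot--Savart, or equivalently the Leray projector, with the low frequencies controlled because $u_0\in B^s_{p,1}$ and $p\in\,]1,\infty[$.

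To close the local estimate, set $X(t)=\|u\|_{L^\infty_t B^s_{p,1}}+\|m\|_{L^\infty_tB^{s-1}_{p,1}}+\kappa\|m\|_{L^1_tB^{s+1}_{p,1}}$. Feeding the heat bound into the transport bound gives
\[
X(t)\,\le\, C\big(X(0)+(1+t)\,tX(t)+X(t)^2 t^{\theta}\big)e^{CtX(t)}
\]
for some $\theta>0$. The linear term $\alpha\omega$ in the $m$ equation costs only a factor $t$ in $L^1_t$. The forcing $\nabla^\perp m$ in the $u$ equation is controlled by $\|m\|_{L^1_tB^{s+1}_{p,1}}$ after interpolating against $L^\infty_tB^{s-1}_{p,1}$. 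A bootstrap then gives a uniform bound on a short time $T$.

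Existence follows by solving linearised iterates. We take $u^{n+1}$ transported by $u^n$ with forcing $\nabla^\perp m^n$, and $m^{n+1}$ solving the heat equation with drift $u^n$. The bounds above are uniform in $n$. Convergence is proved in a lower norm, for instance $B^{s-1}_{p,1}\times B^{s-2}_{p,1}$, and the limit is recovered by Fatou and interpolation, with time continuity from the equations.

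Uniqueness and globality are the delicate parts. Uniqueness needs a stability estimate one derivative below. When $s\ge2+2/p$ (case (U.1)), $s-1\ge1+2/p$, so estimating the difference in $B^{s-1}_{p,1}\times B^{s-2}_{p,1}$ is standard. In the range (U.2) we expect to estimate differences in the energy space $L^2\times L^2$ (with $\nabla\delta m\in L^2_TL^2$), or in $L^{p_0}$-based spaces under (G.3). We would use the Lipschitz bounds on both solutions and the parabolic gain on $\delta m$ to absorb the term $\alpha\nabla^\perp\delta m$ tested against $\delta u$. This requires the extra integrability provided by (G.1)--(G.3); for $p\le2$ the $L^2$ integrability of $u,m$ comes from the embedding of $B^s_{p,1}$ together with low-frequency control.

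For globality we need a continuation criterion, namely that $\int_0^T\|\nabla u\|_{L^\infty}+\|\nabla m\|_{L^\infty}<\infty$ prevents blow-up, and then bounds on these quantities. The basic energy identity is
\[
\|u\|_{L^2}^2+\|m\|_{L^2}^2+2\kappa\int\|\nabla m\|^2_{L^2}\,=\,\text{data}\,,
\]
in which the coupling cancels by antisymmetry: $\int\nabla^\perp m\cdot u=-\int m\,\omega$. Next come $L^q$ bounds on $\omega$ and $m$ as in the Yudovich theory of the earlier work (under (G.3) via $L^{p_0}$). The crucial quantity is $\Gamma=\omega+\frac{\alpha}{\kappa}m$, which removes the $\Delta m$ forcing:
\[
\partial_t\Gamma+u\cdot\nabla\Gamma\,=\,\frac{\alpha^2}{\kappa}\,\omega\,=\,\frac{\alpha^2}{\kappa}\Big(\Gamma-\frac{\alpha}{\kappa}m\Big)\,.
\]
This gives global $L^q\cap L^\infty$ control of $\Gamma$, hence of $\omega$ once $m$ is bounded via parabolic smoothing. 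We then propagate $\Gamma$ in $B^{s-1}_{p,1}$ and $m$ in $\widetilde L^1(B^{s+1}_{p,1})$. The logarithmic inequality $\|\nabla u\|_{L^\infty}\lesssim \|\omega\|_{L^2\cap L^\infty}\log\big(e+\|\omega\|_{B^{s-1}_{p,1}}\big)$, valid in the strictly subcritical case $s-1>2/p$, then gives an Osgood/Gronwall-type bound and hence global existence.

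The main obstacle is the step that controls $\|\nabla m\|_{L^\infty}$ and $\|\omega\|_{L^\infty}$ globally. The problem is that the heat equation with right-hand side $\alpha\omega$ only places $\nabla m$ in $L^1_T(B^{s}_{p,1})$ in terms of the very norm we want to bound. One has to exploit the $\Gamma$ structure, together with the subcriticality $s>1+2/p$, to obtain a linear-log inequality rather than a superlinear one. This is precisely why the critical case stays open.
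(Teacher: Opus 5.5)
There is a genuine gap in the global part. Your local scheme and uniqueness strategy are reasonable as sketches. For (U.1) you estimate differences in $B^{s-1}_{p,1}\times B^{s-2}_{p,1}$ rather than the paper's $W^{1,p}\times L^p$ estimate with maximal regularity for $\Delta\delta m$, and for (U.2) your $L^2$ stability matches the paper's. The gap sits exactly at the step you call ``the main obstacle'' and leave unresolved.

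\textbf{The missing linear estimate.} The log inequality plus Osgood argument needs a Besov estimate that is \emph{linear} in $E(t)=\|u\|_{L^\infty_tB^s_{p,1}}+\|m\|_{L^\infty_tB^{s-1}_{p,1}\cap L^1_tB^{s+1}_{p,1}}$. Its coefficients may involve only $\|\nabla u\|_{L^\infty}$ and low norms that are bounded globally. Your proposal never produces such an estimate:
\begin{itemize}
\item the transport bound you quote carries $\exp\big(C\int_0^t\|\nabla u\|_{B^{s-1}_{p,1}}\big)$, which is useless for Osgood;
\item your continuation criterion requires $\int_0^T\|\nabla m\|_{L^\infty}$, which you cannot bound.
\end{itemize}
The linear forcing $\alpha\omega$ in the heat equation is harmless. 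The real danger is a top-order norm multiplied by $\|\nabla m\|_{L^\infty}$. By contrast, $\|\omega\|_{L^\infty}$ and $\|m\|_{L^\infty}$ are not an obstacle: the transport bound for $\Gamma$ and the maximum principle for $m$ form a closed linear Gr\"onwall system.

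\textbf{How the paper avoids $\|\nabla m\|_{L^\infty}$.}
\begin{itemize}
\item For $\Gamma$ it uses the commutator bound $\sum_j2^{j(s-1)}\|[u\cdot\nabla,\Delta_j]\Gamma\|_{L^p}\lesssim\|\nabla u\|_{L^\infty}\|\Gamma\|_{B^{s-1}_{p,1}}+\|\omega\|_{B^{s-1}_{p,1}}\|\Gamma\|_{L^\infty}$.
\item For the drift it writes $u\cdot\nabla m=\mathcal{T}_u\nabla m+\mathcal{T}_{\nabla m}u+\sum_j\partial_j\mathcal{R}(u^j,m)$. The last two pieces are bounded by $\|m\|_{L^\infty}\|u\|_{B^s_{p,1}}$, using $\|\nabla m\|_{B^{-1}_{\infty,\infty}}\lesssim\|m\|_{L^\infty}$.
\item The remaining piece satisfies $\|\mathcal{T}_u\nabla m\|_{B^{s-1}_{p,1}}\lesssim\|u\|_{L^\infty}\|m\|_{B^s_{p,1}}$. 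It is absorbed into the parabolic gain by interpolating $B^s_{p,1}$ between $B^{s-1}_{p,1}$ and $B^{s+1}_{p,1}$ and applying Young's inequality with weight $\delta/(1+t)$.
\end{itemize}
This yields $E(t)\lesssim(1+t)^2\big(E(0)+\int_0^t\mathcal{I}\,E\,{\rm d}\tau\big)$ with $\mathcal{I}=1+\|\nabla u\|_{L^\infty}+\|u\|_{L^\infty}^2+\|\omega\|_{L^\infty}+\|m\|_{L^\infty}$. Only then does $\|\nabla u\|_{L^\infty}\lesssim K\log(e+E)$ close the argument.

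\textbf{Global bound on $\|u\|_{L^\infty}$.} The estimate above needs it, and you never derive it. The paper gets it from Gagliardo--Nirenberg, $\|u\|^2_{L^\infty}\lesssim\|u\|_{L^2}\|\omega\|_{L^\infty}$, in the finite-energy cases. Otherwise it uses the Biot--Savart law with $\omega\in L^{p_0}\cap L^\infty$, or $\omega\in L^p\cap L^\infty$ when $p<2$.

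\textbf{Your energy ``identity'' is false.} Since $-\alpha\int\nabla^\perp m\cdot u=\alpha\int m\,\omega$, the two coupling terms add to $2\alpha\int m\,\omega$ rather than cancel. After Young's inequality you only get $e^{Ct}$ growth, which still suffices. Two related issues:
\begin{itemize}
\item under (G.3) no energy is available at all;
\item under (G.2) with $p>2$, $\omega$ need not lie in $L^2$, so your log inequality with prefactor $\|\omega\|_{L^2\cap L^\infty}$ must be adapted, for instance by controlling low frequencies through $\|u\|_{L^\infty}$.
\end{itemize}

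\textbf{Uniqueness under (G.3) with $s<2+2/p$.} Saying ``$L^{p_0}$-based spaces'' is not an argument. The paper shows that the difference of two solutions with the same datum lies in $W^{1,r}([0,T];L^2)$. It uses $\nabla u_j,\omega_j,m_j\in L^2$, $u_j\in L^\infty$, and a maximal-regularity bootstrap for $\delta m$. The $L^2$ stability estimate then applies even though $u_j\notin L^2$.
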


It must be noted that Theorem \ref{th:global-strong} fails to prove global existence in the critical case $s=1+2/p$. This happens also for the case $p=+\infty$ (see
also Remark \ref{rmk:Yudovich} below). We refer to Subsection \ref{rmkCritical} for more details about the issues arising in the critical setting. 

\medbreak
Next, we consider the endpoint case $p=+\infty$. In this situation, we need an extra integrability assumption on the initial vorticity field $\o_0$
and on the initial microrotation field $m_0$. It goes without saying that this assumption could be replaced by a finite energy hypothesis on the initial
datum $\big(u_0,m_0\big)$, in the spirit of condition (G.2) above. However, we omit the details here and simply focus in the
somehow more difficult case.

\begin{thm} \label{th:endpoint}
Let $s\,\geq\,1$. Take an initial datum $\big(u_0,m_0\big)$ such that
\[
 \div u_0\,=\,0\,,\qquad u_0\,\in\,B^s_{\infty,1}(\R^2)\qquad \mbox{ and }\qquad m_0\,\in\,B^{s-1}_{\infty,1}(\R^2)\,.
\]
Assume moreover that there exists $p_0\in\,]1,2[\,$ such that both $\o_0$ and $m_0$ belong to $L^{p_0}(\R^2)$,
where we have defined  $\o_0\,:=\,\curl(u_0)\,=\,\d_1u_0^2\,-\,\d_2u_0^1$ to be the vorticity of the initial velocity $u_0$.

Then, there exists a time  $T>0$ and  a unique solution $\big(u,m\big)$ to system \eqref{eq:2D-microp} related to the initial datum $\big(u_0,m_0\big)$, such that
\begin{align*}
& u\,\in\,\mc C\big([0, T];B^s_{\infty,1}(\R^2)\big)\,, 
\qquad \mbox{ with }\quad \o\,\in\, \mc C\big([0, T];L^{p_0}(\R^2)\big)\,, \\
&\qquad \mbox{ and }\qquad\qquad
m\,\in\,\mc C\big([0, T];B^{s-1}_{\infty,1}(\R^2)\big)\,\cap\,\mc C\big([0, T];L^{p_0}(\R^2)\big)\,\cap\,
L^1\big([0, T];B^{s+1}_{\infty,1}(\R^2)\big)\,.
\end{align*}
In addition, if $s>1$, the solution is global. 
\end{thm}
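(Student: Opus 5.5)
The proof of Theorem \ref{th:endpoint} will follow the usual scheme: \emph{a priori} estimates, construction of approximate solutions, compactness, uniqueness, and finally a continuation criterion combined with a global bound when $s>1$.

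\textbf{Local existence.} I would build a Friedrichs-type approximation, truncating the data with the low-frequency cut-off $S_n$ and solving a regularised system. The main task is to derive estimates that are uniform in $n$. Set
\[
E(T):=\|u\|_{L^\infty_T(B^s_{\infty,1})}+\|m\|_{L^\infty_T(B^{s-1}_{\infty,1})}+\kappa\|m\|_{L^1_T(B^{s+1}_{\infty,1})}.
\]
The estimates come from three ingredients.
\begin{itemize}
\item Transport estimates in Besov spaces for the momentum equation, whose forcing is $\alpha\nabla^\perp m$, bounded in $L^1_T(B^{s}_{\infty,1})$.
\item Parabolic maximal regularity in $B^{s-1}_{\infty,1}$ for the $m$ equation, whose source is $\alpha\omega\in L^1_T(B^{s-1}_{\infty,1})$. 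The convection term $u\cdot\nabla m$ is treated either as a commutator or as a source, since $\|u\cdot\nabla m\|_{B^{s-1}_{\infty,1}}\lesssim\|u\|_{B^s_{\infty,1}}\|m\|_{B^{s}_{\infty,1}}$ for $s\ge1$, and then interpolated.
\item The pressure. With $p=\infty$ one cannot invert Riesz transforms directly, so I would write $\nabla\Pi=\nabla(-\Delta)^{-1}\div(u\cdot\nabla u)$. I would control the low frequencies through $\div(u\cdot\nabla u)=\div\div(u\otimes u)$ together with $u\in L^\infty$. Alternatively, I would work at the level of vorticity, $\partial_t\omega+u\cdot\nabla\omega=\alpha\Delta m$, and recover $u$ by Biot--Savart, using $\omega\in L^{p_0}$ for the low frequencies of $u$ (through $\|\Delta_{-1}u\|_{L^\infty}\lesssim\|\omega\|_{L^{p_0}}$ since $p_0<2$).
\end{itemize}
This is the reason for the $L^{p_0}$ hypothesis. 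I propagate it with the $L^{p_0}$ estimates
\[
\|\omega(t)\|_{L^{p_0}}\le\|\omega_0\|_{L^{p_0}}+\alpha\int_0^t\|\Delta m\|_{L^{p_0}},
\]
where $\|\Delta m\|_{L^{p_0}}$ is bounded by the maximal $L^1_TL^{p_0}$-type regularity of the heat equation. Closing these at short time gives $E(T)\le C E(0)$ for small $T$. Compactness then follows from bounds in lower-order spaces together with Aubin--Lions/Ascoli, and time continuity is standard. Uniqueness I would prove by estimating the difference $(\delta u,\delta m)$ in the lower space $B^{0}_{\infty,\infty}$ or $B^{s-1}_{\infty,1}$ with a logarithmic/Osgood argument in the critical case $s=1$, using $\delta\omega\in L^{p_0}$.

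\textbf{Global existence for $s>1$.} First I would prove the continuation criterion: the solution persists as long as $\int_0^T\|\nabla u\|_{L^\infty}\,dt<\infty$. Since $s>1$, the linear-in-top-norm estimates are tame, and Gronwall gives this. The key step is then a global bound on the Lipschitz norm. Here I follow the Yudovich theory of \cite{F-FD}. The combination of the $L^{p_0}$ and $L^\infty$ bounds on $\omega$ and on $m$ gives global control of $\|\omega\|_{L^{p_0}\cap L^\infty}$, because the coupling $\alpha\Delta m$ is absorbed through a change of unknown. Concretely, I would introduce
\[
\Gamma:=\omega-\tfrac{\alpha}{\kappa}m,
\]
which satisfies a transport equation with a lower-order source $-\frac{\alpha^2}{\kappa}\omega-\frac{\alpha}{\kappa}\,\cdots$ and hence obeys maximum principles.

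Next I would use the smoothing of $m$: $\nabla m\in L^1_T(B^{s}_{\infty,1})$ feeds into the Besov estimate for $\omega$ in $B^{s-1}_{\infty,1}$ with $s-1>0$. This uses the logarithmic interpolation inequality
\[
\|\nabla u\|_{L^\infty}\lesssim\|\omega\|_{L^{p_0}\cap L^\infty}\,\log\!\big(e+\|\omega\|_{B^{s-1}_{\infty,1}}\big),
\]
which requires $s-1>0$, together with Gronwall to give a double-exponential bound. Here $\|\omega\|_{B^{s-1}_{\infty,1}}$ is controlled by the transport estimate, with $e^{\int\|\nabla u\|}$, plus the $m$ contribution. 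The $m$ contribution is in turn bounded, via parabolic regularity, by a \emph{lower} norm of $\omega$ times small factors. I would do this by splitting at a short time step or by using the $\Gamma$ unknown to avoid losing a derivative.

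I expect this bootstrap to be the main obstacle: making the $m$-feedback in the $B^{s-1}_{\infty,1}$ estimate of $\omega$ compatible with the log-Lipschitz Gronwall argument. It fails precisely at $s=1$, where the logarithmic inequality is unavailable.
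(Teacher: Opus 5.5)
Your overall plan matches the paper's: Lebesgue bounds, low frequencies of $u$ through Biot--Savart and $L^{p_0}$, Besov estimates, then logarithmic interpolation and Osgood for $s>1$. However, the step by which you propagate $\omega\in L^{p_0}$ in the local part does not work. The bound $\|\omega(t)\|_{L^{p_0}}\le\|\omega_0\|_{L^{p_0}}+\alpha\int_0^t\|\Delta m\|_{L^{p_0}}$ requires $\Delta m\in L^1_T(L^{p_0})$. The heat equation has no maximal regularity in $L^1_T(L^{p_0})$, and maximal regularity in $L^r_T(L^{p_0})$ with $r>1$ needs $m_0\in\dot B^{2-2/r}_{p_0,r}$. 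The hypothesis $m_0\in L^{p_0}\cap B^{s-1}_{\infty,1}$ gives no positive smoothness at integrability $p_0$. So already $\int_0^T\|\Delta e^{\kappa t\Delta}m_0\|_{L^{p_0}}\,\mathrm{d}t$, which is finite essentially only when $m_0\in B^0_{p_0,1}$, may be infinite. This is exactly why the paper uses the unknown $\Gamma=\frac1\alpha\omega+\frac1\kappa m$ from the outset, including for the local bounds. It solves $\partial_t\Gamma+u\cdot\nabla\Gamma=\frac{\alpha^2}{\kappa}\Gamma-\frac{\alpha^2}{\kappa^2}m$, a transport equation with zero-order source. Combined with $L^q$ and maximum-principle bounds for $m$, this gives $\|\omega(t)\|_{L^{p_0}\cap L^\infty}+\|m(t)\|_{L^{p_0}\cap L^\infty}\lesssim e^{Ct}$ for all $t$, hence $\|u(t)\|_{L^{q_0}\cap L^\infty}\lesssim e^{Ct}$. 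You do bring this idea into the global part, but with a sign error that destroys it. Since $\curl(-\alpha\nabla^\perp m)=-\alpha\Delta m$, one has $\partial_t\omega+u\cdot\nabla\omega=-\alpha\Delta m$, and the good unknown is $\omega+\frac\alpha\kappa m$. For your $\omega-\frac\alpha\kappa m$ the source is $-2\alpha\Delta m-\frac{\alpha^2}{\kappa}\omega$, so nothing cancels.

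The global step you call the main obstacle is left open. It is not closed by short-time splitting or small factors: parabolic regularity controls $\|m\|_{L^1_t(B^{s+1}_{\infty,1})}$ by $\int_0^t\|\omega\|_{B^{s-1}_{\infty,1}}$, not by a lower norm. The missing point is to work with the single functional $E(t)=\|u\|_{L^\infty_t(B^s_{\infty,1})}+\|m\|_{L^\infty_t(B^{s-1}_{\infty,1})\cap L^1_t(B^{s+1}_{\infty,1})}$ and with estimates that are linear in it. For $\Gamma$ in $B^{s-1}_{\infty,1}$, use localized estimates with commutator bounded by $(\|\nabla u\|_{L^\infty}+\|\Gamma\|_{L^\infty})\|\Gamma\|_{B^{s-1}_{\infty,1}}+\|\Gamma\|_{L^\infty}\|m\|_{B^{s-1}_{\infty,1}}$. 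For the convection term, use the tame bound $\|u\cdot\nabla m\|_{B^{s-1}_{\infty,1}}\lesssim\|u\|_{L^\infty}\|m\|_{B^s_{\infty,1}}+\|m\|_{L^\infty}\|u\|_{B^s_{\infty,1}}$. This comes from Bony's decomposition with the remainder written as $\partial_j\mc R(u^j,m)$ thanks to $\div u=0$, so it also covers $s=1$. Interpolation and Young then absorb $\|m\|_{B^{s+1}_{\infty,1}}$ at the cost of $\|u\|_{L^\infty}^2$. Your bound $\|u\|_{B^s}\|m\|_{B^s}$ would instead put $\|u\|_{B^s}^2$ in the integrand and break the logarithmic argument. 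One obtains $E(t)\lesssim e^{Ct}\big(\mc N(0)+\int_0^t\mc I\,E\big)$ with $\mc I=1+\|\nabla u\|_{L^\infty}+\|u\|_{L^\infty}^2+\|\omega\|_{L^\infty}+\|m\|_{L^\infty}$. Every term of $\mc I$ except $\|\nabla u\|_{L^\infty}\lesssim K\log(e+E)$ is globally bounded, and Osgood concludes. Keep $\|\nabla u\|_{L^\infty}$ inside the time integral in this way. If you instead bound the $\Delta m$ contribution in the exponential form of the transport estimate by $e^{C\int_0^t\|\nabla u\|_{L^\infty}}\|m\|_{L^1_t(B^{s+1}_{\infty,1})}$, you get a bound like $\exp\big(C\int_0^te^{C\int_0^\tau\|\nabla u\|_{L^\infty}}\,\mathrm{d}\tau\big)$, which the logarithmic inequality closes only on a finite time interval. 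Finally, uniqueness needs no Osgood argument at $s=1$, because $B^1_{\infty,1}\hookrightarrow W^{1,\infty}$. The paper runs an $L^2$ energy estimate on the differences (Proposition~\ref{p:stab-energy}), justified by Lemma~\ref{l:energy}. The key is that $\omega_j,m_j\in L^{p_0}\cap L^\infty\subset L^2$ forces $\delta u\in\mc C([0,T];L^2)$ even though $u_j\notin L^2$. Your $B^0_{\infty,\infty}$ difference estimate is much heavier and is left unspecified.
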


Let us conclude this part with some remarks about the previous statement.

\begin{rmk} \label{r:p=inf}
By Besov embeddings, it follows from the assumptions of Theorem \ref{th:endpoint} that $\o_0$ and $m_0$ also belong to $L^\infty(\R^2)$. Next, 
it is well-known (see \tsl{e.g.} \cite{F-FD, Maj-Bert}) that, if $\o_0\in L^{p_0}(\R^2)\cap L^\infty(\R^2)$, for $p_0\in\,]1,2[\,$,
then the corresponding velocity field $u_0$, recasted from $\o_0$ by solving the Biot-Savart law (recalled in \eqref{eq:BS} below),
satisfies $u_0\in L^{q_0}(\R^2)\cap L^\infty(\R^2)$, where $q_0\in\,]2,+\infty[\,$ is given by the relation $1/q_0\,=\,1/p_0\,-\,1/2$.

It would not be difficult to replace the assumption
$\o_0,m_0\,\in\,L^{p_0}(\R^2)$ with the assumption $u_0,m_0\,\in\,L^2(\R^2)$, and get a similar well-posedness statement (global in the case $s>1$).
We refer to the end of Subsection \ref{ss:leb} for more comments in this respect.
%
\end{rmk}

\begin{rmk}\label{rmk:Yudovich}
Paper \cite{F-FD} proves gobal existence and uniqueness of Yudovich-type solutions for system \eqref{eq:2D-microp}. Roughly speaking, this correspond to the case
$s=1$ in Theorem \ref{th:endpoint}, but in the larger space $B^1_{\infty, \infty}$, instead of the critical one $B^1_{\infty, 1}$,
for the velocity field. In addition, that result fails to recover a bound on two derivatives for the variable $m$.

We refer to Subsection \ref{rmkCritical} for more details on the critical case $s=1$. 
\end{rmk}

\subsection*{Organisation of the paper}

After this introduction, we now give an overview of the rest of the paper.

In Section \ref{s:LP}, we begin by recalling the main ingredients from Littlewood-Paley theory that will be needed throughout the paper.
Section \ref{s:a-priori} is devoted to the derivation of the key a priori estimates underlying our analysis. These estimates play a central role in the paper and will later allow us, in Section \ref{s:uniqueness}, to establish a general stability result at the regularity level of Theorems \ref{th:global-strong} and \ref{th:endpoint},
together with uniqueness of solutions. 
We then turn to the question of existence. In Section \ref{s:existence}, we prove the local existence of strong solutions, while Section \ref{s:global} is devoted to showing that these solutions are in fact global whenever $s>1+2/p$.

\subsection*{Notation} 

Let us fix some notation which will be freely used throughout this paper.

Given a Banach space $\mf B$ over $\R^2$, we will adopt the same notation $\mf B(\R^2)$ for scalar, vector-valued and matrix valued functions.
Very often we will simply write $\mf B$, as no confusion can arise in this paper.
Typically, we will resort to the longer notation $\mf B(\R^2)$ when formulating the assumptions and the statements, and in
important centered formulas.

For an interval $I\subset \R$ and $\mf B$ as above,
we denote by $\mc C\big(I;\mf B\big)$ the space of continuous functions on $I$ with values in $\mf B$. For any $p\in[1,+\infty]$,
the symbol $L^p\big(I;\mf B\big)$ stands for the space of measurable functions on $I$ such that the map $t\mapsto \left\|f(t)\right\|_{\mf B}$ belongs to $L^p(I)$.
When $I=[0,T]$, we will often use the shorten notation $L^r_T(\mf B)\,=\,L^r\big([0,T];\mf B\big)$, whereas we will resort to the full notation
in statements and centered formulas.

In our estimates we will often avoid to write the explicit multiplicative constants which allow to pass from one line to the other. Thus, we will write
$A\,\lesssim\, B$ meaning that there exists a universal constant $C>0$, not depending on the solutions nor on the data (in the latter case, this will be pointed out),
such that $A\,\leq\,C\,B$. We will write $A\approx B$ if $A\lesssim B$ and $B\lesssim A$.



We define the operator $\nabla^\perp$ as $\nabla^\perp\,=\,\big(-\d_2,\d_1\big)$.
Finally, given a two-dimensional vector field $v\,=\,\big(v^1,v^2\big)$, we define its $\curl$ as
$\curl(v)\,=\,\d_1v^2\,-\,\d_2v^1$.

\section*{Acknowledgements}

{\small

This work has been partially supported 
by the Basque Government through the BERC 2022-2025 program and by the Spanish State Research Agency through the BCAM Severo Ochoa excellence accreditation
CEX2021-001142.

The first author also acknowledges the support of the European Union through the COFUND program [HORIZON-MSCA-2022-COFUND-101126600-SmartBRAIN3],
and of the the French National Research Agency (ANR) through the project CRISIS (ANR-20-CE40-0020-01).

M.E.M.M's work was partly funded by Chilean research grant Centro de Modelamiento Matem\'atico (CMM) BASAL fund FB210005 for center of excellence from ANID-Chile.
}

\section{Elements of Littlewood-Paley theory} \label{s:LP}

The present section is intended to be a functional toolbox, where we collect all the material which is needed in the course of the proof of the main results.
We start by recalling, in Subsection \ref{ss:LP-Sobolev}, some basic facts of Littlewood-Paley theory and
the definition of non-homogeneous Besov spaces. Subsection \ref{ss:para} is devoted to the presentation
of results from paradifferential calculus, namely paraproduct decomposition and estimates for composition of Besov functions.
Finally, in Subsection \ref{ss:tools-est} we recall some well-known estimates for smooth solutions to transport and transport-diffusion equations
in Besov spaces.

\subsection{Littlewood-Paley decomposition and Besov spaces} \label{ss:LP-Sobolev}

We recall here the main ideas of Littlewood-Paley theory in $\R^d$, where, for the sake of generality, $d\geq1$.
If not otherwise specified, we refer to Chapter 2 of \cite{BCD} for details.

Fix a smooth radial function $\chi$ supported in the ball $B(0,2)$, equal to $1$ in a neighborhood of $B(0,1)$
and such that $r\mapsto\chi(r\,e)$ is nonincreasing over $\R_+$ for all unitary vectors $e\in\R^d$. Set
$\varphi\left(\xi\right)=\chi\left(\xi\right)-\chi\left(2\xi\right)$ and
$\vphi_j(\xi):=\vphi(2^{-j}\xi)$ for all $j\geq0$.
The (non-homogeneous) dyadic blocks $(\Delta_j)_{j\in\Z}$ are defined by\footnote{Hereafter, we agree  that  $f(D)$ stands for 
the pseudo-differential operator $u\mapsto\mc{F}^{-1}[f(\xi)\,\what u(\xi)]$, where $\mc F u = \what u$ denotes the Fourier transform of $u$
and $\mc F^{-1}$ the inverse Fourier transform.} 
$$
\Delta_j\,:=\,0\quad\mbox{ if }\; j\leq-2,\qquad\Delta_{-1}\,:=\,\chi(D)\qquad\mbox{ and }\qquad
\Delta_j\,:=\,\varphi(2^{-j}D)\quad \mbox{ if }\;  j\geq0\,.
$$
We  also introduce the following low frequency cut-off operator:
\begin{equation} \label{eq:S_j}
S_ju\,:=\,\chi(2^{-j}D)\,=\,\sum_{k\leq j-1}\Delta_{k}\qquad\mbox{ for }\qquad j\geq0\,.
\end{equation}
Note that $S_j$ and $\Delta_j$ are convolution operators by $L^1$ kernels, whose norms are independent of the index $j$. Thus, they act continuously
from $L^p$ into itself, for any $p\in[1,+\infty]$.

Remark that the function $\chi$ can be chosen so that, for all $\xi\in\R^d$, one has the equality $\chi(\xi)+\sum_{j\geq0}\vphi_j(\xi)=1$.
Based on this partition of unity on the Fourier space, we obtain the so-called non-homogeneous \emph{Littlewood-Paley decomposition} of tempered distributions:
\[
\forall\,u\,\in\,\mc S'(\R^d)\,,\qquad\qquad\qquad u\,=\,\sum_{j\geq-1}\Delta_ju\qquad \mbox{ in the sense of }\quad \mc S'\,.
\]

Spectrally localised functions have very nice properties under the action of differentiation operators. These properties are known as
\emph{Bernstein inequalities}, which we now state. 

\begin{lemma} \label{l:bern}
Let  $0<r<R$.   A constant $C>0$ exists so that, for any integer $k\geq0$, any couple $(p,q)$ 
in $[1,+\infty]^2$, with  $p\leq q$,  and any function $u\in L^p$, for all $\lambda>0$ we have:
\begin{align*}
{\supp}\, \widehat u\, \subset\,   B(0,\lambda R)\,=\,\big\{\xi\in\R^d\,\big|\,|\xi|\leq\lambda R \big\}\qquad
\Longrightarrow\qquad
\|\nabla^k u\|_{L^q}\, \leq\,
 C^{k+1}\,\lambda^{k+d\left(\frac{1}{p}-\frac{1}{q}\right)}\,\|u\|_{L^p}\,, \\[1ex]
{\supp}\, \widehat u   \, \subset\, \big\{\xi\in\R^d\,\big|\, r\lambda\leq|\xi|\leq R\lambda\big\}
\quad\Longrightarrow\quad C^{-k-1}\,\lambda^k\|u\|_{L^p}\,\leq\,
\|\nabla^k u\|_{L^p}\,
\leq\,C^{k+1} \, \lambda^k\|u\|_{L^p}\,.
\end{align*}
\end{lemma}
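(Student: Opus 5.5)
The argument is pure Fourier analysis: we rescale to $\lambda=1$, write $u$ (or $\nabla^k u$) as a convolution with a fixed Schwartz kernel, and then use Young's inequality.

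\emph{Scaling reduction.} If $\supp\,\what u\subset B(0,\lambda R)$, set $u_\lambda(x):=u(x/\lambda)$. Then $\what{u_\lambda}$ is supported in $B(0,R)$. We also have $\|u_\lambda\|_{L^p}=\lambda^{d/p}\|u\|_{L^p}$ and $\nabla^k u_\lambda=\lambda^{-k}(\nabla^k u)(\cdot/\lambda)$. Plugging these identities into the $\lambda=1$ inequality produces exactly the factor $\lambda^{k+d(1/p-1/q)}$. The annulus case rescales in the same way, so it suffices to treat $\lambda=1$.

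\emph{First inequality.} Fix $\theta\in\mc S(\R^d)$ with $\what\theta\equiv1$ on $B(0,R)$; this choice depends only on $R$. Then $u=\theta*u$, hence $\d^\alpha u=(\d^\alpha\theta)*u$ for $|\alpha|=k$. Young's inequality with $1+1/q=1/r+1/p$ gives
\[
\|\d^\alpha u\|_{L^q}\leq\|\d^\alpha\theta\|_{L^r}\,\|u\|_{L^p}.
\]
To bound $\|\d^\alpha\theta\|_{L^r}\le\|\d^\alpha\theta\|_{L^1}^{1/r}\|\d^\alpha\theta\|_{L^\infty}^{1-1/r}$ by $C^{k+1}$ uniformly in $\alpha$, the crude estimate $\int|\xi^\alpha\what\theta|$ is too weak: it only controls the $L^\infty$ norm. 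I would use $(1+|x|^2)^d\,\d^\alpha\theta=\mc F^{-1}\big[(1-\Delta_\xi)^d(\xi^\alpha\what\theta)\big]$. Since $\what\theta$ is supported in a fixed ball $B(0,R')$, every $\xi$-derivative of $\xi^\alpha$ there is bounded by $k^{2d}(R')^k$. This yields $\|\d^\alpha\theta\|_{L^1}+\|\d^\alpha\theta\|_{L^\infty}\le C\,k^{2d}(R')^{k}\le C^{k+1}$. Summing over the $d^k$ multi-indices only enlarges $C$.

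\emph{Second inequality.} The upper bound is the case $p=q$ of the first. For the lower bound, take $\psi\in\mc D(\R^d\setminus\{0\})$ with $\psi\equiv1$ on the annulus $\{r\le|\xi|\le R\}$. Since $|\xi|^{2k}=\sum_{|\alpha|=k}\frac{k!}{\alpha!}\xi^{2\alpha}$, we can write
\[
\what u=\sum_{|\alpha|=k}\frac{k!}{\alpha!}\,\frac{(-i\xi)^\alpha\psi(\xi)}{|\xi|^{2k}}\,\widehat{\d^\alpha u}\,\cdot(\text{unimodular sign}).
\]
Hence $u=\sum_{|\alpha|=k}g_\alpha*\d^\alpha u$, with $\what{g_\alpha}=c_\alpha\,\xi^\alpha\psi(\xi)|\xi|^{-2k}$, and Young gives $\|u\|_{L^p}\le\sum_\alpha\|g_\alpha\|_{L^1}\|\d^\alpha u\|_{L^p}$.

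\emph{Main obstacle.} The hard part is showing that $\|g_\alpha\|_{L^1}$, after summing the multinomial weights, grows at most geometrically. Here $|\xi|^{-2k}$ is smooth on $\supp\psi$, and its derivatives up to order $2d$ are bounded by $C\,k^{2d}r^{-2k}$ times powers of $R$. Applying the same $(1-\Delta_\xi)^d$ trick, the multinomial sum $\sum_\alpha k!/\alpha!=d^k$ is absorbed into $C^{k+1}$. This gives $\|u\|_{L^p}\le C^{k+1}\|\nabla^k u\|_{L^p}$, which is the claimed lower bound.
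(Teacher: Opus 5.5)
Your proof is correct. It is essentially the standard argument of Lemma 2.1 in \cite{BCD}, which the paper quotes from Chapter 2 of \cite{BCD} without giving its own proof: dilation to $\lambda=1$, the identity $\d^\alpha u=\d^\alpha\theta*u$ combined with Young's inequality, the weight $(1+|x|^2)^d$ obtained from $(1-\Delta_\xi)^d$ to get a geometric bound on the kernels, and the multinomial identity $|\xi|^{2k}=\sum_{|\alpha|=k}\frac{k!}{\alpha!}\xi^{2\alpha}$ for the reverse inequality. The only point to tidy is that you should take $\what\theta$ compactly supported (for instance in $\mc D(B(0,2R))$) from the outset, since you later use that its support lies in a fixed ball.
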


After these preliminaries, we can now define the class of Besov spaces.
\begin{defin} \label{d:B}
  Let $s\in\R$ and $1\leq p,r\leq+\infty$. The \emph{non-homogeneous Besov space}
$B^{s}_{p,r}\,=\,B^s_{p,r}(\R^d)$ is defined as the set of tempered distributions $u\in \mc S'(\R^d)$ for which
$$
\|u\|_{B^{s}_{p,r}}\,:=\,
\left\|\left(2^{js}\,\|\Delta_ju\|_{L^p}\right)_{j\geq -1}\right\|_{\ell^r}\,<\,+\infty\,.
$$
\end{defin}
Besov spaces are interpolation spaces between the more classical Sobolev spaces. In fact, for any $k\in\N$ and~$p\in[1,+\infty]$,
we have the following chain of continuous embeddings:
\begin{equation}\label{eq:BesovLp}
B^k_{p,1}\hookrightarrow W^{k,p}\hookrightarrow B^k_{p,\infty}\,,
\end{equation}
where  $W^{k,p}$ stands for the classical Sobolev space of $L^p$ functions with all the derivatives up to the order $k$ in $L^p$.
As a matter of fact, when $1<p<+\infty$, one has
$B^k_{p, \min (p, 2)}\hookrightarrow W^{k,p}\hookrightarrow B^k_{p, \max(p, 2)}$ (see Theorems 2.40 and 2.41 in \cite{BCD}).
In particular, for all $s\in\R$, we deduce the equivalence $B^s_{2,2}\equiv H^s$, with equivalence of norms. 
In addition, for any $k\in\N$ and any $\veps\in\,]0,1[\,$, one has the equivalence $B^{k+\veps}_{\infty,\infty}\equiv \mc C^{k,\veps}$
with the classical H\"older spaces, with equivalence of norms.

As an immediate consequence of the first Bernstein inequality, one gets the following embedding result.
\begin{prop}\label{p:embed}
The space $B^{s_1}_{p_1,r_1}$ is continuously embedded in the space $B^{s_2}_{p_2,r_2}$ for all indices satisfying $p_1\,\leq\,p_2$ and
$$
s_2\,<\,s_1-d\left(\frac{1}{p_1}-\frac{1}{p_2}\right)\qquad\qquad\mbox{ or }\qquad\qquad
s_2\,=\,s_1-d\left(\frac{1}{p_1}-\frac{1}{p_2}\right)\;\;\mbox{ and }\;\;r_1\,\leq\,r_2\,. 
$$
\end{prop}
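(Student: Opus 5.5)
The plan is to derive Proposition~\ref{p:embed} directly from the first Bernstein inequality of Lemma~\ref{l:bern}, working block by block on the Littlewood--Paley decomposition. Fix $u\in B^{s_1}_{p_1,r_1}$ and $j\geq-1$. For $j\geq0$ the block $\Delta_ju$ has Fourier transform supported in the annulus $\{2^{j}\,3/4\leq|\xi|\leq 2^{j}\,8/3\}$, hence in a ball of radius $R\,2^j$. For $j=-1$ it is supported in the ball $B(0,2)$. Applying Lemma~\ref{l:bern} with $k=0$, $\lambda=2^j$ and $p_1\leq p_2$ gives, uniformly in $j$,
\[
\|\Delta_ju\|_{L^{p_2}}\,\leq\,C\,2^{jd\left(\frac{1}{p_1}-\frac{1}{p_2}\right)}\,\|\Delta_ju\|_{L^{p_1}}\,.
\]
Multiplying by $2^{js_2}$ then yields
\[
2^{js_2}\,\|\Delta_ju\|_{L^{p_2}}\,\leq\,C\,2^{j(s_2-\sigma)}\,2^{js_1}\,\|\Delta_ju\|_{L^{p_1}}\,,
\qquad \mbox{ where }\quad \sigma\,:=\,s_1-d\left(\frac{1}{p_1}-\frac{1}{p_2}\right)\,.
\]

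\emph{Case $s_2=\sigma$ and $r_1\leq r_2$.} Here the factor $2^{j(s_2-\sigma)}$ equals $1$. Taking the $\ell^{r_2}$ norm and using the elementary embedding $\ell^{r_1}\hookrightarrow\ell^{r_2}$, valid with constant $1$ for $r_1\leq r_2$, gives
\[
\|u\|_{B^{s_2}_{p_2,r_2}}\,\leq\,C\,\|u\|_{B^{s_1}_{p_1,r_1}}\,.
\]

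\emph{Case $s_2<\sigma$, with $r_1,r_2$ arbitrary.} Bound each term by $2^{js_1}\|\Delta_ju\|_{L^{p_1}}\leq\|u\|_{B^{s_1}_{p_1,\infty}}\leq\|u\|_{B^{s_1}_{p_1,r_1}}$. Then
\[
\|u\|_{B^{s_2}_{p_2,r_2}}\,\leq\,C\,\big\|\big(2^{j(s_2-\sigma)}\big)_{j\geq-1}\big\|_{\ell^{r_2}}\;\|u\|_{B^{s_1}_{p_1,r_1}}\,,
\]
and the geometric sequence $\big(2^{j(s_2-\sigma)}\big)_j$ belongs to every $\ell^{r_2}$, since $s_2-\sigma<0$.

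There is no real obstacle in this argument. The only points needing a little care are the following.
\begin{itemize}
\item The block $j=-1$ is supported in a ball rather than an annulus; this is harmless, because the first Bernstein inequality only requires a ball.
\item The Bernstein constant must be uniform in $j$. This holds because Lemma~\ref{l:bern} is scale invariant in $\lambda$.
\end{itemize}
The embedding is understood in $\mc S'$; the norm inequalities above give continuity directly.
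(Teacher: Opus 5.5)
Your argument is correct and is exactly the one the paper intends: the paper gives no separate proof, states the embedding as an immediate consequence of the first Bernstein inequality applied to each dyadic block, and refers to Chapter 2 of \cite{BCD}; your two cases $s_2=\sigma$ (via $\ell^{r_1}\hookrightarrow\ell^{r_2}$) and $s_2<\sigma$ (via $B^{s_1}_{p_1,r_1}\hookrightarrow B^{s_1}_{p_1,\infty}$ and summability of $2^{j(s_2-\sigma)}$) fill in that argument. One cosmetic slip: the annulus $\{\tfrac34\,2^j\le|\xi|\le\tfrac83\,2^j\}$ comes from the normalisation in \cite{BCD}, not from the paper's choice of $\chi$, but this is irrelevant since you only use that $\widehat{\Delta_ju}$ is supported in a ball $B(0,R\,2^j)$ with $R$ independent of $j$.
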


We underline here that, in particular, we get the following property: for any triplet $(s, p, r) \in \mathbb{R} \times [1, +\infty]^2$
such that 
\begin{equation} \label{eq:AnnLInfty}
s > \frac{d}{p} \qquad\qquad \text{ or } \qquad\qquad s = \frac{d}{p}\quad \text{ and }\quad r = 1\,,
\end{equation}
one has the chain of continuous embeddings 
\begin{equation*}
B^s_{p,r}(\R^d) \hookrightarrow B^{s - \frac{d}{p}}_{\infty, r}(\R^d) \hookrightarrow B^0_{\infty, 1}(\R^d) \hookrightarrow L^\infty(\R^d)\,.
\end{equation*}
Similarly, if \eqref{eq:AnnLInfty} is reinforced to the condition
\begin{equation} \label{cond:Lipschitz}
s\,>\,1\,+\,\frac{d}{p}\qquad\qquad\quad \mbox{ or }\qquad\qquad\quad s\,=\,1\,+\,\frac{d}{p}\quad \mbox{ and }\quad r\,=\,1\,,
\end{equation}
one has the embedding
\[
 B^s_{p,r}(\R^d)\,\hookrightarrow\,W^{1,\infty}(\R^d)\,,
\]
which plays a key role in our study (keep in mind the discussion in the Introduction).

To conclude this part, we recall a useful inequality of Gagliardo-Nirenberg type.
It can be shown by means of Littlewood-Paley decomposition: its proof consists in cutting a tempered distribution into low and high frequencies,
estimating them differently by using the two Bernstein inequalities and, finally, optimising the frequency size $N$ at which realising the cut.
Since this argument is quite standard, we omit the details of the proof.

\begin{lemma} \label{l:GN-ineq} 
For any function $ f \in \mathcal{S}(\R^d) $, for any $p\in\,]1,+\infty[\,$ the following Gagliardo-Nirenberg type inequality holds true:
 \[
\| f \|_{L^\infty}\,\lesssim\,\| f\|_{L^p}^{\alpha}\,\|\nabla f \|^{1-\alpha}_{L^\infty}\,,
\]
where the exponent $ \alpha\,=\,\alpha(p,d)\; \in \,]0,1[\, $ is defined by $\alpha = p/(p+d)$, whence $1-\alpha = d/(p+d)$.

The above inequalities extend also to vector-valued functions. In addition, for any vector field $ u \in \mathcal{S}(\R^d)  $ such that $ \div(u) = 0 $, one has
\[
\|u\|_{L^\infty}\,\lesssim\,\left\|u\right\|_{L^p}^{\alpha}\,\left\|\Omega\right\|_{L^{\infty}}^{1-\alpha}\,,
\]
 where  $ \Omega\,:=\,Du\,-\,^t\nabla u $ denotes the vorticity matrix of $ u $ and $ \alpha \in \,]0,1[\, $ is as above.
\end{lemma}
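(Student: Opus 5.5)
The plan is to split $f$ into low and high frequencies, bound each piece in $L^\infty$ with the Bernstein inequalities of Lemma \ref{l:bern}, and then choose the cut-off frequency to balance the two bounds. For $N>0$ a dyadic parameter to be fixed later, we write
\[
f\,=\,\chi(N^{-1}D)f\,+\,\big(1-\chi(N^{-1}D)\big)f\,.
\]

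\emph{Low frequencies.} The first piece is spectrally supported in a ball of radius $\approx N$, and $\chi(N^{-1}D)$ is bounded on $L^p$ uniformly in $N$. The first Bernstein inequality then gives
\[
\big\|\chi(N^{-1}D)f\big\|_{L^\infty}\,\lesssim\,N^{d/p}\,\|f\|_{L^p}\,.
\]

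\emph{High frequencies.} We decompose $\big(1-\chi(N^{-1}D)\big)f=\sum_{2^j\gtrsim N}\Delta_j f$. Each block is supported in an annulus of size $2^j$, so the reverse Bernstein inequality yields $\|\Delta_jf\|_{L^\infty}\lesssim 2^{-j}\|\nabla\Delta_j f\|_{L^\infty}\lesssim 2^{-j}\|\nabla f\|_{L^\infty}$, where the last step uses that $\Delta_j$ is convolution with an $L^1$ kernel of uniformly bounded norm. Summing the geometric series gives a bound $\lesssim N^{-1}\|\nabla f\|_{L^\infty}$. Identifying the tail with the sum of the $\Delta_j$ at the level of $L^\infty$ convergence is justified because $f\in\mc S$.

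\emph{Optimisation.} Combining the two pieces,
\[
\|f\|_{L^\infty}\,\lesssim\,N^{d/p}\|f\|_{L^p}+N^{-1}\|\nabla f\|_{L^\infty}\,.
\]
We take $N^{1+d/p}\approx\|\nabla f\|_{L^\infty}/\|f\|_{L^p}$, rounded to a power of $2$ (this affects only constants). Both terms then equal $\|f\|_{L^p}^{p/(p+d)}\|\nabla f\|_{L^\infty}^{d/(p+d)}$, which is the claimed inequality with $\alpha=p/(p+d)$. If the optimal $N$ falls below $1$, we can still use the homogeneous cut-off at that scale, because $\chi(N^{-1}D)$ makes sense for any $N>0$. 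The degenerate case $\nabla f\equiv0$ forces $f=0$ for Schwartz $f$, so it is trivial. The vector-valued extension follows componentwise.

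\emph{Divergence-free case.} This step is the only non-routine one. We repeat the low-frequency estimate unchanged. For the high-frequency blocks we need $\|\nabla\Delta_j u\|_{L^\infty}\lesssim\|\Omega\|_{L^\infty}$ for $j\geq0$. From $\div u=0$ we get the identity $\Delta u^i=\sum_k\d_k\Omega_{ik}$, up to the sign convention for $\Omega$. Hence $\nabla\Delta_j u=\nabla(-\Delta)^{-1}\div\,\Delta_j\Omega$, and on the annulus of size $2^j$ the operator $\nabla(-\Delta)^{-1}\d_k\,\Delta_j$ is a Fourier multiplier of order zero, smooth and compactly supported after rescaling. Its kernel is therefore in $L^1$ with norm independent of $j$. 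This gives $\|\Delta_ju\|_{L^\infty}\lesssim2^{-j}\|\Omega\|_{L^\infty}$, and from there the summation and the optimisation in $N$ are exactly as before.
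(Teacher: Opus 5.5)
Your proof is correct and follows the route the paper sketches but does not write out: cut at frequency $N$, bound the low part by $N^{d/p}\|f\|_{L^p}$ with the first Bernstein inequality and the high part by $N^{-1}\|\nabla f\|_{L^\infty}$ with the reverse one, then optimise with $N^{1+d/p}\approx\|\nabla f\|_{L^\infty}/\|f\|_{L^p}$. For the divergence-free case you correctly supply the omitted details, namely the identity $\Delta u=\div\Omega$ (up to the convention for $\Omega$), the uniform $L^1$ bound on the kernels of the order-zero annular multipliers $\nabla(-\Delta)^{-1}\d_k\Delta_j$, and the use of homogeneous blocks when the optimal $N$ is below $1$.
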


\subsection{Paradifferential calculus} \label{ss:para}

We now apply Littlewood-Paley decomposition to state some useful results from paradifferential calculus. Again, we refer to Chapter 2
of \cite{BCD} for details and further results.

To begin with, let us introduce the \emph{paraproduct operator} (after J.-M. Bony \cite{Bony}) and paraproduct decomposition.
For this, we observe that, 
formally, we can decompose any product of two tempered distributions $u$ and $v$ into the sum
\begin{equation}\label{eq:bony}
u\,v\;=\;\mathcal{T}_uv\,+\,\mathcal{T}_vu\,+\,\mathcal{R}(u,v)\,,
\end{equation}
where we have defined the paraproduct operator $\mc T$ and the reminder operator $\mc R$ as
$$
\mathcal{T}_uv\,:=\,\sum_jS_{j-1}u\,\Delta_j v,\qquad\qquad\mbox{ and }\qquad\qquad
\mathcal{R}(u,v)\,:=\,\sum_j\sum_{|k-j|\leq1}\Delta_j u\,\Delta_{k}v\,.
$$

The paraproduct and remainder operators have many nice continuity properties over Besov spaces. We collect them in the next statement.
\begin{prop}\label{p:op}
For any $(s,p,r)\in\R\times[1,+\infty]^2$ and $\s>0$, the paraproduct operator 
$\mathcal{T}$ maps continuously $L^\infty\times B^s_{p,r}$ in $B^s_{p,r}$ and  $B^{-\s}_{\infty,\infty}\times B^s_{p,r}$ in $B^{s-\s}_{p,r}$.
Moreover, the following estimates hold:
\[
\|\mathcal{T}_uv\|_{B^s_{p,r}}\,\lesssim\,\|u\|_{L^\infty}\,\|\mc P(D) v\|_{B^{s-1}_{p,r}}\qquad \mbox{ and }\qquad
\|\mathcal{T}_uv\|_{B^{s-\s}_{p,r}}\,\lesssim\,\|u\|_{B^{-\s}_{\infty,\infty}}\,\|\mc P(D) v\|_{B^{s-1}_{p,r}}\,,
\]
where $\mc P(D)$ is a Fourier multiplier\footnote{For instance, one may have $\mc P(D) = \nabla$, or $\mc P(D) = \curl$.} of order $1$.

For any $(s_1,p_1,r_1)$ and $(s_2,p_2,r_2)$ in $\R\times[1,+\infty]^2$ such that 
$s_1+s_2>0$, $1/p:=1/p_1+1/p_2\leq1$ and~$1/r:=1/r_1+1/r_2\leq1$,
the remainder operator $\mathcal{R}$ maps continuously~$B^{s_1}_{p_1,r_1}\times B^{s_2}_{p_2,r_2}$ into~$B^{s_1+s_2}_{p,r}$.
In the case $s_1+s_2=0$, if in addition $r=1$, the operator $\mathcal{R}$ is continuous from $B^{s_1}_{p_1,r_1}\times B^{s_2}_{p_2,r_2}$ with values
in $B^{0}_{p,\infty}$.
\end{prop}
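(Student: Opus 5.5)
The plan is to prove the statement blockwise: decompose the product into paraproducts and remainder, write each piece as a sum over dyadic blocks, and use the spectral localisation of these blocks together with the Bernstein inequalities of Lemma~\ref{l:bern}. For the paraproduct, each term $S_{j-1}u\,\Delta_j v$ has Fourier support in an annulus $2^j\mc C$ for some fixed annulus $\mc C$. Hence, for any $k$, the block $\Delta_k\mathcal T_uv$ only involves indices $j$ with $|j-k|\leq N_0$, where $N_0$ is a fixed integer. In the first case we estimate
\[
\|S_{j-1}u\,\Delta_jv\|_{L^p}\,\leq\,\|S_{j-1}u\|_{L^\infty}\,\|\Delta_jv\|_{L^p}\,\lesssim\,\|u\|_{L^\infty}\,\|\Delta_jv\|_{L^p}\,,
\]
because $S_{j-1}$ is convolution by a kernel whose $L^1$ norm does not depend on $j$. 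For $j\geq0$ the reverse Bernstein inequality (second part of Lemma~\ref{l:bern}), adapted to an order-one multiplier $\mc P(D)$ which is elliptic on annuli (such as $\nabla$), gives $\|\Delta_jv\|_{L^p}\lesssim 2^{-j}\|\Delta_j\mc P(D)v\|_{L^p}$. Multiplying by $2^{ks}\approx 2^{js}$ and taking the $\ell^r$ norm then yields the first estimate.

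For the low frequency $j=-1$ we have $S_{-2}=0$, and more generally the finitely many low blocks contribute only through $\Delta_{-1}v$ and $\Delta_0 v$. If $\mc P(D)$ is not invertible at zero frequency, these low-frequency terms need a separate comment. I would handle this by noting that in $\mathcal T_uv=\sum_j S_{j-1}u\,\Delta_jv$ the terms with $j\leq 0$ vanish, since $S_{j-1}=0$ for $j\leq0$ under the convention that $\Delta_k=0$ for $k\leq -2$. So only annular blocks of $v$ appear, and there $\mc P(D)$ is invertible. This is the point I expect to need the most care; otherwise the argument is routine.

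For the second paraproduct estimate, I replace the $L^\infty$ bound by
\[
\|S_{j-1}u\|_{L^\infty}\,\leq\,\sum_{k\leq j-2}\|\Delta_ku\|_{L^\infty}\,\lesssim\,\sum_{k\leq j-2}2^{k\s}\,\|u\|_{B^{-\s}_{\infty,\infty}}\,\lesssim\,2^{j\s}\,\|u\|_{B^{-\s}_{\infty,\infty}}\,.
\]
Summing this geometric series uses $\s>0$ in an essential way. The same block argument then gives the $B^{s-\s}_{p,r}$ bound.

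For the remainder, the term $\Delta_ju\,\wtilde\Delta_jv$, with $\wtilde\Delta_j=\sum_{|k-j|\leq1}\Delta_k$, has Fourier support in a ball $2^jB$. Therefore $\Delta_k\mathcal R(u,v)$ only involves indices $j\geq k-N_0$. By H\"older's inequality with $1/p=1/p_1+1/p_2$,
\[
2^{k(s_1+s_2)}\,\|\Delta_k\mathcal R(u,v)\|_{L^p}\,\lesssim\,\sum_{j\geq k-N_0}2^{(k-j)(s_1+s_2)}\,c_j\,d_j\,,
\]
where $c_j=2^{js_1}\|\Delta_ju\|_{L^{p_1}}$ and $d_j=2^{js_2}\|\wtilde\Delta_jv\|_{L^{p_2}}$. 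When $s_1+s_2>0$, Young's inequality for series, combined with H\"older in $\ell^{r_1}\times\ell^{r_2}$ and the embedding $\ell^r$ of the product sequence, gives the $B^{s_1+s_2}_{p,r}$ bound. When $s_1+s_2=0$ and $r=1$, we use instead the bound $\sup_k\sum_j c_jd_j\leq\|c\|_{\ell^{r_1}}\|d\|_{\ell^{r_2}}$, which gives the $B^0_{p,\infty}$ estimate.
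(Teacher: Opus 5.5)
Your proof is correct and follows the standard argument of \cite{BCD} (Theorems 2.82 and 2.85 there), to which the paper simply defers for this proposition. The steps match: annular localisation of $S_{j-1}u\,\Delta_jv$ with the terms $j\leq 0$ vanishing, reverse Bernstein to pass from $\Delta_jv$ to $\Delta_j\mc P(D)v$, the geometric sum $\sum_{k\leq j-2}2^{k\s}\lesssim 2^{j\s}$ for $\s>0$, and ball localisation plus H\"older/Young on sequences for $\mathcal R$. One point deserves to be made explicit: the reverse Bernstein step is where the hypothesis on $\mc P(D)$ enters. It holds for $\nabla$ (or any multiplier elliptic on annuli), but for $\mc P(D)=\curl$ it requires $\div v=0$, via $\Delta_jv=-\nabla^\perp(-\Delta)^{-1}\Delta_j\curl v$; this is exactly the setting in which the paper uses it.
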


The consequence of this proposition is that, when $s>0$, the product acts continuously on the spaces $B^s_{p,r}\cap L^\infty$;
in particular, the product is continuous from the space $B^s_{p,r}\times B^s_{p,r}$ into $B^s_{p,r}$
as long as condition \eqref{eq:AnnLInfty} holds with $s > 0$.
Moreover, in that case, we have the so-called \emph{tame estimates}.

\begin{cor}\label{c:tame}
Let $(s,p, r)\in\R\times[1,+\infty]^2$ be such that that $s > 0$. Then, for any $f$ and $g$ belonging to $L^\infty\cap B^s_{p,r}$, the product
$fg$ also belongs to that space and we have
\begin{equation}
\label{alg:prop:2}
\left\| f\,g \right\|_{B^s_{p,r}}\, \lesssim \,\| f \|_{L^\infty}\, \|g\|_{B^s_{p,r}}\, +\, \| f \|_{B^s_{p,r}} \,\| g \|_{L^\infty}\,.
\end{equation}
\end{cor}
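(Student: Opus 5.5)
The plan is to use Bony's decomposition \eqref{eq:bony}, writing $fg=\mathcal T_f g+\mathcal T_g f+\mathcal R(f,g)$, and to bound each of the three pieces in $B^s_{p,r}$ by means of Proposition \ref{p:op}.

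\emph{Paraproducts.} By the first statement of Proposition \ref{p:op}, $\mathcal T$ maps $L^\infty\times B^s_{p,r}$ continuously into $B^s_{p,r}$. It is convenient to check this directly at the level of the blocks. Each term $S_{j-1}f\,\Delta_jg$ has spectrum in an annulus of size $2^j$. Moreover $\|S_{j-1}f\|_{L^\infty}\lesssim\|f\|_{L^\infty}$, because $S_{j-1}$ is convolution with an $L^1$ kernel of uniformly bounded norm. Only finitely many $j$ with $|j-k|\le 4$ contribute to $\Delta_k\mathcal T_fg$, so
\[
2^{ks}\|\Delta_k\mathcal T_fg\|_{L^p}\lesssim\|f\|_{L^\infty}\sum_{|j-k|\le4}2^{js}\|\Delta_jg\|_{L^p}\,.
\]
Taking the $\ell^r$ norm gives $\|\mathcal T_fg\|_{B^s_{p,r}}\lesssim\|f\|_{L^\infty}\|g\|_{B^s_{p,r}}$. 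Exchanging the roles of $f$ and $g$ yields $\|\mathcal T_gf\|_{B^s_{p,r}}\lesssim\|g\|_{L^\infty}\|f\|_{B^s_{p,r}}$. No sign condition on $s$ is needed for these two terms.

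\emph{Remainder.} This is the only step where $s>0$ is used. Apply the second part of Proposition \ref{p:op} with $(s_1,p_1,r_1)=(0,\infty,\infty)$ and $(s_2,p_2,r_2)=(s,p,r)$. The hypotheses hold: $s_1+s_2=s>0$, $1/p_1+1/p_2=1/p\le1$, and $1/r_1+1/r_2=1/r\le1$. This gives $\|\mathcal R(f,g)\|_{B^s_{p,r}}\lesssim\|f\|_{B^0_{\infty,\infty}}\|g\|_{B^s_{p,r}}$. Since $\sup_j\|\Delta_jf\|_{L^\infty}\lesssim\|f\|_{L^\infty}$, we have $L^\infty\hookrightarrow B^0_{\infty,\infty}$, and hence $\|\mathcal R(f,g)\|_{B^s_{p,r}}\lesssim\|f\|_{L^\infty}\|g\|_{B^s_{p,r}}$.

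Behind the proposition, the mechanism is that $\Delta_k\mathcal R(f,g)$ only involves indices $j\ge k-3$, where $\Delta_jf\,\widetilde\Delta_jg$ is supported in a ball of size $2^j$. The resulting bound is $2^{ks}\|\Delta_k\mathcal R\|_{L^p}\lesssim\|f\|_{L^\infty}\sum_{j\ge k-3}2^{(k-j)s}\,2^{js}\|\widetilde\Delta_jg\|_{L^p}$. This is a convolution with the sequence $2^{-ms}\mathbf 1_{m\ge -3}$, which is in $\ell^1$ exactly because $s>0$, so Young's inequality in $\ell^r$ concludes.

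Summing the three bounds gives \eqref{alg:prop:2}, and in particular $fg\in B^s_{p,r}$. That $fg\in L^\infty$ is trivial. The main (and only mild) obstacle is the remainder, which is exactly where positivity of $s$ enters through summability of the geometric series.
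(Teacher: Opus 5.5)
Your proposal is correct and follows essentially the same route as the paper, which obtains the corollary directly from Bony's decomposition \eqref{eq:bony} and Proposition \ref{p:op}. The two paraproducts are handled by the $L^\infty\times B^s_{p,r}\to B^s_{p,r}$ continuity, and the remainder by the case $(s_1,p_1,r_1)=(0,\infty,\infty)$ together with $L^\infty\hookrightarrow B^0_{\infty,\infty}$, which is exactly where $s>0$ enters; your block-level checks simply unpack the proof of Proposition \ref{p:op} itself.
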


We stress the fact that the product does \emph{not} act continuously on the spaces $B^0_{p,r}$, as the continuity of the remainder operator $\mc R(f,g)$ with
values in $B^0_{p,r}$ fails in this case. 

\medbreak
Finally, we need a few commutator estimates in our study. Recall that, given two operators $A$ and $B$, we denote by $[A,B]\,:=\,AB-BA$ their commutator operator.
The first commutator estimate is classical and corresponds to Lemma 2.100 of \cite{BCD} (see also Remark 2.101 therein),
to which we also add a small variant of it (see \cite{Cobb-F} for details).
\begin{lemma}\label{l:CommBCD}
Let $s>0$ and $(s,p,r)\in\R\times[1,+\infty]^2$ such that condition \eqref{cond:Lipschitz} is satisfied. 
Let $v$ be a vector field on $\R^d$, belonging to $B^s_{p,r}$. 
Then we have
\begin{equation*}
\forall\, f \in \B\,, \qquad\qquad  2^{js}\left\| \big[ v \cdot \nabla, \Delta_j \big] f  \right\|_{L^p}\, \lesssim\, c_j\,
\Big( \|\nabla v \|_{L^\infty} \| f \|_{\B} + \|\nabla v \|_{B^{s-1}_{p, r}} \|\nabla f \|_{L^\infty} \Big)\,,
\end{equation*}
and also
\begin{equation*} 
\forall\, f \in B^{s-1}_{p, r}\,, \qquad
2^{j(s-1)} \left\| \big[ v \cdot \nabla, \Delta_j \big] f  \right\|_{L^p}\, \lesssim\, c_j\, \Big( \|\nabla v \|_{L^\infty} \| f \|_{B^{s-1}_{p, r}} +
\|\mc P(D) v \|_{B^{s-1}_{p, r}} \| f \|_{L^\infty} \Big)\,,
\end{equation*}
where the $\big(c_j\big)_{j\geq -1}$ are (possibly distinct) sequences in the unit ball of $\ell^r$ and where, in the second
inequality, $\mc P(D)$ is a Fourier multiplier of order $1$.
\end{lemma}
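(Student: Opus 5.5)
The plan is to follow the classical route of Lemma 2.100 in \cite{BCD}: decompose the commutator with Bony's decomposition \eqref{eq:bony} and estimate each piece with Bernstein's inequalities (Lemma \ref{l:bern}) and the continuity properties of Proposition \ref{p:op}. Using summation over repeated indices $k=1,\dots,d$, I would write
\[
\big[v\cdot\nabla,\Delta_j\big]f\,=\,\big[\mc T_{v^k},\Delta_j\big]\d_kf\,+\,\mc T_{\d_k\Delta_jf}v^k\,-\,\Delta_j\mc T_{\d_kf}v^k\,+\,\mc R\big(v^k,\d_k\Delta_jf\big)\,-\,\Delta_j\mc R\big(v^k,\d_kf\big)\,.
\]

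\textbf{The paraproduct commutator.} By spectral localisation, only indices $|q-j|\leq 4$ contribute to $[\mc T_{v^k},\Delta_j]\d_kf=\sum_q[S_{q-1}v^k,\Delta_j]\d_k\Delta_qf$. Writing $\Delta_j$ as convolution with $2^{jd}h(2^j\cdot)$ and applying the mean value theorem gives
\[
\big\|[S_{q-1}v^k,\Delta_j]\d_k\Delta_qf\big\|_{L^p}\,\lesssim\,2^{-j}\,\|\nabla S_{q-1}v\|_{L^\infty}\,\|\d_k\Delta_qf\|_{L^p}\,\lesssim\,\|\nabla v\|_{L^\infty}\,\|\Delta_qf\|_{L^p}\,.
\]
This yields the term $c_j2^{-js}\|\nabla v\|_{L^\infty}\|f\|_{B^s_{p,r}}$. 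The same computation works at regularity $s-1$.

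\textbf{The term $\mc T_{\d_k\Delta_jf}v^k$.} Only blocks $\Delta_qv$ with $q\geq j+1$ appear. Each is bounded by $\|\d_k\Delta_jf\|_{L^\infty}\,2^{-q}\|\Delta_q\nabla v\|_{L^p}$. Since $s>0$ (indeed $s-1>-1$ for the second estimate, thanks to the extra $2^{-q}$), the sum over $q\geq j$ converges in $\ell^r$ after multiplication by $2^{js}$ or $2^{j(s-1)}$, and gives $\|\nabla v\|_{B^{s-1}_{p,r}}\|\nabla f\|_{L^\infty}$ or $\|\nabla v\|_{L^\infty}\|f\|_{B^{s-1}_{p,r}}$, respectively.

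\textbf{The remainder terms.} These have both frequencies of size $\gtrsim 2^j$ and are summed with the help of $s>0$, using $\|\Delta_{q}v\|_{L^\infty}\lesssim 2^{-q}\|\nabla v\|_{L^\infty}$ for $q\geq 0$. The low block $q=-1$ is controlled directly through the support condition. In the first inequality I would put $\nabla f$ in $L^\infty$ and $\nabla v$ in $B^{s-1}_{p,r}$, or vice versa, whichever is natural.

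\textbf{The term $\Delta_j\mc T_{\d_kf}v^k$, the main obstacle.} In the first inequality this is simply $\lesssim\|\nabla f\|_{L^\infty}\|\nabla v\|_{B^{s-1}_{p,r}}$. In the second inequality, however, only $\|f\|_{L^\infty}$ is available, so the bound must instead read
\[
\|S_{q-1}\d_kf\|_{L^\infty}\,\|\Delta_qv\|_{L^p}\,\lesssim\,2^{q}\,\|f\|_{L^\infty}\,\|\Delta_qv\|_{L^p}\,.
\]
One then has to replace $2^q\|\Delta_qv\|_{L^p}$ by $\|\Delta_q\mc P(D)v\|_{L^p}$. Here the choice of $\mc P(D)$ matters. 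For $\mc P(D)=\nabla$ the replacement is Bernstein's inequality for $q\geq0$. For $\mc P(D)=\curl$ with $\div v=0$, the identity $\Delta_q\nabla v=\Delta_q\nabla\nabla^\perp(-\Delta)^{-1}\curl v$ is a smooth multiplier of order $0$ on the annulus. So for $q\geq 0$ one again gets $2^q\|\Delta_qv\|_{L^p}\lesssim\|\Delta_q\curl v\|_{L^p}$, even when $p\in\{1,\infty\}$. The finitely many low-frequency blocks would be absorbed into $\|\nabla v\|_{L^\infty}\|f\|_{B^{s-1}_{p,r}}$, moving the derivative onto $v$ via integration by parts: $\d_k(\cdot\,v^k)$ with $\div v=0$. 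I expect this bookkeeping at low frequencies, and the precise sense in which $\mc P(D)$ controls $\nabla v$, to be the delicate point; otherwise I would defer to \cite{Cobb-F} as the paper does.
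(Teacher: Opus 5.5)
Your decomposition is the one behind Lemma 2.100 of \cite{BCD}. The paper does not prove this lemma; it only cites that result together with \cite{Cobb-F}. Your treatment of $[\mc T_{v^k},\Delta_j]\d_kf$ and of the two paraproduct terms is sound. The low-frequency bookkeeping you worry about in $\Delta_j\mc T_{\d_kf}v^k$ is actually void: $S_{q-1}=0$ for $q\leq0$, so only blocks $\Delta_qv$ with $q\geq1$ occur, and replacing $2^q\|\Delta_qv\|_{L^p}$ by $\|\Delta_q\mc P(D)v\|_{L^p}$ always works there. The real difficulties are in the remainder terms, and there your sketch has two genuine gaps.

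\textbf{First gap: the block $q=-1$.} It cannot be controlled term by term. The right-hand sides involve only $\nabla v$, while each remainder separately sees $\Delta_{-1}v$ itself. For instance, for $v\equiv c$ constant (admissible when $p=+\infty$) you get $\mc R(c^k,\d_k\Delta_jf)=c^k\,(\Delta_{-1}+\Delta_0)\d_k\Delta_jf\neq0$ for $j\in\{-1,0,1\}$, although $\nabla v=0$ and the commutator vanishes. What you need is a cancellation. The $q=-1$ parts of $\mc R(v^k,\d_k\Delta_jf)$ and $\Delta_j\mc R(v^k,\d_kf)$ combine into $[\Delta_{-1}v^k,\Delta_j]\,\d_k(\Delta_{-1}+\Delta_0)f$. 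This vanishes for all but finitely many $j$, and your mean value argument bounds it by $\|\nabla v\|_{L^\infty}\|(\Delta_{-1}+\Delta_0)f\|_{L^p}$. Equivalently, split off $\Delta_{-1}v$ at the start.

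\textbf{Second gap: the endpoint $s-1=0$.} Summing the remainders thanks to $s>0$ fails for the second inequality when $s-1=0$, i.e.\ $p=+\infty$, $s=1$, $r=1$. Condition \eqref{cond:Lipschitz} allows this case, and the paper needs it for Theorem \ref{th:endpoint} with $s=1$. In $\Delta_j\mc R(v^k,\d_kf)$ both factors live at frequency $2^q$ with $q\geq j-3$. The gain $2^{-q}$ on $\Delta_qv$ is exactly eaten by the $2^q$ coming from $\d_k$. You therefore obtain $\|\nabla v\|_{L^\infty}\sum_{q\geq j-3}2^{(j-q)(s-1)}\,2^{q(s-1)}\|\wtilde\Delta_qf\|_{L^p}$, with $\wtilde\Delta_q:=\Delta_{q-1}+\Delta_q+\Delta_{q+1}$. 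This is in $\ell^r$ only when $s-1>0$, consistently with Proposition \ref{p:op}, since $\mc R$ only reaches $B^0_{p,\infty}$ at total regularity $0$.

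Here you must use $\div v=0$. Writing $\sum_k\mc R(v^k,\d_kf)=\sum_k\d_k\mc R(v^k,f)-\mc R(\div v,f)$, the derivative leaves $\Delta_j$. The sum becomes $\sum_{q\geq j-3}2^{(j-q)s}(\cdots)$, which converges since $s=1>0$. This is the clause ``$\sigma>-1$ if $\div v=0$'' of \cite[Lemma 2.100]{BCD}, which otherwise requires $\sigma>0$. So at this endpoint the second estimate has to be read for divergence-free $v$, as in all of the paper's applications; the footnote choice $\mc P(D)=\curl$ presupposes this anyway.
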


The second commutator estimate we need involves the commutator of a paraproduct operator with a Fourier multiplier. It is contained in Lemma 2.99 of \cite{BCD},
but we report it here for the reader convenience.
\begin{lemma}\label{l:ParaComm}
Let $\psi$ be a smooth function on $\mathbb{R}^d$, which is homogeneous of degree $m$ away from a neighbourhood of $\,0$.
Then, for any vector field $v$ such that $\nabla v \in L^\infty$ and for any $s\in\R$, one has:
\begin{equation*}
\forall\, f \in B^s_{p,r}\,, \qquad\qquad \left\| \big[ \mathcal{T}_v, \psi(D) \big] f \right\|_{B^{s-m+1}_{p,r}}\, \lesssim\,
\|\nabla v\|_{L^\infty} \|f\|_{B^s_{p,r}}\,.
\end{equation*}
\end{lemma}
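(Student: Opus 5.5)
The statement to be proved is Lemma \ref{l:ParaComm}, the commutator of a paraproduct with a Fourier multiplier. The plan is to work block by block on the dyadic decomposition. By definition,
\[
\big[\mc T_v,\psi(D)\big]f\,=\,\sum_j\Big(S_{j-1}v\,\psi(D)\Delta_jf\,-\,\psi(D)\big(S_{j-1}v\,\Delta_jf\big)\Big)\,=\,\sum_j\big[S_{j-1}v,\psi(D)\big]\Delta_jf\,,
\]
since $\psi(D)$ commutes with $\Delta_j$. Each term has Fourier support in an annulus of size $2^j$ for $j$ large; the finitely many low-frequency indices can be handled directly.

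Next we localise the multiplier. For $j\geq j_0$ the spectrum of $S_{j-1}v\,\Delta_jf$ lies in an annulus $2^j\mc C'$ avoiding the neighbourhood of $0$ where $\psi$ fails to be homogeneous. Choose $\tilde\vphi$ supported in an annulus and equal to $1$ on $\mc C'$. Then $\psi(D)\,g=2^{jm}\,h_j\ast g$ on such functions, with
\[
h_j(x)\,=\,2^{jd}\,h(2^jx)\qquad \mbox{ and }\qquad h\,:=\,\mc F^{-1}\big(\psi\,\tilde\vphi\big)\in\mc S\,,
\]
where we have used the homogeneity of $\psi$.

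For the commutator itself, write
\[
[S_{j-1}v,h_j\ast]\,g(x)\,=\,\int h_j(x-y)\,\big(S_{j-1}v(x)-S_{j-1}v(y)\big)\,g(y)\,dy\,.
\]
The first-order Taylor formula gives $|S_{j-1}v(x)-S_{j-1}v(y)|\leq\|\nabla S_{j-1}v\|_{L^\infty}\,|x-y|$. Young's inequality then yields
\[
\big\|[S_{j-1}v,\psi(D)]\Delta_jf\big\|_{L^p}\,\lesssim\,2^{jm}\,\big\||x|\,h_j\big\|_{L^1}\,\|\nabla v\|_{L^\infty}\,\|\Delta_jf\|_{L^p}\,\lesssim\,2^{j(m-1)}\,\|\nabla v\|_{L^\infty}\,\|\Delta_jf\|_{L^p}\,,
\]
because $\||x|\,h_j\|_{L^1}=2^{-j}\,\||x|\,h\|_{L^1}$.

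Finally we sum. Since each term is spectrally supported in $2^j\mc C'$, the standard lemma on series with annulus-supported terms (Lemma 2.23 of \cite{BCD}) gives a $B^{s-m+1}_{p,r}$ bound by the $\ell^r$ norm of $2^{j(s-m+1)}$ times the above. That equals $\|\nabla v\|_{L^\infty}\|f\|_{B^s_{p,r}}$ up to a constant.

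For the finitely many low indices $j<j_0$, all terms are supported in a fixed ball. There $\psi\chi$ is smooth with an $L^1$ kernel, so the same Taylor argument with a fixed kernel having a finite first moment gives the bound by $\|\nabla v\|_{L^\infty}\|\Delta_jf\|_{L^p}$. This suffices, since any negative regularity weight is harmless at bounded frequencies.

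The main obstacle is only this bookkeeping: the non-homogeneity of $\psi$ near $0$ and the use of $\nabla v$ rather than $v$ itself. The latter is ensured because only differences of $S_{j-1}v$ enter.
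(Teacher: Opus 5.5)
Your argument is correct and is essentially the proof of Lemma 2.99 in \cite{BCD}, which is where the paper's proof lies, since the paper only cites it: write $[\mc T_v,\psi(D)]f=\sum_j[S_{j-1}v,\psi(D)]\Delta_jf$, localise with $\tilde\vphi(2^{-j}D)$, use the first-moment commutator bound (Lemma 2.97 there) to gain $2^{-j}\|\nabla v\|_{L^\infty}$, and sum over annuli. One detail needs tidying: the rescaled kernel should be $h=\mc F^{-1}\big(\psi_\infty\,\tilde\vphi\big)$, where $\psi_\infty$ is the degree-$m$ homogeneous function on $\R^d\setminus\{0\}$ that agrees with $\psi$ away from $0$, because $\psi$ itself need not be homogeneous on the fixed annulus $\supp\tilde\vphi$.
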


\subsection{Time-dependent Besov spaces} \label{ss:Ch-Ler}
In the present subsection, we extend the above construction to time-dependent Besov spaces, often named \emph{Chemin-Lerner spaces}
(as first introduced in \cite{Chem-Ler}).

They naturally arise, and reveal to be particularly useful, when solving transport-diffusion equations
in Besov spaces (see Paragraph \ref{sss:transp-diff} below).
Indeed, when solving evolutionary PDEs, it is natural to use
spaces of type $L^q_T\big(X\big)$, with $X$ denoting some Banach space. However, when $X$ is a Besov space as in our case,
one has to localise first the equations by Littlewood-Paley decomposition and make estimates for each dyadic block. This will provide one
with  estimates of the $L^q_T\big(L^p\big)$ norm of  each dyadic block $\Delta_j$ \emph{before} performing the $\ell^r$ summation over $j$.
This  leads to the following definition (see \cite{Chem-Ler}, \cite{BCD}).
\begin{defin}\label{def:Besov,tilde}
Let $s\in \R$, $(q,p,r)\in [1,+\infty]^3$ and $T\in [0,+\infty]$. We set
$$
\|u\|_{\wtilde L^q_T(B^s_{p,r})}
\,:=\,\left\| \Bigl(  2^{js}  \|\Delta_j u(t)\|_{L^q_T(L^p)} \Bigr)_{j\geq -1}\right\|_{\ell^r}\,.
$$
We also set $\wtilde{\mc C}_T\big(B^s_{p,r}\big)\,:=\,\wtilde L_T^\infty\big(B^s_{p,r}\big)\cap \mc C\big([0,T];B^s_{p,r}\big)$.
\end{defin}

The relation between this new class of spaces and the classical spaces $L^q_T\big(B^s_{p,r}\big)$ can be easily recovered by use of the Minkowski
inequality: we have
\begin{equation} \label{est:Chem-Ler}
\|u\|_{\wtilde{L}^q_T(B^s_{p,r})}\;\leq\;\|u\|_{L^q_T(B^s_{p,r})}\quad \mbox{ if }\ q\,\leq\,r\,,\qquad\qquad
\|u\|_{\wtilde{L}^q_T(B^s_{p,r})}\;\geq\;\|u\|_{L^q_T(B^s_{p,r})} \quad \mbox{ if }\ q\,\geq\,r\,.
\end{equation}
Observe that we have equality of the two norms, so of the spaces, whenever $q=r$. In particular, we have
\[
 \wtilde{L}^1_T\big(B^s_{p,1}\big)\,=\,L^1_T\big(B^s_{p,1}\big)\,.
\]
This, together with the material of Paragraph \ref{sss:transp-diff}, motivates our choice of working in this class $B^s_{p,1}$. The generalisation of our
results to general $B^s_{p,r}$ spaces works similarly, at the price of additional technical complications, which we want to avoid here for the sake of
better clarity.

\medbreak
Time-dependent Besov spaces behave well with respect to product and paralinearisation operations. For instance, concerning
products, Bony's decomposition \eqref{eq:bony} and Proposition \ref{p:op} imply the following statement.
\begin{cor}\label{c:op_time}
Let $s>0$ and $(p,r,q_1,q_2,q_3,q_4)\in[1,+\infty]^6$ such that
\[
\frac 1q\,:=\,\frac{1}{q_1}\,  +\,\frac{1}{q_2}\,=\,\frac{1}{q_3}\, +\,\frac{1}{q_4}\,,\qquad\qquad\mbox{ with }\qquad q\in[1,+\infty]. 
\]

Then, there exists a constant $C>0$, depending only on $(d, s, p, r)$, such that 
$$
\|uv\|_{\wtilde L^q_T(B^s_{p,r})}\,
\leq\, C\,\left(\|u\|_{L^{q_1}_T(L^\infty)}\,\|v\|_{\wtilde L^{q_2}_T(B^{s}_{p,r})}
\,+\, \|u\|_{\wtilde L^{q_3}_T(B^{s}_{p,r})}\, \| v\|_{L^{q_4}_T(L^\infty)}\right)\,.
$$
\end{cor}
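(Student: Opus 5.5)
The estimate in Corollary \ref{c:op_time} will be obtained by running the proof of Corollary \ref{c:tame} block by block in time. We apply Bony's decomposition \eqref{eq:bony} to write $uv=\mc T_uv+\mc T_vu+\mc R(u,v)$, localise each piece with $\Delta_j$, take the $L^p$ norm in space for fixed $t$, and only then take the $L^q_T$ norm in time. The one rule throughout is that the $\ell^r$ summation in $j$ is performed last. This matches Definition \ref{def:Besov,tilde}, and it is the only genuinely new feature compared with the time-independent statement.

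For the paraproduct $\mc T_uv=\sum_k S_{k-1}u\,\Delta_kv$, the spectral support of $S_{k-1}u\,\Delta_kv$ lies in an annulus of size $2^k$. Hence $\Delta_j\mc T_uv=\sum_{|k-j|\leq N_0}\Delta_j\big(S_{k-1}u\,\Delta_kv\big)$ for a fixed integer $N_0$. For each $t$, the $L^1$ bound on the kernels of $S_{k-1}$ and $\Delta_j$ gives
\[
\|\Delta_j\mc T_uv(t)\|_{L^p}\,\lesssim\,\sum_{|k-j|\leq N_0}\|u(t)\|_{L^\infty}\,\|\Delta_kv(t)\|_{L^p}\,.
\]
We then take the $L^q_T$ norm and use H\"older's inequality in time with $1/q=1/q_1+1/q_2$. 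Multiplying by $2^{js}$ and summing in $\ell^r$ over $j$ (a finite convolution in $k-j$) gives the bound $C\,\|u\|_{L^{q_1}_T(L^\infty)}\|v\|_{\wtilde L^{q_2}_T(B^s_{p,r})}$. The term $\mc T_vu$ is handled symmetrically with the exponents $(q_3,q_4)$ and produces the second term on the right-hand side.

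The remainder $\mc R(u,v)=\sum_k\Delta_ku\,\widetilde\Delta_kv$ is the only place where $s>0$ is used, and it is the main point needing care, because in time-dependent spaces we cannot simply quote Proposition \ref{p:op}. Each term $\Delta_ku\,\widetilde\Delta_kv$ has spectrum in a ball of radius $\sim2^k$, so $\Delta_j\mc R(u,v)=\sum_{k\geq j-N_0}\Delta_j\big(\Delta_ku\,\widetilde\Delta_kv\big)$. For each $t$ we bound
\[
\|\Delta_j(\Delta_ku\,\widetilde\Delta_kv)(t)\|_{L^p}\,\lesssim\,\|u(t)\|_{L^\infty}\|\widetilde\Delta_kv(t)\|_{L^p}\,.
\]
Applying H\"older's inequality in time and multiplying by $2^{js}$ gives
\[
2^{js}\|\Delta_j\mc R\|_{L^q_T(L^p)}\,\lesssim\,\sum_{k\geq j-N_0}2^{-(k-j)s}\,\|u\|_{L^{q_1}_T(L^\infty)}\,2^{ks}\|\widetilde\Delta_kv\|_{L^{q_2}_T(L^p)}\,.
\]
Since $s>0$, the sequence $\big(2^{-ms}\mathds 1_{m\geq -N_0}\big)_m$ belongs to $\ell^1$. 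Young's inequality for series therefore bounds the $\ell^r$ norm of the left-hand side by $C\,\|u\|_{L^{q_1}_T(L^\infty)}\|v\|_{\wtilde L^{q_2}_T(B^s_{p,r})}$, where $C$ depends only on $s$ and $d$.

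Adding the three contributions proves the claimed inequality, with a constant depending only on $(d,s,p,r)$. Finiteness of the right-hand side also shows that $uv$ belongs to $\wtilde L^q_T(B^s_{p,r})$.
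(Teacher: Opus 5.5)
Your proof is correct and is essentially the paper's argument: the paper only states that the estimate follows from Bony's decomposition \eqref{eq:bony} and the paraproduct/remainder bounds of Proposition \ref{p:op}, and you carry this out block by block, applying H\"older's inequality in time to each $\Delta_j$ piece before the $\ell^r$ summation, with $s>0$ needed only for $\mc R(u,v)$. One small caveat is that the annulus localisation of $S_{k-1}u\,\Delta_kv$ requires the standard normalisation of $\chi$ from \cite{BCD}; with the paper's literal choice $\supp\chi\subset B(0,2)$ these terms are only localised in balls $B(0,C2^k)$, but then the same $\ell^1$-convolution argument you use for the remainder (valid because $s>0$) also closes the paraproduct estimate.
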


We avoid to present here equivalent versions in Chemin-Lerner spaces of the commutator estimates stated above, as they will not be needed in our study.
We only point out that time-dependent Besov spaces behave well also with respect to interpolation: we have
the inequality
\begin{align}
\label{est:CL-interp}
\left\|u\right\|_{\wtilde L^q_T(\B)}\,&\leq\,\left\|u\right\|_{\wtilde L^{q_1}_T(B^{s_1}_{p,r})}^\beta\;\left\|u\right\|_{\wtilde L^{q_2}_T(B^{s_2}_{p,r})}^{1-\beta} \\
\nonumber
&\qquad \qquad
\mbox{ provided }\quad \frac{1}{q}\,=\,\frac{\beta}{q_1}\,+\,\frac{1-\beta}{q_2}\quad \mbox{ and }\quad s\,=\,\beta\,s_1\,+\,(1-\beta)\,s_2\,.
\end{align}

\subsection{Transport and transport-diffusion equations in Besov spaces} \label{ss:tools-est}

We now apply Littlewood-Paley theory to the study of some evolution equations in Besov spaces.
In fact, system \eqref{eq:2D-microp} presents a hyperbolic-parabolic coupling, where the hyperbolic part is given by the Euler-type equation
for $u$ and the parabolic part concerns the transport-diffusion equation for $m$.
Therefore, it is natural to consider the linear problem
\begin{equation}\label{eq:TV}
\left\{\begin{array}{l}
\partial_t f\, +\, v \cdot \nabla f\,-\,\nu\,\Delta f\, = \,g \\[1ex]
f_{|t = 0}\, =\, f_0\,,
\end{array}\right.
\end{equation}
where $f_0$, $g$ and $v$ are suitable data of the problem and where the ``diffusion coefficient'' $\nu$ satisfies $\nu\geq0$.
Throughout this part, the velocity field $v=v(t,x)$ will always be assumed to be 
Lipschitz continuous with respect to the space variable.

In this subsection we present several \tsl{a priori} estimates in Besov spaces for solutions $f$ to the Cauchy problem \eqref{eq:TV},
in both cases in which $\nu>0$ (transport-diffusion equation) and $\nu=0$ (transport equation).
Most of the material presented here can be found in Chapter 3 of \cite{BCD}.

\subsubsection{Transport estimates} \label{sss:transport}
We first discuss the case $\nu=0$, in which the equation in \eqref{eq:TV} reduces to a classical transport equation.
The following statement contains the conclusions of Theorems 3.14 and 3.19 of \cite{BCD}, in the case when the transport field $v$ is Lipschitz-continuous.
Here below, given a Banach space $X$, we denote by $\mc C^0_\weak\big([0,T];X\big)$ the space of functions which are continuous in time with values in
$X$ endowed with its weak topology.

\begin{thm}\label{th:transport}
Let $(s,p,r)\in\R\times[1,+\infty]^2$ satisfy the Lipschitz condition \eqref{cond:Lipschitz} and $T>0$ be fixed.
Assume that $v$ is a 
vector field belonging to $L^1\big([0,T];B^s_{p,r}\big)$ such that, for some
$q > 1$ and $M > 0$, $v \in L^q\big([0,T];B^{-M}_{\infty, \infty}\big)$.
Finally, let $\s\leq s$ be such that
\[
\s\,\geq\,-\,d\,\min\left\{\frac{1}{p}\,,\,\frac{1}{p'}\right\}\qquad\qquad \mbox{ or, \ \ \ \ \  if }\ \div v=0\,,\quad 
\s\,\geq\,-\,1\,-\,d\,\min\left\{\frac{1}{p}\,,\,\frac{1}{p'}\right\}\,.
\]

Then, for any external force $g \in L^1\big([0,T];B^\s_{p,r}\big)$ and any initial datum $f_0 \in B^\s_{p,r}$, the transport equation \eqref{eq:TV} has a unique solution $f$ in the space:
\begin{itemize}
\item $\mc C\big([0,T];B^\s_{p,r}\big)$ if $r < +\infty$;
\item $\left( \bigcap_{\s'<\s} \mc C\big([0,T];B^{\s'}_{p,\infty}\big) \right) \cap \mc C_{\weak}\big([0,T];B^\s_{p, \infty}\big)$, in the case $r = +\infty$.
\end{itemize}
Moreover, after defining the function $V(t)$ as
\[
V(t)\,:=\,\int^t_0\left\|\nabla v(\t)\right\|_{B^{s-1}_{p,r}}\,\dd\t\,,
\]
the unique solution $f$ satisfies the following estimate, for a suitable universal constant $C>0$:
\begin{equation*} 
\forall\,t\in[0,T]\,,\qquad
\| f(t) \|_{B^\s_{p,r}}\, \leq\, e^{C\,V(t)}\,\left(\| f_0 \|_{B^\s_{p,r}} + \int_0^t e^{-C\,V(\tau)}\, \| g(\t) \|_{B^\s_{p,r}} \dd\t\right)\,.
\end{equation*}
In the case when $v=f$, the previous estimate holds true with $V'(t)\,=\,\left\|\nabla f(t)\right\|_{L^\infty}$.
\end{thm}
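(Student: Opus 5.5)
We prove the a priori estimate first for smooth solutions and then obtain existence and uniqueness by a standard approximation and compactness argument. Apply the dyadic block $\Delta_j$ to the equation in \eqref{eq:TV} with $\nu=0$. Then $f_j:=\Delta_j f$ solves
\[
\d_t f_j\,+\,v\cdot\nabla f_j\,=\,\Delta_j g\,+\,R_j\,,\qquad\qquad R_j\,:=\,\big[v\cdot\nabla,\Delta_j\big]f\,.
\]
Multiply by $|f_j|^{p-2}f_j$ and integrate in space (for $p=+\infty$, argue along the flow of $v$, which is well defined because $v$ is Lipschitz). Integrating by parts the transport term gives
\[
\|f_j(t)\|_{L^p}\,\leq\,\|f_j(0)\|_{L^p}\,+\,\int_0^t\Big(\|\Delta_j g\|_{L^p}\,+\,\|R_j\|_{L^p}\,+\,\tfrac1p\,\|\div v\|_{L^\infty}\,\|f_j\|_{L^p}\Big)\dd\t\,.
\]
The divergence term is controlled by $\|\nabla v\|_{L^\infty}\lesssim\|\nabla v\|_{B^{s-1}_{p,r}}$, using the Lipschitz embedding under condition \eqref{cond:Lipschitz}.

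The core of the argument is the commutator bound
\[
2^{j\s}\,\|R_j\|_{L^p}\,\lesssim\, c_j\,\|\nabla v\|_{B^{s-1}_{p,r}}\,\|f\|_{B^\s_{p,r}}\,,\qquad \big\|(c_j)_j\big\|_{\ell^r}\leq1\,,
\]
valid throughout the whole range of $\s$ in the statement. For $\s=s$ or $\s=s-1$ this follows from Lemma \ref{l:CommBCD}. For general $\s$, I would prove it directly by Bony's decomposition \eqref{eq:bony}, writing
\[
R_j\,=\,\big[\mc T_{v^k},\Delta_j\big]\d_kf\,+\,\Delta_j\Big(\mc T_{\d_kf}v^k+\mc R\big(v^k,\d_kf\big)\Big)\,-\,\Big(\mc T_{\d_k\Delta_jf}v^k+\mc R\big(v^k,\d_k\Delta_jf\big)\Big)\,.
\]
The first piece is a genuine commutator with a Fourier multiplier. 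By the mean value formula (as in Lemma \ref{l:ParaComm}) it gains one derivative through $\|\nabla v\|_{L^\infty}$, and this works for every $\s$. The paraproducts $\mc T_{\d f}v$ only need $\s\leq s$, i.e.\ that $v$ is as smooth as $f$. The remainder terms are the delicate ones: their continuity via Proposition \ref{p:op} requires $(s-1)+\s>0$ after duality/embedding considerations. This is exactly where the lower bound $\s\geq -d\min\{1/p,1/p'\}$ enters. If $\div v=0$, one writes $v\cdot\nabla f=\div(vf)$ and moves the derivative outside the remainder, which gains one extra unit and gives the condition $\s\geq -1-d\min\{1/p,1/p'\}$.

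I expect this remainder bookkeeping at negative $\s$ to be the main obstacle.

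With the commutator bound in hand, multiply the block estimate by $2^{j\s}$ and take the $\ell^r$ norm (Minkowski's inequality moves it inside the time integral). Setting $V'(t)=\|\nabla v(t)\|_{B^{s-1}_{p,r}}$, this gives
\[
\|f(t)\|_{B^\s_{p,r}}\,\leq\,\|f_0\|_{B^\s_{p,r}}\,+\,\int_0^t\Big(\|g\|_{B^\s_{p,r}}\,+\,C\,V'\,\|f\|_{B^\s_{p,r}}\Big)\dd\t\,,
\]
and Gronwall's lemma yields the stated exponential estimate.

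In the case $v=f$, the first inequality of Lemma \ref{l:CommBCD} gives $\|\nabla f\|_{L^\infty}\,\|f\|_{B^s_{p,r}}$ plus $\|\nabla f\|_{B^{s-1}_{p,r}}\,\|\nabla f\|_{L^\infty}$. The latter is again bounded by $\|\nabla f\|_{L^\infty}\,\|f\|_{B^s_{p,r}}$, so one may take $V'=\|\nabla f\|_{L^\infty}$.

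For existence, regularise the data: take $S_n f_0$, $S_n g$, $S_n v$ and solve by characteristics. The estimate gives uniform bounds in $L^\infty_T(B^\s_{p,r})$. The assumption $v\in L^q_T(B^{-M}_{\infty,\infty})$ supplies time-equicontinuity in a very negative norm, so Ascoli's theorem combined with the Fatou property of Besov spaces yields a limit solution. Uniqueness follows by applying the estimate to the difference of two solutions, which has zero data, in the lowest admissible regularity. Time continuity with values in $B^\s_{p,r}$ for $r<+\infty$ is obtained by showing that the high-frequency tails $\sum_{j\geq N}$ are small uniformly in $t$, using the block estimate, since each $\Delta_j f$ is continuous in time. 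For $r=+\infty$ one gets weak continuity, and continuity in $B^{\s'}_{p,\infty}$ for every $\s'<\s$ by interpolation.
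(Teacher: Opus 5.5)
Your plan follows the standard proof. The paper gives no argument of its own here: it recalls Theorems 3.14 and 3.19 of Bahouri--Chemin--Danchin, whose proofs have exactly your architecture. That is, block-wise $L^p$ estimates along the flow, a commutator bound from Bony's decomposition with the remainder rewritten in divergence form, and Gr\"onwall; then smoothing of the data, uniform bounds, Ascoli plus Fatou, uniqueness from the a priori estimate, and uniform smallness of high-frequency tails for time continuity.

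Two details should be added. First, since only $\nabla v$ enters $V$, the pieces of your remainder terms containing $\Delta_{-1}v$ must be regrouped into the commutator $[\Delta_{-1}v\cdot\nabla,\Delta_j]f$, which only involves $\nabla\Delta_{-1}v$. Otherwise your bound contains $\|v\|_{B^s_{p,r}}$ rather than $\|\nabla v\|_{B^{s-1}_{p,r}}$. Second, for uniqueness you apply the block estimate to non-smooth solutions. This is legitimate because each $\Delta_j f$ is smooth in $x$.

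The claim that fails as stated is that the commutator bound holds on the whole closed range of $\sigma$. Your bookkeeping of the remainder through Proposition~\ref{p:op} needs strictly positive total regularity. After embedding $v$ into an $L^{p'}$-based space when $p<2$, this reads
\[
\sigma\,>\,-\,d\,\min\Big\{\frac1p\,,\,\frac1{p'}\Big\}\,-\,\Big(s-1-\frac dp\Big)\,,
\]
with the threshold lowered by one when $\mathrm{div}\,v=0$. If $s>1+d/p$ this includes the endpoint of the statement. If $s=1+d/p$ (hence $r=1$), the total regularity there is exactly $0$, and the remainder only lands in a space with third index $\infty$. In that case the $\ell^1$ commutator bound is actually false.

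For instance, let $p=2$ and $\sigma=-d/2$. Take $v=a\,e_1$ and $f$, where $a$ and $f$ are wave packets of frequency $\approx 2^k$ concentrated at scale $2^{-k}$, chosen so that $\int v\cdot\nabla f=-\int f\,\mathrm{div}\,v\neq 0$. For $j\leq k-C$ one has $\Delta_j f=0$, hence $R_j=-\Delta_j(v\cdot\nabla f)$. Each term $2^{-jd/2}\|R_j\|_{L^2}$ with $0\leq j\leq k-C$ is then comparable to $\|\nabla v\|_{B^{d/2}_{2,1}}\|f\|_{B^{-d/2}_{2,1}}$, so the sum loses a factor of order $k$.

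Your argument therefore proves the statement with strict inequality on $\sigma$, or with equality when $s>1+d/p$. This matches the cited results, where the inequality is strict unless $r=+\infty$; the non-strict bound in the recalled statement is not reached when $s=1+d/p$. This is harmless for the paper, which only applies the theorem with divergence-free transport fields and $\sigma\geq 0$.
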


\subsubsection{Estimates for transport-diffusion equations} \label{sss:transp-diff}

We now focus on the case $\nu>0$ in \eqref{eq:TV}. In this situation, the classical estimates in Besov spaces (which naturally involve the use
of Chemin-Lerner spaces) present an exponential growth in the Lipschitz norm of the transport field $v$; we refer to Theorem 3.37 of \cite{BCD} for details.

However, such an exponential growth is not convenient for our scopes. Owing to the difference of regularity between $u$ and $m$ in system
\eqref{eq:2D-microp}, it is much more convenient to look at the transport term $u\cdot\nabla m$ as a forcing term.
Therefore, here we replace equation \eqref{eq:TV} with the forced heat equation
\begin{equation}\label{eq:heat}
\left\{\begin{array}{l}
\partial_t f\,-\,\nu\,\Delta f\, = \,h \\[1ex]
f_{|t = 0}\, =\, f_0\,,
\end{array}\right.
\end{equation}
where, now, the forcing term $h$ also includes the transport term. The next statement is simply a formalisation of estimate (3.39) of \cite{BCD}.

\begin{thm} \label{t:tr-diff}
Let $\nu>0$, $T>0$ and $\s\in\R$ be fixed. Take an initial datum $f_0\in B^\s_{p,r}$ and an external force
$h\in\wtilde L^1_T\big(B^\s_{p,r}\big)$.

Then, there exists a constant $C>0$, depending only on the regularity index $\s$, such that, for any smooth solution
$f$ of equation \eqref{eq:heat} and any $q\in[1,+\infty]$ and any $q_1\in[1,q]$, one has
\begin{align*}
\nu^{1/q}\,\left\|f\right\|_{\wtilde L^q_T(B^{\s+2/q}_{p,r})}\,\leq\,C\,\big(1+\nu T\big)^{\max\{1/q, 1+1/q-1/q_1\}}\,
\left(\left\|f_0\right\|_{B^\s_{p,r}}\,+\,\nu^{-1+1/q_1}\,\left\|h\right\|_{\wtilde L^{q_1}_T(B^{\s-2+2/q_1}_{p,r})}\right)\,.
\end{align*}
In particular, one deduces that
\begin{align*}
\left\|f\right\|_{\wtilde L^\infty_T(B^{\s}_{p,r})}\,+\,\nu\,\left\|f\right\|_{\wtilde L^1_T(B^{\s+2}_{p,r})}\,\leq\,C\,\big(1\,+\,\nu\,T\big)\,
\left(\left\|f_0\right\|_{B^\s_{p,r}}\,+\,\left\|h\right\|_{\wtilde L^1_T(B^{\s}_{p,r})}\right)\,.
\end{align*}
\end{thm}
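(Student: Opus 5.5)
The estimate is linear and decouples over the dyadic blocks. I would therefore localise the equation in frequency and solve each block explicitly through Duhamel's formula. Applying $\Delta_j$ to \eqref{eq:heat} gives
\[
\Delta_j f(t)\,=\,e^{\nu t\Delta}\Delta_j f_0\,+\,\int_0^t e^{\nu(t-\t)\Delta}\Delta_j h(\t)\,\dd\t\,.
\]
The basic ingredient is the classical smoothing estimate for the heat semigroup on annuli (Lemma 2.4 of \cite{BCD}): for $j\geq0$,
\[
\left\|e^{\nu t\Delta}\Delta_j g\right\|_{L^p}\,\leq\,C\,e^{-c\,\nu t\,2^{2j}}\,\|\Delta_j g\|_{L^p}\,.
\]
For the low-frequency block $j=-1$ only the trivial bound $\left\|e^{\nu t\Delta}\Delta_{-1}g\right\|_{L^p}\leq C\,\|\Delta_{-1}g\|_{L^p}$ holds, since the heat kernel is an $L^1$-normalised convolution.

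\textbf{High frequencies, $j\geq0$.} I take the $L^q_T$ norm of the Duhamel formula. For the initial datum, $\|e^{-c\nu t 2^{2j}}\|_{L^q_T}\leq C\,(\nu 2^{2j})^{-1/q}$. For the forcing term, Young's inequality in time applies with $1+1/q=1/q_1+1/\rho$, and
\[
\|e^{-c\nu t2^{2j}}\|_{L^\rho_T}\,\leq\, C\,(\nu2^{2j})^{-1/\rho}\,=\,C\,(\nu 2^{2j})^{-1-1/q+1/q_1}\,.
\]
Multiplying by $\nu^{1/q}2^{j(\s+2/q)}$ gives exactly
\[
C\,2^{j\s}\|\Delta_j f_0\|_{L^p}\,+\,C\,\nu^{-1+1/q_1}\,2^{j(\s-2+2/q_1)}\,\|\Delta_j h\|_{L^{q_1}_T(L^p)}\,,
\]
with no $T$-dependence.

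\textbf{Low frequency, $j=-1$.} Here there is no decay, so I use the trivial bounds. For the datum, $\nu^{1/q}\|\Delta_{-1}e^{\nu t\Delta}f_0\|_{L^q_T(L^p)}\leq C(\nu T)^{1/q}\|\Delta_{-1}f_0\|_{L^p}$. For the forcing, Young's inequality with the constant kernel $\mathds 1_{[0,T]}$ gives the factor $T^{1/\rho}$. Hence
\[
\nu^{1/q}T^{1+1/q-1/q_1}\,\|\Delta_{-1}h\|_{L^{q_1}_T(L^p)}\,=\,\nu^{-1+1/q_1}\,(\nu T)^{1+1/q-1/q_1}\,\|\Delta_{-1}h\|_{L^{q_1}_T(L^p)}\,.
\]
Since $2^{-\s}$ and the like are just constants depending on $\s$, both terms are bounded by
\[
C(1+\nu T)^{\max\{1/q,\,1+1/q-1/q_1\}}\Big(\|f_0\|_{B^\s_{p,r}}+\nu^{-1+1/q_1}\|h\|_{\wtilde L^{q_1}_T(B^{\s-2+2/q_1}_{p,r})}\Big)\,.
\]

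\textbf{Summation and particular case.} Taking the $\ell^r$ norm over $j\geq-1$ of these blockwise bounds yields the Chemin--Lerner estimate, directly from Definition \ref{def:Besov,tilde}. The stated particular case follows by applying the general estimate twice, once with $q=\infty$ and once with $q=1$, taking $q_1=1$ both times. The exponents are then $\max\{0,0\}=0$ and $\max\{1,1\}=1$, so both are bounded by the factor $(1+\nu T)$.

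\textbf{Main obstacle.} The only delicate point is the frequency-localised decay estimate of the heat semigroup, i.e. the bound of $e^{\nu t\Delta}\Delta_j$ for $j\geq0$. I would prove it by writing $e^{\nu t\Delta}\Delta_j=e^{\nu t\Delta}\wtilde\vphi(2^{-j}D)\Delta_j$, where $\wtilde\vphi$ equals $1$ on the annulus. After rescaling, it then suffices to show that the kernel $\mc F^{-1}\big(e^{-\nu t2^{2j}|\xi|^2}\wtilde\vphi(\xi)\big)$ has $L^1$ norm $\lesssim e^{-c\nu t 2^{2j}}$. This follows by integrating by parts in $\xi$, which gives $(1+|x|^2)^{-d}$ decay, since each derivative of the symbol costs at most polynomial factors in $\nu t2^{2j}$, and these are absorbed by halving the exponent. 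All remaining steps are elementary applications of Young's inequality in time.
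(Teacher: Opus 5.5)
Your proof is correct and is essentially the argument the paper relies on. The paper gives no proof of its own: it only refers to estimate (3.39) of \cite{BCD}, whose proof is exactly your scheme of frequency localisation, the decay of the heat semigroup on dyadic annuli (Lemma 2.4 of \cite{BCD}), Duhamel's formula and Young's inequality in time, with the trivial bound on $\Delta_{-1}$ producing the factors $(\nu T)^{1/q}$ and $(\nu T)^{1+1/q-1/q_1}$. As a minor aside, the weights you absorb at $j=-1$ are $2^{-2/q}$ and $2^{-2(1+1/q-1/q_1)}$, both at most $1$, so your constant is in fact independent of $\sigma$.
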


\section{\tsl{A priori} estimates} \label{s:a-priori}
In this section, we perform \tsl{a priori} estimates for solutions to system \eqref{eq:2D-microp}. Then, given an initial datum
$\big(u_0,m_0\big)$ as in the statement of Theorem \ref{th:global-strong} or \ref{th:endpoint}, we assume to dispose of a smooth solution $\big(u,m\big)$
defined on $\R_+\times\R^2$, decaying fast enough at infinity, and we perform formal computations to bound its norm in some suitable functional spaces.

In Subsection \ref{ss:leb} we will bound $L^p$ norms of $u$, $m$ and the vorticity $\omega$ of the fluid. In Subsection \ref{ss:bes},
instead, we will apply the estimates of Subsection \ref{ss:tools-est} to get a bound on their Besov norms.

\subsection{Bounds for the Lebesgue norms} \label{ss:leb}
Here, we get \tsl{a priori} estimates for suitable Lebesgue norms of the solution. We start the discussion by considering the vorticity $\o$ and
the microrotation field $m$, then we tackle the estimates for the velocity field $u$.

\subsubsection{The vorticity and the microrotation field} \label{ss:Leb_o-m}
To begin with, we compute an equation for the vorticity $\o\,=\,\curl(u)\,=\,\d_1u^2\,-\,\d_2u^1$. By applying the $\curl$ operator to the equation for $u$
and taking advantage that, in the two-dimensional setting, the stretching term identically vanishes, we find that $\o$ solves
the transport equation
\begin{equation} \label{eq:omega}
 \d_t\o\,+\,u\cdot\nabla\o\,=\,-\,\alpha\,\Delta m\,.
\end{equation}
Despite the very simple structure of this equation, its right-hand side represents an obstacle when trying to propagate $L^q$ bounds for $\o$, as it involves
two full derivatives of $m$. This does not combine very well with the transport-diffusion structure of the equation of $m$, at least in the endpoint
case $q=+\infty$.

For this reason, similarly to what it is done in \cite{F-FD}, and crucially inspired by \cite{H-K-R}, we introduce the auxiliary unknown
\begin{equation} \label{eq:def_G}
 \Gamma\,:=\,\frac{1}{\alpha}\,\o\,+\,\frac{1}{\k}\,m\,.
\end{equation}
Combining the equation for $\o$ and the one for $m$, we easily see that $\Gamma$ satisfies the following transport equation with forcing:
\begin{equation} \label{eq:Gamma}
 \d_t\Gamma\,+\,u\cdot\nabla\Gamma\,=\,\frac{\alpha}{\k}\,\o\,=\,\frac{\alpha^2}{\k}\,\Gamma\,-\,\frac{\alpha^2}{\k^2}\,m\,.
\end{equation}
Applying standard estimates for transport equation in Lebesgue spaces, owing to the fact that $u$ is divergence-free, we immediately get,
for any $q\in[1,+\infty]$ and for any $t\geq0$, the bound
\begin{align}
 \label{est:Gamma}
\left\|\Gamma(t)\right\|_{L^q}\,\leq\,\left\|\Gamma_0\right\|_{L^q}\,+\,C\int^t_0\Big(\left\|\Gamma(\t)\right\|_{L^q}\,+\,\left\|m(\t)\right\|_{L^q}\Big)\,\dd\t\,,
\end{align}
for a suitable constant $C>0$ depending on $\alpha$ and $\k$. Here, with obvious notation, we have defined $\Gamma_0$ by the same formula in \eqref{eq:def_G}, where we
use the initial data $\o_0$ and $m_0$ instead of $\o$ and $m$, respectively.

Let us now switch our attention to the equation for $m$. 
By \tsl{e.g.} applying a pointwise method (see for instance Section 3.1 of \cite{F-GB}) to follow the dynamics of the points of maximum and minimum of $m$, we can derive
the following $L^\infty$ bounds for this scalar field: for any $t\geq0$, we have
\begin{align}
 \label{est:m-inf}
\left\|m(t)\right\|_{L^\infty}\,\leq\,\left\|m_0\right\|_{L^\infty}\,+\,C\,
\int^t_0\Big(\left\|\Gamma(\t)\right\|_{L^\infty}\,+\,\left\|m(\t)\right\|_{L^\infty}\Big)\,\dd\t\,,
\end{align}
where again the constant $C>0$ depends only on $\alpha$ and $\k$.
Next, for $q<+\infty$, we multiply the equation for $m$ by the quantity $|m|^{q-2}\,m$ and we integrate over $\R^2$: after noticing that
\[
 \forall\,q\in\,]1,+\infty[\,,\qquad \int_{\R^2}(-\Delta m)\,|m|^{q-2}\,m\,\dx\,=\,(q-1)\int_{\R^2}|m|^{q-2}\,|\nabla m|^2\,\dx\,\geq\,0\,,
\]
we obtain the following estimate, for any $q\in\,]1,+\infty[\,$ and for a constant $C=C(\alpha,\k)>0$ depending only on the quantities in the brackets:
\begin{align}
 \label{est:m-q}
\left\|m(t)\right\|_{L^{q}}\,\leq\,\left\|m_0\right\|_{L^{q}}\,+\,C\,\int^t_0\Big(\left\|\Gamma(\t)\right\|_{L^{q}}\,+\,\left\|m(\t)\right\|_{L^{q}}\Big)\,\dd\t\,.
\end{align}

Summing up inequalities \eqref{est:Gamma}, \eqref{est:m-inf} and \eqref{est:m-q} and applying Gr\"onwall lemma, we finally deduce the following
global bounds for the Lebesgue norms of $\Gamma$ and $m$. For the sake of clarity, we divide our discussion depending on whether
$p<+\infty$ (the case of Theorem \ref{th:global-strong}) or $p=+\infty$ (as in Theorem \ref{th:endpoint}).

\paragraph*{The case of finite $p$.}
Assume to be first in the situation of Theorem \ref{th:global-strong}, hence $p\in\,]1,+\infty[\,$.
By embeddings \eqref{eq:BesovLp}, \eqref{eq:AnnLInfty} and \eqref{cond:Lipschitz}, we have that both $\o_0$ and $m_0$ belong to
the space $L^p\cap L^\infty$, hence so does $\Gamma_0$.

As a consequence, we apply estimate \eqref{est:Gamma} with $q=p$ and $q=+\infty$, estimate \eqref{est:m-inf} and estimate \eqref{est:m-q} with $q=p$. We thus get,
for a constant $C=C(\alpha,\k)>0$ as above, the inequality
\begin{align}
\label{est:G-m_final}
 \forall\,t\geq0\,,\qquad \left\|\Gamma(t)\right\|_{L^{p}\cap L^\infty}\,+\,\left\|m(t)\right\|_{L^{p}\cap L^\infty}\,\leq\,
 \Big(\left\|\o_0\right\|_{L^{p}\cap L^\infty}\,+\,\left\|m_0\right\|_{L^{p_0}\cap L^\infty}\Big)\,e^{C\,t}\,,
\end{align}
where we have set $\|f\|_{X\cap Y}\,:=\,\|f\|_X\,+\,\|f\|_Y$ and where we have also used the expression of $\Gamma_0$ in terms of $\o_0$ and $m_0$.

\begin{rmk} \label{r:m-L^2}
In the special case $p=2$, the above estimate holds true with, in addition, the quantity
$\|\nabla m\|_{L^2_t(L^2)}$ on its left-hand side.
\end{rmk}

Of course, recovering $\o$ in terms of $\Gamma$ and $m$ from relation \eqref{eq:def_G} yields a similar bound also for the vorticity:
\begin{align}
\label{est:o_final}
 \forall\,t\geq0\,,\qquad \left\|\o(t)\right\|_{L^{p}\cap L^\infty}\,\leq\,
 \Big(\left\|\o_0\right\|_{L^{p}\cap L^\infty}\,+\,\left\|m_0\right\|_{L^{p}\cap L^\infty}\Big)\,e^{C\,t}\,,
\end{align}
where $C=C(\alpha,\k)>0$ only depends on the quantities inside the brackets, as before.


\paragraph*{The case $p=+\infty$.}
In the framework of Theorem \ref{th:endpoint}, instead, from the Besov regularity assumption we only deduce that 
$\o_0$ and $m_0$ belong to $L^\infty$.
However, here we make use of the additional assumption that both $\o_0$ and $m_0$ belong to $L^{p_0}(\R^2)$, with $p_0\in\,]1,2[\,$.
This happens also under assumption (G.3) of Theorem \ref{th:global-strong}.

Therefore, we get estimates analogous to \eqref{est:G-m_final} and \eqref{est:o_final} for the $L^{p_0}\cap L^\infty$ norm of the solution:
\begin{align}
\label{est:G-m-o_inf}
\forall\,t\geq0\,,\qquad 
\left\|m(t)\right\|_{L^{p_0}\cap L^\infty}\,+\,\left\|\o(t)\right\|_{L^{p_0}\cap L^\infty}\,\leq\,
 \Big(\left\|\o_0\right\|_{L^{p_0}\cap L^\infty}\,+\,\left\|m_0\right\|_{L^{p_0}\cap L^\infty}\Big)\,e^{C\,t}\,,
\end{align}
for a constant $C=C(\alpha,\k)>0$ depending on the quantities inside the brackets only.

\begin{rmk} \label{r:Leb}
It is worth noticing that estimates \eqref{est:G-m_final} and \eqref{est:o_final}, as well as \eqref{est:G-m-o_inf}, are \emph{global} in time.
\end{rmk}

\subsubsection{Bounds for the velocity field} \label{sss:Leb_u}

To conclude this part, we exhibit bounds also for the Lebesgue norm of the velocity field $u$.
This will be particularly useful in Subsection \ref{ss:bes} below, where we need to propagate the Besov regularity norms of the solution.

Let us first consider the case $1<p<+\infty$, corresponding to the statement of Theorem \ref{th:global-strong}.
Performing a classical $L^p$ estimate for the momentum equation, namely the equation for $u$, we get, for a suitable universal constant $C>0$ and
for any time $t\geq0$, the bound
\begin{equation} \label{est:u-L^p}
 \left\|u(t)\right\|_{L^p}\,\leq\,\left\|u_0\right\|_{L^p}\,+\,\int^t_0\Big(\left\|\nabla\Pi(\t)\right\|_{L^p}\,+\,\left\|\nabla m(\t)\right\|_{L^p}\Big)\,\dd\t\,.
\end{equation}

In order to bound the right-hand side of \eqref{est:u-L^p}
in the case $p<+\infty$, we compute the divergence of the momentum equation. We thus discover
that the pressure solves the elliptic problem
\[
 -\,\Delta\Pi\,=\,\div\Big((u\cdot\nabla)u\Big)\,,
\]
which implies that
\begin{equation} \label{eq:pressure}
 \nabla \Pi\,=\,\nabla(-\Delta)^{-1}\div\Big((u\cdot\nabla)u\Big)\,,
\end{equation}
where the equality should be read in the sense of Fourier multipliers. Thus, since $p\in\,]1,+\infty[\,$, Calder\'on-Zygmund theory implies that
\begin{equation} \label{est:pressure}
 \left\|\nabla\Pi\right\|_{L^p}\,\lesssim\,\left\|u\right\|_{L^p}\,\left\|\nabla u\right\|_{L^\infty}\,,
\end{equation}
where the implicit multiplicative constant depends on $p$.

By exploiting this estimate, we find that
\begin{align*}
\forall\,t\geq0\,,\qquad
\left\|u(t)\right\|_{L^p}\,\lesssim\,\left\|u_0\right\|_{L^p}\,+\,\int^t_0\Big(\left\|u\right\|_{L^p}\,\left\|\nabla u\right\|_{L^\infty}\,+\,
\left\|\nabla m(\t)\right\|_{L^p}\Big)\,\dd\t\,.
\end{align*}

We now have to bound the last term appearing in the previous estimate. Since $s-1\geq d/p>0$, by embddings and interpolation in Besov spaces (see, e.g., \cite[Theorem 2.80]{BCD}), we can write
\begin{equation}\label{EmbeddingBCD}
 \left\|\nabla m\right\|_{L^p}\,\leq\,\left\| m\right\|_{B^s_{p,1}}\,\leq\,
\left\| m\right\|_{B^{s-1}_{p,1}}^{1/2}\,\left\| m\right\|_{B^{s+1}_{p,1}}^{1/2}\,.
\end{equation}
Applying the Young inequality, we thus infer that
\begin{align}
\label{est:u-Leb_p}
\forall\,t\geq0\,,\quad
\left\|u(t)\right\|_{L^p}\,&\leq\,\left\|u_0\right\|_{L^p}\,+\,C(\delta)\int^t_0\Big(\left\|u\right\|_{L^p}\,\left\|\nabla u\right\|_{L^\infty}\,+\,
\left\| m(\t)\right\|_{B^{s-1}_{p,1}}\Big)\,\dd\t \\
\nonumber
&\qquad\qquad\qquad\qquad\qquad\qquad\qquad\qquad\qquad
\,+\,\delta\int^t_0\left\| m(\t)\right\|_{B^{s+1}_{p,1}}\,\dd\t\,,
\end{align}
where the inequality holds true for any $\delta\in\,]0,1[\,$, whose value will be fixed later.

\medbreak
Of course, estimate \eqref{est:u-Leb_p} can be improved in the case $p=2$, or whenever we dispose of the assumption that
$u_0$ and $m_0$ additionally belong to $L^2(\R^2)$. Observe that assumptions (G.1) and (G.2) of Theorem \ref{th:global-strong} fall into this framework.

Indeed, performing an $L^2$ estimate for the momentum equation, in \eqref{est:u-L^p} we can get rid of the pressure term, owing to the fact that
$\nabla\Pi$ is orthogonal to $u$ in $L^2$, since $\div u=0$. More precisely, straightforward computations yield, for any $\delta\in\,]0,1[\,$,
the estimate
\[
 \left\|u(t)\right\|^2_{L^2}\,\leq\,\left\|u_0\right\|_{L^2}^2\,+\,C(\delta)\int^t_0\|u(\t)\|_{L^2}^2\,\dd\t\,+\,
\delta\int^t_0\left\|\nabla m(\t)\right\|^2_{L^2}\,\dd\t\,.
\]
At the same time, we perform an energy estimate in the equation for $m$: differently from the computations leading to \eqref{est:m-q}, we integrate
by parts the vorticity term and we apply the Young inequality: we find,
for any $\delta\in\,]0,1[\,$ to be chosen later, the inequality
\[
\left\|m(t)\right\|^2_{L^{2}}\,+\,\int^t_0\left\|\nabla m(\t)\right\|^2_{L^2}\,\dd\t\,\leq\,
\left\|m_0\right\|^2_{L^{2}}\,+\,C(\delta)\int^t_0\|u(\t)\|_{L^2}^2\,\dd\t\,+\,
\delta\int^t_0\left\|\nabla m(\t)\right\|^2_{L^2}\,\dd\t\,.
\]
Combining these bounds and choosing $\de>0$ small enough, it is easy to get a global bound for the $L^2$ norms of $u$ and $m$:
\begin{align}
\label{est:u-m-energy}
\forall\,t\geq0\,,\qquad \left\|u(t)\right\|^2_{L^2}\,+\,\left\|m(t)\right\|^2_{L^2}\,+\int^t_0\left\|\nabla m(\t)\right\|^2_{L^2}\,\dd\t\,\lesssim\,
\Big(\left\|u_0\right\|^2_{L^{2}}\,+\,\left\|m_0\right\|^2_{L^2}\Big)\,e^{C\,t}\,,
\end{align}
for a universal constant $C=C(\alpha,\k)>0$ depending only on $\alpha$ and $\k$.

We observe that using the previous estimate would yield an analogous version of Theorem \ref{th:endpoint} under a finite energy condition over $u_0$
and $m_0$, as claimed in Remark \ref{r:p=inf}.

\medbreak
In the case $p=+\infty$, as in Theorem \ref{th:endpoint}, we cannot argue anymore in the previous way. However, we can make use of the additional
integrability assumption over $\o_0$ and $m_0$, combined with the following well-known result (see \tsl{e.g.} \cite{Maj-Bert} and \cite{F-FD} for a proof).
\begin{prop} \label{p:BS}
Let $p_0\,\in\,\,]1,2[\,$ and define $q_0\,\in\,\,]2,+\infty[$ as
\[
\frac1{q_0}\,:=\,\frac1{p_0}\,-\,\frac12\,.
\]
Assume that the vorticity $\omega$ satisfies
$\omega\in L^{p_0}(\mathbb R^2)\cap L^\infty(\mathbb R^2)$,
and let $u$ be the associated velocity field given by the Biot-Savart law
\begin{equation} \label{eq:BS}
u\,=\,-\,\nabla^\perp(-\Delta)^{-1}\o\,=\,\frac{1}{2\pi}\,\int_{\R^2}\dfrac{1}{|x-y|^2}\,(x-y)^\perp\,\o(y)\,\dd y\,.
\end{equation}

Then, $u\in L^{q_0}(\R^2)\cap L^\infty(\R^2)$. Moreover, there exists a universal constant $C>0$ such that
\[
\|u\|_{L^{q_0}}\,\leq\,C\,\|\omega\|_{L^{p_0}} \qquad \mbox{ and }\qquad
\|u\|_{L^\infty}\,\leq\,C\,\big(\|\omega\|_{L^{p_0}}\,+\,\|\o\|_{L^\infty}\big)\,.
\]
\end{prop}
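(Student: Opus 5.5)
The plan is to prove the two bounds separately, splitting the Biot--Savart kernel into a near part and a far part.

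\emph{The $L^{q_0}$ bound.} Formula \eqref{eq:BS} shows that $u$ is the convolution of $\o$ with the kernel $K(x)=\frac{1}{2\pi}\,x^\perp/|x|^2$, which satisfies $|K(x)|\lesssim |x|^{-1}$. Hence, componentwise, $|u|\lesssim I_1(|\o|)$, where $I_1$ is the Riesz potential of order $1$ on $\R^2$. The Hardy--Littlewood--Sobolev inequality maps $I_1$ from $L^{p_0}$ into $L^{q_0}$ precisely when $1/q_0=1/p_0-1/2$ and $1<p_0<2$, which is exactly our hypothesis. This gives $\|u\|_{L^{q_0}}\leq C\|\o\|_{L^{p_0}}$ with a constant depending only on $p_0$.

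\emph{The $L^\infty$ bound.} Fix $x\in\R^2$ and split the integral in \eqref{eq:BS} into $|x-y|\leq1$ and $|x-y|>1$. On the near region, $|x-y|^{-1}$ is integrable on the unit ball, so this part is bounded by $C\|\o\|_{L^\infty}$. On the far region, H\"older's inequality gives a bound $\|\o\|_{L^{p_0}}\,\big\||z|^{-1}\mathds{1}_{|z|>1}\big\|_{L^{p_0'}}$. The last norm is finite because $p_0'>2$ when $p_0<2$, so $|z|^{-p_0'}$ is integrable at infinity in dimension two. Adding the two pieces gives $\|u\|_{L^\infty}\leq C\big(\|\o\|_{L^{p_0}}+\|\o\|_{L^\infty}\big)$.

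\emph{Justifying the formula.} We also need the integral to define the velocity field associated with $\o$. It converges absolutely by the splitting above. One then checks, for instance by density of Schwartz functions in $L^{p_0}\cap L^\infty$ together with the bounds just proved, that this $u$ is divergence free, satisfies $\curl u=\o$, and coincides with $-\nabla^\perp(-\Delta)^{-1}\o$. The only point requiring care is uniqueness of this representative, since adding a constant vector would preserve both $\div u=0$ and $\curl u=\o$. Membership in $L^{q_0}$ with $q_0<\infty$ excludes nonzero constants, so this is the natural choice and the statement holds for it.

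The main obstacle is the Hardy--Littlewood--Sobolev step, which is a classical result that I will quote rather than reprove. Everything else is elementary.
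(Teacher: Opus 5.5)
Your proof is correct and is the classical argument behind the references the paper cites in place of a proof: Hardy--Littlewood--Sobolev for the $L^{q_0}$ bound, and a near/far splitting of the kernel with H\"older (using $p_0'>2$) for the $L^\infty$ bound. One small slip is in your last paragraph: Schwartz functions are not dense in $L^\infty$, but you do not need that, because density in $L^{p_0}$ together with the $L^{p_0}\to L^{q_0}$ continuity from HLS already lets you pass to the limit in $\div u=0$, $\curl u=\omega$ and identify $u$ with $-\nabla^\perp(-\Delta)^{-1}\omega$.
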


Therefore, by applying Proposition \ref{p:BS} and using estimate \eqref{est:G-m-o_inf}, when $p=+\infty$ we deduce that 
\begin{align}
\label{est:u-Leb_inf}
\forall\,t\geq0\,,\quad
\left\|u(t)\right\|_{L^{q_0}\cap L^\infty}\,&\leq\,\Big(\left\|\o_0\right\|_{L^{p_0}\cap L^\infty}\,+\,\left\|m_0\right\|_{L^{p_0}\cap L^\infty}\Big)\,e^{C\,t}\,.
\end{align}

\begin{rmk} \label{r:u-Leb_p}
The same estimate \eqref{est:u-Leb_inf} holds true also in the case
\[
 1\,<\,p\,<\,2\,,
\]
by simply replacing $p_0$ by $p$ (notice that, by Besov embeddings, in that case $\o_0$ and $m_0$ both belong to $L^{p}\cap L^\infty$)
and $q_0$ by $q$, defined in a similar way \tsl{via} the formula $1/q\,:=\,1/p\,-\,1/2$.
\end{rmk}

\subsection{Estimates for the Besov regularity}
\label{ss:bes}

We now establish \tsl{a priori} estimates for the Besov norms of $u$ and $m$.
Recall that $u_0$ and $m_0$ are assumed to belong, respectively, to the spaces $B^s_{p,1}$ and $B^{s-1}_{p,1}$.
We will treat the two cases $p<+\infty$ and $p=+\infty$ together, with however some necessary specifications when $s=1$ (since we recall that $B^0_{p,1}$
is not an algebra).

We start by noticing that, after cutting into low and high frequencies and using the first Bernstein inequality of Lemma \ref{l:bern}, we can estimate
\begin{equation} \label{est:low-high}
 \left\|u\right\|_{B^s_{p,1}}\,\lesssim\,\left\|u\right\|_{L^p}\,+\,\left\|\o\right\|_{B^{s-1}_{p,1}}\,,
\end{equation}
where $p$ can either satisfy $1<p<+\infty$ (as in the framework of Theorem \ref{th:global-strong})
or be $p=+\infty$ (as in the setting of Theorem \ref{th:endpoint}).

As a consequence, we can perform estimates for the Besov norms of the solutions by working with $\o$ and $m$, both at $B^{s-1}_{p,1}$  level of regularity.
As a matter of fact, for the same reasons mentioned in Subsection \ref{ss:leb}, it is more convenient to work with the unknown
$\Gamma$ defined in \eqref{eq:def_G}, rather than working with $\o$.

Before proceeding, we point out that, in order to bound the $B^s_{p,1}$ norm of the solution, we could directly use Theorem \ref{th:transport}
for equation \eqref{eq:Gamma} and Theorem \ref{t:tr-diff} for the second equation in \eqref{eq:2D-microp}. However, we prefer here to
do all the estimates ``by hands'': in this way, we will obtain much more precise estimates, which lay the ground to the proof of the global well-posedness
(performed in Section \ref{s:existence} below).

\medbreak
In order to get the sought estimates, we are going to localise the equations for $m$ and $\Gamma$ in frequencies. For convenience, in the computations below
we adopt the following notation: for any $j\geq-1$, we set
\[
 f_j\,=\,\Delta_jf\,,\qquad\qquad \mbox{ for }\qquad f\,\in\,\big\{u,\o,m,\Gamma\big\}\,.
\]
In addition, for simplicity of presentation, hereafter we set $\alpha=\k=1$.

Applying the operator $\Delta_j$, for $j\geq-1$, to the equations for $\Gamma$ and $m$, we obtain
\begin{align*}
\d_t \Gamma_j\,+\,u\cdot\nabla \Gamma_j\,&=\,[u\cdot\nabla,\Delta_j]\Gamma\,+\,\Gamma_j\,-\,m_j\,, \\
\d_t m_j\,-\, \Delta m_j\,&=\,-\,\Delta_j\left( u\cdot\nabla m\right) +\,\Gamma_j\,-\,m_j\,. 
\end{align*}
Notice that both $\Gamma_j$ and $m_j$ are real-valued, because $\Gamma$ and $m$ are, and the functions involved in the dyadic partition of unity are radial.

We start by considering the equation for $\Gamma_j$.
We perform $L^p$ estimates on it, for $1<p\leq+\infty$, as we did to prove \eqref{est:Gamma} in Subsection \ref{ss:leb} above: for any $j\geq-1$
and any $t\geq0$, we find
\[
 \left\|\Gamma_j(t)\right\|_{L^p}\,\leq\,\left\|\Delta_j\Gamma_0\right\|_{L^p}\,+\,\int^t_0\Big(\left\|[u\cdot\nabla,\Delta_j]\Gamma\right\|_{L^p}\,+\,
 \left\|\Gamma_j\right\|_{L^p}\,+\,\left\|m_j\right\|_{L^p}\Big)\,\dd\t\,.
\]
Now, we multiply both sides of the previous relation by $2^{j(s-1)}$ and we perform a sum over $j\geq-1$: we obtain, for all $t\geq0$, the bound
\begin{align*} 
\left\|\Gamma(t)\right\|_{B^{s-1}_{p,1}}\,&\leq\,\left\|\Gamma_0\right\|_{B^{s-1}_{p,1}} \\
\nonumber
&\qquad \,+\,
\int^t_0\left(\sum_{j\geq-1}2^{j(s-1)}\,\big\|[u\cdot\nabla,\Delta_j]\Gamma\big\|_{L^p}\,+\,\left\|\Gamma\right\|_{B^{s-1}_{p,1}}\,+\,
\left\|m(t)\right\|_{B^{s-1}_{p,1}}\right)\,\dd\t\,.
\end{align*}
According to the second estimate in Lemma \ref{l:CommBCD}, we can bound
\begin{align}\label{est:G_Bes0}
 \sum_{j\geq-1}2^{j(s-1)}\,\big\|[u\cdot\nabla,\Delta_j]\Gamma\big\|_{L^p}\,&\lesssim\,\|\nabla u\|_{L^\infty}\,\|\Gamma\|_{B^{s-1}_{p,1}}\,+\,
\|\o\|_{B^{s-1}_{p,1}}\,\|\Gamma\|_{L^\infty} \\
&\lesssim\,\Big(\left\|\nabla u\right\|_{L^\infty}\,+\,\|\Gamma\|_{L^\infty}\Big)\,\|\Gamma\|_{B^{s-1}_{p,1}}\,+\,\|\Gamma\|_{L^\infty}\,\|m\|_{B^{s-1}_{p,1}}\,. \nonumber 
\end{align}
Let us introduce the energy functional
\begin{equation} \label{eq:def_E}
 E^s_p(t)\,:=\,\left\|u\right\|_{L^\infty_t(B^{s}_{p,1})}\,+\,\left\|m\right\|_{L^\infty_t(B^{s-1}_{p,1})\cap L^1_t(B^{s+1}_{p,1})}\,,
\end{equation}
where we agree that $E^s_p(0)$ denotes the same quantity, defined by using the initial datum $\big(u_0,m_0\big)$ instead of the functions $\big(u(t),m(t)\big)$.
Observe that, owing to \eqref{est:low-high}, for any $t\geq0$ one has
\[
 E^s_p(t)\,\approx\,
 \left\|u\right\|_{L^\infty_t(L^p)}\,+\,\left\|\o\right\|_{L^\infty_t(B^{s-1}_{p,1})}\,+\,\left\|m\right\|_{L^\infty_t(B^{s-1}_{p,1})\cap L^1_t(B^{s+1}_{p,1})}\,.
\]
Recall that $A\approx B$ means that there exists a universal constant $C>0$
(independent of the data, of the solution and of the time $t\geq 0$) such that $C^{-1}B\leq A\leq C B$.
In particular, keeping definition \eqref{eq:def_G} in mind, we have that
\[
 \left\|\Gamma(t)\right\|_{B^{s-1}_{p,1}}\,+\,\left\|m(t)\right\|_{B^{s-1}_{p,1}}\,\lesssim\,E^s_p(t)\,.
\]
With this notation at hand, 
in light of \eqref{est:Gamma} when $p<+\infty$, or $\eqref{est:G-m-o_inf}$ for $p=+\infty$, from \eqref{est:G_Bes0} we can bound
\begin{align} \label{est:G_Bes}
\left\|\Gamma(t)\right\|_{B^{s-1}_{p,1}}\,&\leq\,\left\|\o_0\right\|_{B^{s-1}_{p,1}}\,+\,\left\|m_0\right\|_{B^{s-1}_{p,1}} \\
\nonumber
&\qquad\qquad \,+\,
C\,\int^t_0\Big(1+\left\|\nabla u(\t)\right\|_{L^\infty}+\|\o(\t)\|_{L^\infty}+\|m(\t)\|_{L^\infty}\Big)\,E^s_p(\t)\,\dd\t\,.
\end{align}

Let us now switch our attention to the equation for $m$. By using Theorem \ref{t:tr-diff}, for any $1<p\leq +\infty$, the embedding
$\wtilde L^\infty_T(B^\s_{p,r})\,\hookrightarrow\,L^\infty_T(B^\s_{p,r})$ and the equality $\wtilde L^1_T(B^\s_{p,1}) \,\equiv \,
L^1_T(B^\s_{p,1})$, we immediately get, for all times $t\geq0$, the estimate
\begin{align*}
&\left\|m\right\|_{L^\infty_t(B^{s-1}_{p,1})}\,+\,\int^t_0\left\|m(\t)\right\|_{B^{s+1}_{p,1}} \\
&\qquad\qquad \lesssim\,\big(1+t\big)\, \left(\left\|m_0\right\|_{B^{s-1}_{p,1}}\,+\,
\int^t_0\left(\big\|u\cdot\nabla m\big\|_{B^{s-1}_{p,1}}\,+\,\left\|\Gamma\right\|_{B^{s-1}_{p,1}}\,+\,\left\|m(t)\right\|_{B^{s-1}_{p,1}}\right)\,\dd\t\right)\,.
\end{align*}
Next, we treat the term $u\cdot\nabla m$ appearing on the right-hand side of the previous inequality. For this, we use the Bony paraproduct decomposition
\eqref{eq:bony}. In fact, in order to have a unified approach which works also for the case $s=1$ (as it may happen in the framework of Theorem \ref{th:endpoint}),
it is more convenient to use the divergence-free condition over $u$ and write
\[
 u\cdot\nabla m\,=\,\mc T_u\nabla m\,+\,\mc T_{\nabla m}u\,+\,\sum_{j=1,2}\d_j\mc R\big(u^j,m\big)\,.
\]
By using the continuity properties of $\mc T$ and $\mc R$ stated in Proposition \ref{p:op}, we can bound
\begin{align*}
 \left\|\mc T_u\nabla m\right\|_{B^{s-1}_{p,1}}\,&\lesssim\,\|u\|_{L^\infty}\,\left\|\nabla m\right\|_{B^{s-1}_{p,1}}\,\lesssim\,
\|u\|_{L^\infty}\,\left\|m\right\|_{B^{s}_{p,1}}, \\
\left\|\d_j\mc R\big(u^j,m\big)\right\|_{B^{s-1}_{p,1}}\,&\lesssim\,\left\|\mc R\big(u^j,m\big)\right\|_{B^{s}_{p,1}}\,\lesssim\,
\|u\|_{B^s_{p,1}}\,\|m\|_{L^\infty}, \\
\left\|\mc T_{\nabla m}u\right\|_{B^{s-1}_{p,1}}\,&\lesssim\,\|\nabla m\|_{B^{-1}_{\infty,\infty}}\,\left\|\nabla u\right\|_{B^{s-1}_{p,1}}\,\lesssim\,
\|m\|_{L^\infty}\,\left\|u\right\|_{B^{s}_{p,1}}\,.
\end{align*}
Hence, from the above bound we deduce, for a suitable universal constant $C>0$, the inequality
\begin{align*}
&\left\|m\right\|_{L^\infty_t(B^{s-1}_{p,1})}\,+\,\int^t_0\left\|m(\t)\right\|_{B^{s+1}_{p,1}} \\
&\quad \leq\,C\,\big(1+t\big)\, \left(\left\|m_0\right\|_{B^{s-1}_{p,1}}\,+\,
\int^t_0\Big(1+\|m\|_{L^\infty}\Big)\,E^s_p\,\dd\t\right)\,+\,
C\,(1+t)\,\int^t_0\|u\|_{L^\infty}\,\left\|m\right\|_{B^{s}_{p,1}}\,\dd\t\,.
\end{align*}
We now focus on the last term appearing on the right-hand side. For it, we can use the interpolation inequality \eqref{EmbeddingBCD} together with Young inequality to write, for any $t\geq0$ fixed and for any $\delta\in\,]0,1[\,$, the bound
\begin{align*}
\|u\|_{L^\infty}\,\left\|m\right\|_{B^{s}_{p,1}}\,\lesssim\,C(\delta)\,(1+t)\,\|u\|_{L^\infty}^2\,\left\|m\right\|_{B^{s-1}_{p,1}}\,+\,
\frac{\delta}{1+t}\,\left\|m\right\|_{B^{s+1}_{p,1}}\,.
\end{align*}
Choosing $\delta>0$ small enough, we can thus absorb the higher order term into the left-hand side of the previous inequality. This finally yields,
for any $t\geq0$, the estimate
\begin{align}
\label{est:m-Bes}
&\left\|m\right\|_{L^\infty_t(B^{s-1}_{p,1})}\,+\,\int^t_0\left\|m(\t)\right\|_{B^{s+1}_{p,1}} \\
\nonumber
&\qquad\qquad \lesssim\,\big(1+t\big)^2\, \left(\left\|m_0\right\|_{B^{s-1}_{p,1}}\,+\,
\int^t_0\Big(1\,+\,\|u(\t)\|_{L^\infty}^2\,+\,\|m(\t)\|_{L^\infty}\Big)\,E^s_p(\t)\,\dd\t\right)\,.
\end{align}
In the end, we can combine the above estimates to get a convenient bound for the Besov norm of the solution $\big(u,m\big)$.
For this, let us divide our discussion into the two cases $1<p<+\infty$ (Theorem \ref{th:global-strong}) and $p=+\infty$ (Theorem \ref{th:endpoint}).

\paragraph*{The case $1<p<+\infty$.}
When $p<+\infty$ is finite, we put estimates
\eqref{est:low-high}, \eqref{est:u-Leb_p}, \eqref{est:G_Bes} and \eqref{est:m-Bes} all together.
We thus find that
\begin{align}
\label{est:Energy_p}
\forall\,t\geq0\,,\qquad E^s_p(t)\,&\lesssim\,\big(1+t\big)^2\,\left(E^s_p(0)\,+\,
\int^t_0\mc I(\t)\,E^s_p(\t)\,\dd\t\right)\,,
\end{align}
where we have defined the function
\begin{equation} \label{eq:def_I}
 \mc I(\t)\,:=\,1+\|\nabla u(\t)\|_{L^\infty}+\|u(\t)\|_{L^\infty}^2+\|\o(\t)\|_{L^\infty}+\|m(\t)\|_{L^\infty}\,.
\end{equation}

\paragraph*{The case $p=+\infty$.}
When $p=+\infty$, instead, we have to replace estimate \eqref{est:u-Leb_p} by \eqref{est:u-Leb_inf}. By combining it again with \eqref{est:low-high},
\eqref{est:G_Bes} and \eqref{est:m-Bes}, we obtain
\begin{align}
\label{est:Energy_inf}
\forall\,t\geq0\,,\qquad E^s_p(t)\,&\lesssim\,e^{C\,t}\,\left(E^s_p(0)\,+\,\left\|\o_0\right\|_{L^{p_0}}\,+\,\left\|m_0\right\|_{L^{p_0}}\,+\,
\int^t_0\mc I(\t)\,E^s_p(\t)\,\dd\t\right)\,,
\end{align}
where the function $\mc I$ is the same as above, defined in formula \eqref{eq:def_I}.

\paragraph*{}
With estimates \eqref{est:Energy_p} and \eqref{est:Energy_inf} at hand, it is a routine matter to establish uniform estimates for the Besov norms of
$u$ and $m$ in some finite time interval $[0,T]$. Let us present the argument in detail below.

First of all, we can use the embeddings $B^{s-1}_{p,1}(\R^2)\hookrightarrow L^\infty(\R^2)$ and $B^s_{p,1}(\R^2)\hookrightarrow W^{1,\infty}(\R^2)$, which
hold true under conditions \eqref{eq:AnnLInfty} and \eqref{cond:Lipschitz} respectively. Notice that these conditions are satisfied under the assumptions
of Theorems \ref{th:global-strong} and \ref{th:endpoint}. We can thus bound
\[
 \mc I(\t)\,\lesssim\,1\,+\,\big(E^s_p(\t)\big)^2\,.
\]
Inserting this inequality into \eqref{est:Energy_p} when $p<+\infty$, or into \eqref{est:Energy_inf} when $p=+\infty$, in turn we find
\begin{align} \label{est:Energy-tot}
\forall\,t\geq0\,,\qquad E^s_p(t)\,\lesssim\,f_p(t)\,\left(\mc N_p(0)\,+\,\int^t_0\Big(1\,+\,\big(E^s_p(\t)\big)^2\Big)\,E^s_p(\t)\,\dd\t\right)\,,
\end{align}
where, for convenience of notation, we have set 
\begin{align*}
& f_p(t)\,=\,(1+t)^2\quad \mbox{ and }\quad \mc N_p(0)\,=\,E^s_p(0)\qquad\qquad\qquad\qquad\qquad\qquad \mbox{ for }\qquad p\in\,]1,+\infty[\;, \\
& f_p(t)\,=\,e^{C\,t}\quad \mbox{ and }\quad \mc N_p(0)\,=\,E^s_p(0)\,+\,\left\|\o_0\right\|_{L^{p_0}}\,+\,\left\|m_0\right\|_{L^{p_0}}
\qquad\qquad \mbox{ for }\qquad p=+\infty\,.
\end{align*}

Next, let us define the time $T>0$ as
\[
 T\,:=\,\sup\left\{ t>0\quad \Big|\qquad \int^t_0\big(E^s_p(\t)\big)^2\,\dd\t\,\leq\,\log2\right\}\,.
\]
By time continuity of the solution with values in the respective Besov space, we see that $T$ is indeed well-defined and positive.
Without loss of generality, we can assume that $T<+\infty$. As a matter of fact, if $T=+\infty$ the same argument below applies in any time interval
$[0,T_0]$, for any $T_0>0$, and we can conclude in the same manner.

So, assume $T<+\infty$. Then, from \eqref{est:Energy-tot} we deduce that 
\[
\forall\,t\in[0,T]\,,\qquad  E^s_p(t)\,\lesssim\,f_p(T)\,\left(\mc N_p(0)\,+\,\int^t_0\Big(1\,+\,\big(E^s_p(\t)\big)^2\Big)\,E^s_p(\t)\,\dd\t\right)\,,
\]
and an application of the Gr\"onwall lemma implies that
\begin{equation} \label{est:Energy-unif}
 \forall\,t\in[0,T]\,,\qquad E^s_p(t)\,\leq\,C_T\,\mc N_p(0)\,,
\end{equation}
where the multiplicative constant $C_T>0$ does not depend on the solution nor on the initial datum, but it does depend on the time $T$.
More precisely, one has the explicit expression
\[
 C_T\,\approx\,f_p(T)\,\exp\Big(f_p(T)\,\big(T\,+\,\log2\big)\Big)\,,
\]
which could be used to find a first, rough lower bound (implicit, as depending on the function $f_p(t)$, though) for the lifespan of the solution. In fact,
we are going to see that the solution is global, at least under suitable assumptions, so finding this bound is not really relevant for us.
However, the important fact to be pointed out here is that $T$ can be bounded from below by a quantity which only depends on the fixed value of
$p\in\,]1,+\infty]$ and on the functional norms of the initial datum: there exists a decreasing positive function $F:\R_+\longrightarrow\R_+$,
with $F(0^+)=+\infty$ and $F(\z)\longrightarrow0$ for $\z\to+\infty$, such that
\begin{equation} \label{est:T-lower}
 T\,\geq\,F\big(\mc N_p(0)\big)\,.
\end{equation}

We conclude this part by formulating a remark on the regularity of the pressure function.

\begin{rmk} \label{r:pressure}
Similarly to the analysis of Section 3 of \cite{D_2010} (see also \cite{D:F} for the case $p=+\infty$), from an inspection of equation \eqref{eq:pressure}
one could further obtain regularity for the pressure gradient. More precisely, under the uniform bound \eqref{est:Energy-unif} one has
\[
 \nabla\Pi\,\in\,L^\infty\big([0,T];B^s_{p,1}\big)\,.
\]
In fact, since $u$ is continuous in time with values in the Besov space $B^s_{p,1}$, the pressure gradient inherits this property and it is also continuous in time
with values in the same space $B^s_{p,1}$.

However, this regularity property for $\nabla\Pi$ is mostly not needed in our analysis. We will only use it at the end of the existence proof (see \tsl{e.g.}
the end of Paragraph \ref{sss:convergence}), in order to establish the time continuity of the velocity field.
Therefore, for the sake of simplicity, we avoid to perform this analysis here.
\end{rmk}

\section{Proof of uniqueness} \label{s:uniqueness}
In this section, we prove the uniqueness part of Theorems \ref{th:global-strong} and \ref{th:endpoint}. This is a consequence of suitable stability
results, of independent interest, which we are going to state and prove in Subsection \ref{ss:stab}.
How to apply them to get uniqueness is postponed to Subsection \ref{ss:unique}.

In passing, let us mention the following important point. Uniqueness in Theorem \ref{th:endpoint} is actually a consequence of the well-posedness
result from \cite{F-FD}, which states global well-posedness of Yudovich-type solutions to system \eqref{eq:2D-microp}.
The same result applies also under the assumptions of Theorem \ref{th:global-strong}, whenever $p\in\,]1,2[\,$ (indeed, in that case one has
$\o_0,m_0\,\in\;L^{p}(\R^2)\cap L^\infty(\R^2)$, which is what is needed in \cite{F-FD}).
Actually, it would not be hard to adapt that argument to get uniqueness for the case $1<p<+\infty$ in some other special cases,
namely under additional assumptions guaranteeing the finite-energy condition $u\in L^\infty_T(L^2)$ (for instance, this can be reached if $p=2$, or
if $2< p\leq 4$, provided one additionally requires $m_0\in L^2$).

Despite the previous discussion, in this section we will seek general, and quantitative, stability results valid for all the cases $1<p\leq +\infty$.
Indeed, this stability result will be important not only for obtaining uniqueness, but also for the construction of a solution,
when proving strong convergence of a suitably constructed sequence of smooth approximate solutions to a true solution of equations \eqref{eq:2D-microp}.

\subsection{Stability results} \label{ss:stab}

Motivated by the previous discussion, here we state and prove general stability estimates for solutions to system \eqref{eq:2D-microp} at the level
of regularity of Theorems \ref{th:global-strong} and \ref{th:endpoint}. We split our analysis into two main cases:
on the one hand, the general case $p\in\,]1,+\infty[\,$, in which higher regularity for the vorticity is needed, and,
on the other hand, the case $u\in L^2$ (implied by suitable assumptions, either on $p$ or on the initial data), in which an argument similar
to the one used in \cite{F-FD} applies.

Let us start by considering a general case, with no restrictions on $p\in\,]1,+\infty[\,$, which nonetheless requires the rather stringent assumption
$\nabla\o\in L^1_T(L^\infty)$.
The first main result of this part is contained in the following statement.
\begin{prop} \label{p:stab}
 Let $\big(u_1,m_1\big)$ and $\big(u_2,m_2\big)$ be two weak solutions to system \eqref{eq:2D-microp}, defined on a common time interval $[0,T]$, for some $T>0$.
Assume that:
\begin{enumerate}[\rm (i)]
 \item the differences $\delta u\,:=\,u_1\,-\,u_2$ and $\delta m\,:=\,m_1\,-\,m_2$  belong to
 $L^\infty\big([0,T];W^{1,p}(\R^2)\big)\,\cap\,W^{1,\infty}\big([0,T];L^p(\R^2)\big)$, for some finite $p\in\,]1,+\infty[\,$;
\item $u_1$ belongs to $L^\infty\big([0,T]\times\R^2\big)$;
\item both $\nabla u_1$ and $\nabla u_2$ belong to $L^1\big([0,T];L^\infty(\R^2)\big)$;
 \item $\nabla m_2$ belongs to $L^r\big([0,T];L^\infty(\R^2)\big)$, for some $r\in\,]1,+\infty[\,$;
 \item for the same $r\in\,]1,+\infty[\,$ above, one has that $\de m(0)$ belongs to the homogeneous Besov space $\dot B^{2-\frac{2}{r}}_{p,r}$; 
 \item $\nabla\o_2$ belongs to $L^1\big([0,T];L^\infty(\R^2)\big)$.
\end{enumerate}

Then, there exists a small time $T_0\in\,]0,T]$ such that one has the following estimate:
\begin{align*}
 &\left\|\delta u\right\|_{L^\infty_{T_0}(W^{1,p})}\,+\,\left\|\delta m\right\|_{L^\infty_{T_0}(L^p)}\,+\,\left\|\Delta \de m\right\|_{L^1_{T_0}(L^p)} \\
 &\qquad\qquad\qquad\qquad\qquad
\leq\,C\, \Big(\left\|\delta u(0)\right\|_{W^{1,p}}\,+\,\left\|\delta m(0)\right\|_{L^p\cap \dot B^{2-\frac{2}{r}}_{p,r}}\Big)\,e^{C\,\mc J(T_0)}\,,
\end{align*}
where the function $\mc J(t)$ is defined as
\[
 \mc J(t)\,:=\,\int^t_0\Big(1\,+\,\left\|\nabla u_1(\t)\right\|_{L^\infty}\,+\,\left\|\nabla u_2(\t)\right\|_{L^\infty}\,+\,
\left\|\nabla \o_2(\t)\right\|_{L^\infty}\,+\,\left\|\nabla m_2(\t)\right\|_{L^\infty}\Big)\,\dd\t\,.
\]
\end{prop}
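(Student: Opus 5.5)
We write the equations satisfied by the differences $\delta u$, $\delta m$ and $\delta\o:=\curl(\delta u)$, with $\alpha=\k=1$. Subtracting the two systems gives
\begin{align*}
&\d_t\delta u+u_1\cdot\nabla\delta u+\nabla\delta\Pi=-\,\delta u\cdot\nabla u_2-\nabla^\perp\delta m\,,\\
&\d_t\delta m-\Delta\delta m=-\,u_1\cdot\nabla\delta m-\delta u\cdot\nabla m_2+\delta\o\,,\\
&\d_t\delta\o+u_1\cdot\nabla\delta\o=-\,\delta u\cdot\nabla\o_2-\Delta\delta m\,.
\end{align*}
The plan is to estimate $\|\delta u\|_{L^p}$, $\|\delta\o\|_{L^p}$ (which controls $\|\nabla\delta u\|_{L^p}$ by Calder\'on--Zygmund, since $1<p<+\infty$) and the parabolic quantity $\|\delta m\|_{L^\infty_t(L^p)}+\|\Delta\delta m\|_{L^1_t(L^p)}$. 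We then close the bounds by Gr\"onwall on a short interval, where a small factor in front of $\|\Delta\delta m\|_{L^1_t(L^p)}$ can be absorbed. Hypothesis (i) justifies all the $L^p$ energy computations below.

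\emph{Step 1 (velocity).} An $L^p$ estimate on the $\delta u$ equation uses $\div u_1=0$. The pressure satisfies $\nabla\delta\Pi=\nabla(-\Delta)^{-1}\div\big(u_1\cdot\nabla\delta u+\delta u\cdot\nabla u_2\big)$. Rewriting $u_1\cdot\nabla\delta u+\delta u\cdot\nabla u_2=\div(\delta u\otimes u_1)+\div(u_2\otimes\delta u)$ and using $\div\div(a\otimes b)=\nabla a:{}^t\nabla b$ for divergence-free fields, we get $\|\nabla\delta\Pi\|_{L^p}\lesssim(\|\nabla u_1\|_{L^\infty}+\|\nabla u_2\|_{L^\infty})\|\delta u\|_{L^p}$. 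Also $\|\nabla\delta m\|_{L^p}\lesssim\|\delta m\|_{L^p}^{1/2}\|\Delta\delta m\|_{L^p}^{1/2}$. This gives
\[
\|\delta u(t)\|_{L^p}\le\|\delta u(0)\|_{L^p}+C\int_0^t\big(\mc J'\,\|\delta u\|_{L^p}+\|\delta m\|_{L^p}\big)+\eta\|\Delta\delta m\|_{L^1_t(L^p)}\,.
\]

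\emph{Step 2 (vorticity).} Here lies the obstacle: the $\delta\o$ equation carries $\Delta\delta m$, which is exactly at the maximal-regularity level of $\delta m$, with constant $1$. So a Young absorption is impossible, and we only gain by taking $T_0$ small. The transport estimate, using (vi), gives
\[
\|\delta\o(t)\|_{L^p}\le\|\delta\o(0)\|_{L^p}+\int_0^t\|\nabla\o_2\|_{L^\infty}\|\delta u\|_{L^p}+\|\Delta\delta m\|_{L^1_t(L^p)}\,.
\]
One might try the combination $\delta\Gamma=\delta\o+\delta m$ to kill the Laplacian. However, the difference equation then produces $-\delta u\cdot\nabla m_2$ and $u_1\cdot\nabla\delta m$ terms mixing transport structures, so I would first try the direct approach with $T_0$ small, described in Step 3.

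\emph{Step 3 (microrotation and closure).} We apply $L^r_t(L^p)$ maximal regularity for the heat equation, with the initial datum in $\dot B^{2-2/r}_{p,r}$ (the trace space; this is why (v) appears). This gives
\begin{align*}
\|\Delta\delta m\|_{L^r_t(L^p)}\lesssim\|\delta m(0)\|_{\dot B^{2-2/r}_{p,r}}+\|u_1\cdot\nabla\delta m\|_{L^r_t(L^p)}+\|\delta u\cdot\nabla m_2\|_{L^r_t(L^p)}+\|\delta\o\|_{L^r_t(L^p)}\,.
\end{align*}
By (ii) and (iv) the right side is bounded by
\[
\|u_1\|_{L^\infty}\|\nabla\delta m\|_{L^r_t(L^p)}+\|\nabla m_2\|_{L^r_t(L^\infty)}\|\delta u\|_{L^\infty_t(L^p)}+t^{1/r}\|\delta\o\|_{L^\infty_t(L^p)}\,,
\]
with the gradient term interpolated and absorbed. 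H\"older in time then yields
\[
\|\Delta\delta m\|_{L^1_t(L^p)}\le t^{1-1/r}\|\Delta\delta m\|_{L^r_t(L^p)}\,,
\]
a factor that is small for small $t$; the requirement $r>1$ is precisely what produces it. Let $X(t)$ be the sum of all norms on the left side of the statement. Inserting this into Steps 1--2 gives
\[
X(t)\le C\,X(0)+C\int_0^t\mc J'X+C\,t^{1-1/r}\big(1+\|\nabla m_2\|_{L^r_T(L^\infty)}\big)X(t)\,.
\]
Choosing $T_0$ so that the last coefficient is at most $1/2$ and applying Gr\"onwall gives the claimed bound with $e^{C\mc J(T_0)}$.
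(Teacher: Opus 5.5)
Your proof is correct. It follows the paper's argument except in how you control $\de\o$.

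\textbf{The paper's treatment of $\de\o$.} The paper never uses the equation for $\de\o$ directly. It works with $\de\Gamma:=\Gamma_1-\Gamma_2$, which solves $\d_t\de\Gamma+u_1\cdot\nabla\de\Gamma+\de u\cdot\nabla\Gamma_2=\de\o$, an equation with no second-order term. It then recovers $\|\de\o\|_{L^p}\leq\|\de\Gamma\|_{L^p}+\|\de m\|_{L^p}$ from the direct $L^p$ bound \eqref{est:de-m} on $\de m$. This gives \eqref{est:de-o}, in which only $\int_0^t\|\de\o\|_{L^p}$ and $\int_0^t(\|\nabla\o_2\|_{L^\infty}+\|\nabla m_2\|_{L^\infty})\|\de u\|_{L^p}$ appear.

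\textbf{Your objection to $\de\Gamma$ does not hold.} The term $u_1\cdot\nabla\de m$ combines with $u_1\cdot\nabla\de\o$ into the transport term $u_1\cdot\nabla\de\Gamma$, which is harmless in $L^p$ since $\div u_1=0$. The term $\de u\cdot\nabla m_2$ combines with $\de u\cdot\nabla\o_2$ into $\de u\cdot\nabla\Gamma_2$, bounded by $(\|\nabla\o_2\|_{L^\infty}+\|\nabla m_2\|_{L^\infty})\|\de u\|_{L^p}$. That is exactly why $\|\nabla m_2\|_{L^\infty}$ appears in $\mc J$.

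\textbf{Your route.} You keep the coefficient-one term $\|\Delta\de m\|_{L^1_t(L^p)}$ in the vorticity bound and beat it with the factor $t^{1-1/r}$ from $L^r_t(L^p)$ maximal regularity. This is legitimate and costs nothing in the final statement. The paper also needs a small $T_0$, to absorb $t^{1-1/r}\big(\|\de u\|_{L^\infty_t(L^p)}+\|\de\o\|_{L^r_t(L^p)}+\|\de m\|_{L^r_t(L^p)}\big)$ in \eqref{est:de-u-m}, whose constants involve $\|u_1\|_{L^\infty}$ and $\|\nabla m_2\|_{L^r_T(L^\infty)}$. So in both arguments $T_0$ depends on the same quantities.

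\textbf{What each approach buys.} The $\Gamma$ trick gives a bound for $\de\o$ that does not rely on parabolic smoothing at all. It is the same mechanism the paper uses for its a priori bounds in Section \ref{s:a-priori}, where at $p=+\infty$ no maximal regularity is available. Your version avoids the auxiliary unknown, but ties the vorticity bound to the maximal regularity estimate.

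\textbf{Details to fix.}
\begin{enumerate}
\item You never state the $L^\infty_t(L^p)$ bound on $\de m$ itself. A direct $L^p$ estimate as in \eqref{est:de-m}, or the $\d_t\de m$ part of maximal regularity, provides it.
\item Your $X(0)$ must also contain $\|\de m(0)\|_{\dot B^{2-2/r}_{p,r}}$.
\item The absorbed gradient term adds a harmless $t\,\|u_1\|^2_{L^\infty}$ to the smallness coefficient.
\item For the pressure, the non-divergence form $\nabla\de u:{}^t\nabla u_1$ does not by itself give a bound in terms of $\|\de u\|_{L^p}$. Rewrite it, using $\div u_1=\div\de u=0$, as $\d_j\big(\de u^k\,\d_k u_1^j\big)$, so that the order-zero operator $\nabla(-\Delta)^{-1}\div$ acts on an $L^p$ field.
\end{enumerate}
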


\begin{proof}
 We proceed in several steps. To begin with, we write down the equation for $\de u$: after denoting $\nabla\de\Pi\,:=\,\nabla\Pi_1\,-\,\nabla\Pi_2$,
we can write
\begin{equation} \label{eq:de-u}
 \d_t\de u\,+\,(u_1\cdot\nabla)\de u\,+\,(\de u\cdot\nabla)u_2\,+\,\nabla\de\Pi\,=\,-\,\nabla^\perp\de m\,
\end{equation}
where, for convenience, we have adopted the same convention as in Subsection \ref{ss:bes} and set $\alpha=\k=1$.
We now perform a $L^p$ estimate for $\delta u$ and get, for any $t\in[0,T]$, the bound
\[
 \left\|\delta u(t)\right\|_{L^p}\,\lesssim\,\left\|\delta u(0)\right\|_{L^p}\,+\,\int^t_0\Big(\left\|\nabla\delta\Pi\right\|_{L^p}\,+\,
\left\|\nabla\delta m\right\|_{L^p}\,+\,\left\|\nabla u_2\right\|_{L^\infty}\,\left\|\de u\right\|_{L^p}\Big)\,\dd\t\,.
\]

Next, similarly to what done in Subsection \ref{ss:leb}, we derive the following formula:
\[
 \nabla\de\Pi\,=\,\nabla(-\Delta)^{-1}\div\Big((u_1\cdot\nabla)\de u\,+\,(\de u\cdot\nabla)u_2\Big)\,.
\]
Observing that
\begin{align*}
 \div\Big((u_1\cdot\nabla)\de u\Big)\,&=\,\sum_{j,k=1,2}\d_k\Big(u_1^j\,\d_j\de u^k\Big)\,=\,\sum_{j,k=1,2}\d_j\Big(\de u^k\,\d_ku_1^j\Big)\,,
\end{align*}
which holds true thanks to the divergence-free condition satisfied by both $u_1$ and $\de u$, by Calder\'on-Zygmund theory we can bound
\[
 \left\|\nabla\delta\Pi\right\|_{L^p}\,\lesssim\,\Big(\left\|\nabla u_1\right\|_{L^\infty}\,+\,\left\|\nabla u_2\right\|_{L^\infty}\Big)\,
\left\|\de u\right\|_{L^p}\,.
\]
Thus, after using interpolation and the Young inequality for treating the term $\nabla\de m$, we infer that, for any $t\in[0,T]$, there holds
\begin{align}
\label{est:de-u}
 & \left\|\delta u(t)\right\|_{L^p}\,\leq\,C\,\left\|\delta u(0)\right\|_{L^p}\,+\,
C\int^t_0\left\|\delta m(\t)\right\|_{L^p}\,\dd\t\,+\,\veps\int^t_0\left\|\Delta\delta m(\t)\right\|_{L^p}\,\dd\t \\
\nonumber
& \qquad\qquad\qquad\qquad\qquad\qquad
+\,C\int^t_0\Big(\left\|\nabla u_1(\t)\right\|_{L^\infty}\,+\,\left\|\nabla u_2(\t)\right\|_{L^\infty}\Big)\,\left\|\de u(\t)\right\|_{L^p}\,\dd\t\,,
\end{align}
where the parameter $\veps\in\,]0,1[\,$, so far arbitrary, will be chosen small enough later on. Of course, the multiplicative constant $C>0$ appearing
in the previous estimate also depends on such $\veps$.

Next, we write down the equation for $\de m$: we have
\begin{equation} \label{eq:de-m}
\d_t\de m\,+\,u_1\cdot\nabla \de m\,+\,\de u\cdot\nabla m_2\,-\,\Delta\de m\,=\,\de\o\,,
\end{equation}
where $\de\o\,:=\,\o_1\,-\,\o_2\,=\,\curl\de u$ denotes the vorticity associated to the ``difference velocity'' $\de u$.
Similarly to \eqref{est:m-q}, a direct $L^p$ estimate for it yields, for any $t\in[0,T]$, the bound
\begin{align}
\label{est:de-m}
\left\|\delta m(t)\right\|_{L^p}\,\lesssim\,\left\|\delta m(0)\right\|_{L^p}\,+\,\int^t_0\Big(\left\|\de\o(\t)\right\|_{L^p}\,+\,
\left\|\nabla m_2(\t)\right\|_{L^\infty}\,\left\|\de u(\t)\right\|_{L^p}\Big)\,\dd\t\,.
\end{align}
On the other hand, performing a maximal regularity estimate for it (see \tsl{e.g.} Lemma 7.3 of \cite{LR}), we discover that,
for $r\in\,]1,+\infty[\,$ as in assumption (iv) of the statement and for any $t\in[0,T]$, one has
\begin{align*}
\left\|\Delta \de m\right\|_{L^r_t(L^p)}\,\lesssim\,\left\|\de m(0)\right\|_{\dot B^{2-\frac{2}{r}}_{p,r}}\,+\,
\left\|\de\o\right\|_{L^r_t(L^p)}\,+\,\left\|u_1\cdot\nabla\de m\right\|_{L^r_t(L^p)}\,+\,
\left\|\de u\cdot\nabla m_2\right\|_{L^r_t(L^p)}\,.
\end{align*}
where the homogeneous Besov norm of the initial datum arises as a consequence of Theorem 2.34 of \cite{BCD}.
By using the following bounds,
\begin{align*}
 \left\|\de u\cdot\nabla m_2\right\|_{L^r_t(L^p)}\,&\lesssim\,\left\|\de u\right\|_{L^\infty_t(L^p)}\,\left\|\nabla m_2\right\|_{L^r_t(L^\infty)}\,, \\
 \left\|u_1\cdot\nabla\de m\right\|_{L^r_t(L^p)}\,&\lesssim\,\left\|u_1\right\|_{L^\infty_t(L^\infty)}\,\left\|\nabla\de m\right\|_{L^r_t(L^p)} \\
&\leq\,C_\theta\,\left\|u_1\right\|_{L^\infty_t(L^\infty)}^2\,\left\|\de m\right\|_{L^r_t(L^p)}\,+\,\theta\,\left\|\Delta\de m\right\|_{L^r_t(L^p)}\,,
\end{align*}
where $\theta>0$ can be chosen arbitrarily small, we finally arrive to the inequality
\begin{align}
\label{est:Delta-de-m}
\left\|\Delta \de m\right\|_{L^r_t(L^p)}\,&\lesssim\,\left\|\de m(0)\right\|_{\dot B^{2-\frac{2}{r}}_{p,r}}\,+\,
\left\|\de\o\right\|_{L^r_t(L^p)} \\
\nonumber
&\qquad\qquad +\,\left\|\de u\right\|_{L^\infty_t(L^p)}\,\left\|\nabla m_2\right\|_{L^r_t(L^\infty)}\,+\,
\left\|u_1\right\|_{L^\infty_t(L^\infty)}^2\,\left\|\de m\right\|_{L^r_t(L^p)}\,.
\end{align}

Observe that
\[
 \left\|\Delta \de m\right\|_{L^1_t(L^p)}\,\leq\,t^{1-\frac{1}{r}}\,\left\|\Delta \de m\right\|_{L^r_t(L^p)}\,.
\]
Hence, summing up inequalities \eqref{est:de-u}, \eqref{est:de-m} and \eqref{est:Delta-de-m}, taking $\veps>0$ small enough and making
use of assumptions (ii) and (iv) from the statement, we arrive at
\begin{align}
\label{est:de-u-m}
&\left\|\delta u\right\|_{L^\infty_t(L^p)}\,+\,\left\|\delta m\right\|_{L^\infty_t(L^p)}\,+\,\left\|\Delta \de m\right\|_{L^1_t(L^p)} \\
\nonumber
&\quad \lesssim\,
\left\|\delta u(0)\right\|_{L^p}\,+\,\left\|\delta m(0)\right\|_{L^p\cap \dot B^{2-\frac{2}{r}}_{p,r}}\,+\,
\int^t_0\left\|\delta m(\t)\right\|_{L^p}\,\dd\t\,+\,\int^t_0\left\|\delta \o(\t)\right\|_{L^p}\,\dd\t \\
\nonumber
&\quad\qquad\qquad
+\,\int^t_0\Big(\left\|\nabla u_1(\t)\right\|_{L^\infty}+\left\|\nabla u_2(\t)\right\|_{L^\infty}+\left\|\nabla m_2(\t)\right\|_{L^\infty}\Big)\,
\left\|\de u(\t)\right\|_{L^p}\,\dd\t \\
\nonumber
&\quad\qquad\qquad\qquad\qquad\qquad\qquad\qquad
+\,t^{1-\frac{1}{r}}\,\Big(\left\|\de u\right\|_{L^\infty_t(L^p)}\,+\,\left\|\de\o\right\|_{L^r_t(L^p)}\,+\,\left\|\de m\right\|_{L^r_t(L^p)}\Big)\,.
\end{align}
It is then clear that we can close the stability estimates on some small time interval $[0,T_0]$, provided we establish a suitable bound for the term $\de\o$:
this is our next goal.

Once again, for controlling the difference of the vorticities, we will pass through the auxiliary unknown $\de\Gamma\,:=\,\Gamma_1\,-\,\Gamma_2$, where each
$\Gamma_j$ is defined according to \eqref{eq:def_G}. The equation for $\de\Gamma$ reads
\[
 \d_t\de \Gamma\,+\,u_1\cdot\nabla\de\Gamma\,+\,\de u\cdot\nabla\Gamma_2\,=\,\de\o\,.
\]
Then, a $L^p$ estimate yields the analogue of estimate \eqref{est:de-m}:
\begin{align*}
\left\|\delta \Gamma(t)\right\|_{L^p}\,\lesssim\,\left\|\delta \Gamma(0)\right\|_{L^p}\,+\,\int^t_0\Big(\left\|\de\o(\t)\right\|_{L^p}\,+\,
\left\|\nabla \Gamma_2(\t)\right\|_{L^\infty}\,\left\|\de u(\t)\right\|_{L^p}\Big)\,\dd\t\,,
\end{align*}
for any time $t\in[0,T]$. Thus, combining the previous inequality with \eqref{est:de-m}, we finally find
\begin{align}
\label{est:de-o}
\left\|\delta \o(t)\right\|_{L^p}\,&\lesssim\,\left\|\delta \o(0)\right\|_{L^p}\,+\,\left\|\delta m(0)\right\|_{L^p}\,+\,
\int^t_0\left\|\de\o(\t)\right\|_{L^p}\,\dd\t \\
\nonumber
&\qquad\qquad\qquad +\,
\int^t_0\Big(\left\|\nabla \o_2(\t)\right\|_{L^\infty}+\left\|\nabla m_2(\t)\right\|_{L^\infty}\Big)\,\left\|\de u(\t)\right\|_{L^p}\,\dd\t\,.
\end{align}
At this point, summing up estimates \eqref{est:de-u-m} and \eqref{est:de-o} yields the sought bound.
\end{proof}

Before going on, let us formulate a remark on the assumption $\de m(0)\in\dot B^{2-\frac{2}{r}}_{p,r}$.
\begin{rmk} \label{r:hom-B}
Since $\de m(0)\in W^{1,p}(\R^2)$ by assumption (i), there is no problem in the control of the low frequencies of $\de m(0)$ and one actually gets that
assumption (v) is implied by the non-homogeneous condition $\de m(0)\in B^{s-\frac{2}{r}}_{p,r}$.

Next, observe that, for solutions as in Theorem \ref{th:global-strong}, by interpolation one gets $m\in L^2_T(B^{s}_{p,1})$, which implies by embeddings that
assumption (iv) holds true with $r=2$. Thus, verifying assumption (v) requires to prove $\de m(0)\in B^{1}_{p,2}$.

Now, owing to the stringent assumption (vi), we will be able to apply this stability estimate only when $s\geq 2+2/p$, for which $s-1\geq 1+2/p$.
Thus, in our cases of interest, we will get $m_0\in B^1_{p,2}$ for free simply by embeddings. In other words, assumption (v) is compatible with
the statement of Theorem \ref{th:global-strong}.
\end{rmk}

We also state the counterpart of the above Proposition \ref{p:stab} in the case when we perform a stability estimate in $L^2$.
This will be useful to get uniqueness under less stringent regularity assumptions (that is, lower values of $s$), for some special cases.
We refer to Subsection \ref{ss:unique} below for more details.

\begin{prop} \label{p:stab-energy}
 Let $\big(u_1,m_1\big)$ and $\big(u_2,m_2\big)$ be two weak solutions to system \eqref{eq:2D-microp}, defined on a common time interval $[0,T]$, for some $T>0$.
Assume that:
\begin{enumerate}[\rm (i)]
 \item the differences $\delta u\,:=\,u_1\,-\,u_2$ and $\delta m\,:=\,m_1\,-\,m_2$  belong to
 $W^{1,2}\big([0,T];L^2(\R^2)\big)$;
\item $\nabla u_2$ and $\nabla m_2$ both belong to $L^1\big([0,T];L^\infty(\R^2)\big)$.
\end{enumerate}

Then, the following estimate holds true, for any $t\in[0,T]$:
\begin{align*}
 &\left\|\delta u(t)\right\|^2_{L^2}\,+\,\left\|\delta m(t)\right\|^2_{L^2}\,+\,\int^t_0\left\|\nabla\de m(\t)\right\|^2_{L^2}\,\dd\t\,
\leq\,C\, \Big(\left\|\delta u(0)\right\|^2_{L^2}\,+\,\left\|\delta m(0)\right\|^2_{L^2}\Big)\,e^{C\,\wtilde{\mc J}(t)}\,,
\end{align*}
where the function $\wtilde{\mc J}(t)$ is defined as
\[
\wtilde{ \mc J}(t)\,:=\,\int^t_0\Big(1\,+\,\left\|\nabla u_2(\t)\right\|_{L^\infty}\,+\,\left\|\nabla m_2(\t)\right\|_{L^\infty}\Big)\,\dd\t\,.
\]

\end{prop}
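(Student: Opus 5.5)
The plan is a direct $L^2$ energy estimate on the system satisfied by $(\de u,\de m)$. This is legitimate under assumption (i), which makes $t\mapsto\|\de u(t)\|_{L^2}^2$ and $t\mapsto\|\de m(t)\|_{L^2}^2$ absolutely continuous. We keep $\alpha=\k=1$.

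We start from equation \eqref{eq:de-u}. Rather than the version used in Proposition \ref{p:stab}, we write it with $u_2$ as the transporting field for the difference:
\[
\d_t\de u\,+\,(u_1\cdot\nabla)\de u\,+\,(\de u\cdot\nabla)u_2\,+\,\nabla\de\Pi\,=\,-\,\nabla^\perp\de m\,.
\]
We take the $L^2$ scalar product with $\de u$. The pressure term vanishes because $\div\de u=0$, and the transport term $\int(u_1\cdot\nabla)\de u\cdot\de u$ vanishes because $\div u_1=0$. The stretching-type term is bounded by $\|\nabla u_2\|_{L^\infty}\|\de u\|_{L^2}^2$. For the coupling term we integrate by parts:
\[
-\int\nabla^\perp\de m\cdot\de u\,=\,\int\de m\,\de\o\,.
\]
We handle it together with the corresponding term in the $\de m$ equation, as explained next.

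For $\de m$ we use the form
\[
\d_t\de m\,+\,u_1\cdot\nabla\de m\,+\,\de u\cdot\nabla m_2\,-\,\Delta\de m\,=\,\de\o\,,
\]
and take the $L^2$ scalar product with $\de m$. The transport term again drops since $\div u_1=0$, and the diffusion produces $\|\nabla\de m\|_{L^2}^2$. The term $\int(\de u\cdot\nabla m_2)\,\de m$ is bounded by $\|\nabla m_2\|_{L^\infty}\big(\|\de u\|_{L^2}^2+\|\de m\|_{L^2}^2\big)$. For the forcing we integrate by parts, writing $\int\de\o\,\de m=-\int\de u\cdot\nabla^\perp\de m$. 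Young's inequality then gives the bound $\tfrac14\|\nabla\de m\|^2_{L^2}+C\|\de u\|^2_{L^2}$.

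The coupling term from the $\de u$ equation, $\int\de m\,\de\o=-\int\de u\cdot\nabla^\perp\de m$, is treated in exactly the same way. This is precisely the computation already used to obtain \eqref{est:u-m-energy}.

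Summing the two identities and absorbing the gradient terms into the dissipation gives
\[
\frac{\dd}{\dd t}\Big(\|\de u\|^2_{L^2}+\|\de m\|^2_{L^2}\Big)\,+\,\|\nabla\de m\|^2_{L^2}\,\lesssim\,\Big(1+\|\nabla u_2\|_{L^\infty}+\|\nabla m_2\|_{L^\infty}\Big)\Big(\|\de u\|^2_{L^2}+\|\de m\|^2_{L^2}\Big)\,.
\]
The Gr\"onwall lemma then yields the claimed estimate with $\wtilde{\mc J}$. Assumption (ii) guarantees that $\wtilde{\mc J}$ is finite.

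The main point requiring care is justification rather than computation. The solutions are only weak, so one must make sense of the pairings and of the cancellations $\int(u_1\cdot\nabla)\de u\cdot\de u=0$ and $\int u_1\cdot\nabla\de m\,\de m=0$. One must also check that $\nabla\de m\in L^2_t(L^2)$ is available. I would first establish the identities for regularised differences, by mollifying in space, and then pass to the limit. For the convective terms this requires a Friedrichs-type commutator argument, using that the Lipschitz field in the stretching term is $u_2$. Assumption (i) provides the time regularity needed to differentiate the $L^2$ norms.
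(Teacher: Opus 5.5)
Your proposal is correct and is essentially the paper's proof: $L^2$ energy estimates on \eqref{eq:de-u} (pressure eliminated by $\div\de u=0$, stretching term controlled by $\|\nabla u_2\|_{L^\infty}$) and on \eqref{eq:de-m} (the $\de\o$ forcing moved onto $\nabla\de m$ by integration by parts and absorbed via Young), then summation and Gr\"onwall, with assumption (i) used only to differentiate the $L^2$ norms in time. One small inconsistency: you announce ``$u_2$ as the transporting field'', but the (correct) equation you display has $u_1$ transporting and $u_2$ in the stretching term, which is exactly what assumption (ii) requires; consequently, a Friedrichs commutator argument for the convective terms $(u_1\cdot\nabla)\de u$ and $u_1\cdot\nabla\de m$ would need regularity of $u_1$, not $u_2$, a point the paper also takes for granted (in its applications both velocities are Lipschitz).
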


\begin{proof}
We only need to repeat the computations performed in the proof of Proposition \ref{p:stab}, but restricting to the case $p=2$.
First of all, an energy estimate for equation \eqref{eq:de-u}, which is well-justified under assumption (i) (see \tsl{e.g.} Chapter 5 of \cite{Evans}),
yields, for any $t\in[0,T]$, the bound
\begin{align*}
\left\|\de u(t)\right\|_{L^2}^2\,\lesssim\,\left\|\de u(0)\right\|_{L^2}^2\,+\,\int^t_0\Big(1\,+\,\left\|\nabla u_2(\t)\right\|_{L^\infty}\Big)\,
\left\|\de u(\t)\right\|_{L^2}^2\,\dd\t\,+\,\int^t_0\left\|\nabla\de m(\t)\right\|_{L^2}^2\,\dd\t\,,
\end{align*}
where we have also used the $L^2$-orthogonality between the pressure gradient and the divergence-free vector field $\de u$.

Next, from a standard energy estimate for equation \eqref{eq:de-m}, we infer that, for any $t\in[0,T]$, one has
the bound
\begin{align*}
&\left\|\de m(t)\right\|_{L^2}^2\,+\,\int^t_0\left\|\nabla\de m(\t)\right\|_{L^2}^2\,\dd\t \\
&\qquad\qquad\qquad
\lesssim \,
\left\|\de m(0)\right\|_{L^2}^2\,+\,\int^t_0\Big(\left\|\de u(\t)\right\|_{L^2}^2\,+\,\left\|\nabla m_2\right\|_{L^\infty}
\left\|\de u(\t)\right\|_{L^2}\,\left\|\de m(\t)\right\|_{L^2}\Big)\,\dd\t\,,
\end{align*}
where we have also used an integration by parts in order to pass the derivative from $\de\o$ to $\de m$, together with the Young inequality
to absorb the term $\nabla\de m$, arising from the previous computation, into the left-hand side. The computations are analogous to the ones
performed in Subsection \ref{ss:leb}, to arrive to inequality \eqref{est:u-m-energy}.

Summing up the previous bounds immediately yields the claimed inequality.
\end{proof}

\subsection{Uniqueness of solutions} \label{ss:unique}
We now apply the stability results of the previous subsection in order to deduce uniqueness of solutions to system \eqref{eq:2D-microp},
as claimed in Theorems \ref{th:global-strong} and \ref{th:endpoint}.

\subsubsection{The high regularity case} \label{sss:unique-general}

We discuss here the general case, in which we simply assume that the initial data have high regularity. Namely, we will suppose the index
$s\geq1+2/p$ in Theorem \ref{th:global-strong} to have a larger value, namely $s\geq 2+2/p$.

\begin{prop} \label{p:uni-gen}
Let $p\in\,]1,+\infty[\,$ and let $s\,\geq\,2\,\big(1\,+\,1/p\big)$. Take an initial datum $\big(u_0,m_0\big)$ such that
\[
 \div u_0\,=\,0\,,\qquad u_0\,\in\,B^s_{p,1}(\R^2)\qquad \mbox{ and }\qquad m_0\,\in\,B^{s-1}_{p,1}(\R^2)\,.
\]

Then there exists at most one solution $\big(u,m\big)$ to \eqref{eq:2D-microp} associated to this initial datum and defined on some time interval
$[0,T]$, with $T>0$, such that
\[
 u\,\in\,\mc C\big([0,T];B^s_{p,1}(\R^2)\big)\qquad \mbox{ and }\qquad m\,\in\,\mc C\big([0,T];B^{s-1}_{p,1}(\R^2)\big)\,\cap\,L^1\big([0,T];B^{s+1}_{p,1}(\R^2)\big)\,.
\]
\end{prop}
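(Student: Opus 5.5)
The plan is to apply Proposition \ref{p:stab} to two solutions $(u_1,m_1)$ and $(u_2,m_2)$ in the stated class with the same initial datum, so that $\de u(0)=0$ and $\de m(0)=0$. The stability estimate then gives $\de u\equiv0$ and $\de m\equiv 0$ on a short interval $[0,T_0]$. Iterating in time propagates this to the whole interval $[0,T]$. Concretely, let $T^*$ be the supremum of the times up to which the two solutions coincide. If $T^*<T$, apply Proposition \ref{p:stab} again from time $T^*$, which is legitimate by time continuity; this gives a contradiction.

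The substance is checking hypotheses (i)--(vi) for the class $u\in\mc C([0,T];B^s_{p,1})$, $m\in \mc C([0,T];B^{s-1}_{p,1})\cap L^1_T(B^{s+1}_{p,1})$ with $s\geq 2+2/p$.

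\textbf{Hypotheses (ii), (iii), (vi).} Since $s>1+2/p$, the embedding $B^s_{p,1}\hookrightarrow W^{1,\infty}$ gives (ii) and (iii). Since $s-1\geq 1+2/p$, we have $\o_i\in \mc C([0,T];B^{s-1}_{p,1})\hookrightarrow \mc C([0,T];W^{1,\infty})$, which gives (vi). This is exactly where the threshold $s\geq2(1+1/p)$ enters.

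\textbf{Hypotheses (iv), (v).} Interpolating between $L^\infty_T(B^{s-1}_{p,1})$ and $L^1_T(B^{s+1}_{p,1})$ via \eqref{est:CL-interp} yields $m_2\in L^2_T(B^s_{p,1})$. Then $\nabla m_2\in L^2_T(B^{s-1}_{p,1})\subset L^2_T(L^\infty)$, so (iv) holds with $r=2$. Hypothesis (v) is trivial because $\de m(0)=0$; in any case it is compatible by Remark \ref{r:hom-B}.

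\textbf{Hypothesis (i).} This is the only step requiring care. Spatial regularity is clear: $\de u\in L^\infty_T(B^s_{p,1})\subset L^\infty_T(W^{1,p})$, and likewise $\de m\in L^\infty_T(B^{s-1}_{p,1})\subset L^\infty_T(W^{1,p})$ since $s-1>1$. For the time derivatives I would read them off the equations.
\begin{itemize}
\item For $u$: $\d_t u=-(u\cdot\nabla)u-\nabla\Pi-\nabla^\perp m$. Here $(u\cdot\nabla)u\in L^\infty_T(L^p)$ because $u\in L^\infty$ and $\nabla u\in L^p$. By \eqref{est:pressure}, $\nabla\Pi\in L^\infty_T(L^p)$. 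Also $\nabla m\in L^\infty_T(B^{s-2}_{p,1})\subset L^\infty_T(L^p)$ because $s-2\geq 2/p>0$.
\item For $m$: $\d_t m=\Delta m-u\cdot\nabla m+\o$. The terms $u\cdot\nabla m$ and $\o$ lie in $L^\infty_T(L^p)$. The term $\Delta m$ lies in $L^\infty_T(B^{s-3}_{p,1})$, which is in $L^p$ only if $s\geq3$. So in general we only get $\d_t\de m\in L^1_T(L^p)$, since $\Delta m\in L^1_T(B^{s-1}_{p,1})$.
\end{itemize}

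The main obstacle is therefore the mismatch between $W^{1,1}_T(L^p)$ and the $W^{1,\infty}_T(L^p)$ required in (i). I expect the regularity in (i) to serve only to justify the $L^p$ energy computations and the maximal regularity estimate in the proof of Proposition \ref{p:stab}. For that purpose, $\d_t\de m\in L^1_T(L^p)$ together with $\de m\in \mc C_T(L^p)$ should suffice. If it does not, I would regularise in time, or argue directly with the mild (Duhamel) formulation of the heat equation. Either way I would state that the proof of Proposition \ref{p:stab} goes through verbatim under this weaker time regularity.

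With (i)--(vi) verified, $\mc J(T)<\infty$, and Proposition \ref{p:stab} with null initial differences gives $\de u=\de m=0$ on $[0,T_0]$. The continuation argument described above then concludes on $[0,T]$.
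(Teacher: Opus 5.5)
Your proposal is correct and follows the paper's route. You check hypotheses (i)--(vi) of Proposition~\ref{p:stab} with $r=2$ and vanishing initial differences, obtain uniqueness on a short interval, and continue up to $T$.

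On hypothesis (i) you are more careful than the paper. The paper asserts $\partial_t m\in\mathcal C([0,T];L^p)$ from $B^{s-2}_{p,1}\hookrightarrow L^p$, but $\Delta m$ is only in $\mathcal C([0,T];B^{s-3}_{p,1})$, which need not lie in $L^p$ when $s<3$ (possible for $p>2$). Your claim that weaker time regularity suffices is sound. You can make it concrete: interpolation gives $\Delta m\in L^2_T(B^{s-2}_{p,1})\subset L^2_T(L^p)$, so $\delta m\in W^{1,2}([0,T];L^p)$. This is enough for the chain rule on $\|\delta m\|^p_{L^p}$ and for identifying $\delta m$ with the Duhamel solution in the maximal regularity step.
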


\begin{proof}
 The proof simply consists in verifying that, under the previous assumptions, the conditions in Proposition \ref{p:stab} are fulfilled, so that
we can apply the stability estimate claimed therein. This will imply local uniqueness, which then yields global uniqueness.

First of all, we notice that, under the above condition on $s$, namely $s\,\geq\, 2\,+\,2/p\,\geq\,2$, one has the embedding $B^{s}_{p,1}\,\hookrightarrow\, W^{1,p}$,
so that $u$ and $m$ in fact belong to $\mc C\big([0,T];W^{1,p}\big)$. Thus, the first part of condition (i) is satisfied.

Next, by the embedding $B^{s}_{p,1}\,\hookrightarrow\,W^{2,\infty}$ (which holds since $s-1\,\geq\,1+2/p$), we infer that conditions (ii) and (iii) are also fulfilled.
From this and the previously mentioned property, it follows in particular that
\[
(u\cdot\nabla)u\,,\ \o \ \mbox{ and }\ \nabla m\qquad \mbox{ all belong to }\quad \mc C\big([0,T];L^p\big)\,.
\]
By an inspection of equation \eqref{eq:pressure} for the pressure gradient, we thus deduce that also
$\nabla\Pi$ belongs to the same space $\mc C\big([0,T];L^p\big)$. This implies that
\[
 \d_tu\,=\,-\,(u\cdot\nabla)u\,-\,\nabla\Pi\,-\,\nabla^\perp m\;\in\,\mc C\big([0,T];L^p\big)\,.
\]
Likewise, since $s-2\,\geq\,2/p\,>\,0$ (which implies the embedding $B^{s-2}_{p,1}\,\hookrightarrow\,L^p$), we have that
\[
 \d_tm\,=\,-\,u\cdot\nabla m \,+\,\Delta m\,+\,\o\;\in\,\mc C\big([0,T];L^p\big)\,.
\]
So, the second part of assumption (i) is verified as well.

To conclude, we argue as in Remark \ref{r:hom-B} to check that also assumptions (iv) and (v) are satisfied, with $r=2$ (actually, here we have
$\nabla m\in L^\infty\big([0,T]\times\R^2\big)$ by Besov embeddings). As for condition (vi), it follows by embeddings again,
as the couple $(s-2,1)$ verifies the conditions in \eqref{eq:AnnLInfty}, so that $B^{s-2}_{p,1}\,\hookrightarrow\,L^\infty$ and one has
$\nabla\o\in L^\infty\big([0,T]\times\R^2\big)$.

In the end, we have verified that, under our assumptions, one can apply the stability estimates of Proposition \ref{p:stab}. This implies local uniqueness,
whence (as already pointed out at the beginning) global uniqueness on $[0,T]$ follows by standard arguments.
\end{proof}

\subsubsection{The case of lower regularity} \label{sss:unique-energy}

Here, we discuss the case of lower values of $s$, under suitable conditions over the integrability index $p$.
The counterpart of the Proposition \ref{p:uni-gen} in this case is given by the following statement.

\begin{prop} \label{p:uni-energy}
Let $p\in\,]1,+\infty]$ and let $s\,\geq\,1\,+\,2/p$. Take an initial datum $\big(u_0,m_0\big)$ such that the following
conditions are satisfied:
\[
 \div u_0\,=\,0\,,\qquad u_0\,\in\,B^s_{p,1}(\R^2)\qquad \mbox{ and }\qquad m_0\,\in\,B^{s-1}_{p,1}(\R^2)\,.
\]
Assume that one of the following conditions hold true:
\begin{enumerate}[\rm (i)]
\item $1<p\leq 2$;
\item $p\in\,]2,+\infty]$, and one further supposes either that both $u_0$ and $m_0$ belong to $L^2(\R^2)$,
or that there exists $1<p_0<2$ such that $\o_0$ and $m_0$ belong to $L^{p_0}(\R^2)$.
\end{enumerate}

Then there exists at most one solution $\big(u,m\big)$ to \eqref{eq:2D-microp} associated to this initial datum and defined on some time interval
$[0,T]$, with $T>0$, such that $u$ and $m$ satisfy the regularity conditions of Theorem \ref{th:global-strong}, or of Theorem \ref{th:endpoint} in
the case of assumption {\rm (ii)} and integrability conditions over $\o_0$ and $m_0$.
%
\end{prop}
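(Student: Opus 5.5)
The plan is to apply the energy stability estimate of Proposition \ref{p:stab-energy} to two solutions $(u_1,m_1)$ and $(u_2,m_2)$ issued from the same datum. Since $\de u(0)=0$ and $\de m(0)=0$, the estimate gives $\de u\equiv0$ and $\de m\equiv0$ on $[0,T]$, provided $\wtilde{\mc J}(T)<+\infty$ and the hypotheses of that proposition hold. So the whole proof consists of checking hypotheses (i) and (ii) of Proposition \ref{p:stab-energy}, case by case.

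\textbf{Hypothesis (ii).} This is immediate from the regularity class. Since $s\geq1+2/p$ with third index $1$, we have $B^s_{p,1}\hookrightarrow W^{1,\infty}$, so $\nabla u_2\in L^\infty_T(L^\infty)$. Similarly $m_2\in L^1_T(B^{s+1}_{p,1})$ and $B^{s+1}_{p,1}\hookrightarrow W^{2,\infty}$, so $\nabla m_2\in L^1_T(L^\infty)$. The argument is identical for $p=+\infty$ with $s\geq1$.

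\textbf{Hypothesis (i): the $L^2$ bounds.} Here we need $\de u,\de m\in W^{1,2}(0,T;L^2)$, and most of the work is showing $u_i,m_i\in L^\infty_T(L^2)$.
\begin{itemize}
\item Case $p=2$: this is direct.
\item Case $1<p<2$: $m_i\in L^p\cap L^\infty\subset L^2$. For the velocity we cannot use $u\in L^p$ directly, since we need $L^2$ and $p<2$. Instead we invoke Remark \ref{r:u-Leb_p}: because $\o_i\in L^p\cap L^\infty$, we get $u_i\in L^q\cap L^\infty$ with $q>2$. The \emph{difference} $\de u$ itself must then be shown to lie in $L^2$. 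For this I would use the $L^2$ estimate \eqref{est:u-m-energy}, which is legitimate because $u_0\in L^p\cap L^\infty\subset L^2$ when $p<2$ (from $B^s_{p,1}\hookrightarrow L^p\cap L^\infty$). The a priori energy bound then gives $u_i\in L^\infty_T(L^2)$, justified by approximation or by first treating the $L^2$ regularity of weak solutions.
\item Case (ii) with $u_0,m_0\in L^2$: the energy bound \eqref{est:u-m-energy} directly gives $u_i,m_i\in L^\infty_T(L^2)$ and $\nabla m_i\in L^2_T(L^2)$.
\item Case (ii) with $\o_0,m_0\in L^{p_0}$: by \eqref{est:G-m-o_inf}, $\o_i,m_i\in L^\infty_T(L^{p_0}\cap L^\infty)$, hence $m_i\in L^2$, and by Proposition \ref{p:BS}, $u_i\in L^{q_0}\cap L^\infty$ with $q_0>2$. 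Here $u_i$ need not be in $L^2$, so I would argue on the difference directly. Since $\de\o(0)=0$ and $\de\o$ satisfies a transport-type equation with source, one gets $\de\o\in L^{p_0}$, hence $\de u\in L^{q_0}$ but not automatically in $L^2$. To fix this I would instead check that $\d_t\de u\in L^2$ through the equation: $\de u(t)=\int_0^t\d_t\de u$. Writing $\d_t u_i=-\mathbb P\big((u_i\cdot\nabla)u_i\big)-\nabla^\perp m_i$, the term $\mathbb P\big((u\cdot\nabla)u\big)=\mathbb P\div(u\otimes u)$ is controlled in $L^2$ by $\|u\|_{L^\infty}\|\nabla u\|_{L^2}$, with $\nabla u\in L^{p_0}\cap L^\infty\subset L^2$ by Calder\'on--Zygmund. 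The term $\nabla m_i$ lies in $L^2$ by interpolation between $m_i\in L^2$ and $\nabla^2 m_i$. Hence $\d_t u_i\in L^1_T(L^2)$, and therefore $\de u\in L^\infty_T(L^2)$. The same reasoning applies to $\d_t\de m$.
\end{itemize}

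\textbf{Main obstacle.} The hardest step is verifying the time-derivative regularity $W^{1,2}(0,T;L^2)$ demanded by (i), especially in the endpoint $p=\infty$ and (G.3)-type cases where the velocities themselves are not square integrable. I expect to handle this as sketched above: bound $\d_t u_i$ in $L^2$ via the Leray projector and $\d_t m_i$ via its equation. Where $\d_t$ is only $L^1_T(L^2)$ rather than $L^2_T(L^2)$, I would note that this suffices to justify the energy identity by a standard mollification argument, as in the proof of Proposition \ref{p:stab-energy}. Once (i) and (ii) hold, Gr\"onwall yields $\de u=\de m=0$ on $[0,T]$.
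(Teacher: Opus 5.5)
You follow the paper's route: apply Proposition \ref{p:stab-energy} to two solutions with the same datum and check its hypotheses. Your check of hypothesis (ii) is fine. In the case $\o_0,m_0\in L^{p_0}$, your argument for $\de u\in L^\infty_T(L^2)$ is also fine: each term of $\d_t u_i$ lies in $L^2$ because $u_i\in L^\infty$ and $\nabla u_i\in L^2$ by Calder\'on--Zygmund. This is equivalent to the paper's use of \eqref{eq:de-u_uniq}.

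The gap is in hypothesis (i) for $\de m$ when $p>2$. You propose to bound $\d_t m_i=-u_i\cdot\nabla m_i+\Delta m_i+\o_i$ in $L^2$ for each $i$ separately. The regularity class only gives $\Delta m_i\in L^1_T(B^{s-1}_{p,1})$ with $p>2$, which carries no $L^2$ information. In general $\Delta m_i\notin L^1_T(L^2)$:
\begin{itemize}
\item the free part $\Delta e^{t\Delta}m_0$ lies in $L^1_T(L^2)$ only if the high frequencies of $m_0$ are in $B^0_{2,1}$;
\item this does not follow from $m_0\in L^{p_0}\cap B^0_{\infty,1}$, i.e.\ the case $p=+\infty$, $s=1$;
\item the Duhamel part cannot compensate, since it has $\Delta(\cdot)\in L^2_T(L^2)$.
\end{itemize}
So $\d_t m_i\in L^1_T(L^2)$ can genuinely fail, and the $W^{1,1}$/mollification remark in your last paragraph cannot rescue this step. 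Relatedly, $\nabla m_i\in L^2$ does not follow ``by interpolation between $m_i\in L^2$ and $\nabla^2 m_i$''. Here $\nabla^2 m_i$ is only known in $L^\infty$ (or $L^p$, $p>2$), so Gagliardo--Nirenberg gives $\nabla m_i\in L^4$ (resp.\ $L^q$, $q>2$), not $L^2$.

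What is needed is what the paper does in the third case of Lemma \ref{l:energy}.
\begin{enumerate}
\item Get $\nabla m_i\in L^2_T(L^2)$ from the $L^2$ energy estimate on the $m$-equation (Remark \ref{r:m-L^2}), using $\o_i,m_i\in L^\infty_T(L^2)$. This also supplies $\nabla^\perp\de m\in L^2_T(L^2)$ in your $\de u$ argument.
\item Never estimate $\d_t m_i$ individually. Work instead with the heat equation for the difference,
\[
\d_t\de m\,-\,\Delta\de m\,=\,-\,u_1\cdot\nabla\de m\,-\,\de u\cdot\nabla m_2\,+\,\de\o\,,\qquad\qquad \de m_{|t=0}\,=\,0\,.
\]
Once $\de u\in\mc C_T(L^2)$ is known, the right-hand side is in $L^2_T(L^2)$. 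This uses $u_1\in L^\infty$, $\nabla m_2\in L^2_T(L^\infty)$ (by interpolation) and $\de\o\in\mc C_T(L^2)$.
\item $L^2$ maximal regularity with zero initial datum then gives $\d_t\de m,\ \Delta\de m\in L^2_T(L^2)$. The cancellation of the initial data in the difference is exactly what removes the initial-layer obstruction above.
\item Feeding this back into the $\de u$ equation gives $\d_t\de u\in L^2_T(L^2)$. Hence $\de u,\de m\in W^{1,2}([0,T];L^2)$, and no $W^{1,1}$ variant of Proposition \ref{p:stab-energy} is needed.
\end{enumerate}

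A minor point: for $1<p<2$ you do not need \eqref{est:u-m-energy} at all. Indeed $u_i(t)\in B^s_{p,1}\hookrightarrow L^p\cap L^\infty\subset L^2$ for every $t$. Your appeal to the energy bound ``justified by approximation'' is unnecessary, and as written it is not justified for an arbitrary solution in the class.
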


Since, by Sobolev embeddings, it is clear that assumption (ii) in Proposition \ref{p:stab-energy} is always satisfied under the hypothesis
$s\geq1+2/p$, the proof of Proposition \ref{p:uni-energy} boils down to showing the next lemma.

\begin{lemma} \label{l:energy}
Let $p\in\,]1,+\infty]$ and let $s\,\geq\, 1\,+\,2/p$.
Take two sets $\big(u_{0,j}, m_{0,j}\big)_{j=1.2}$ of initial data satisfying the assumptions of Theorem \ref{th:global-strong}.
Assume moreover that one of the following two conditions are satisfied:
\begin{itemize}
\item either $1<p\leq 2$;
\item or $2<p\leq+\infty$ and both $u_{0,j}$ and $m_{0,j}$ belong to $L^2(\R^2)$, for $j=1,2$;
\item or $2<p\leq+\infty$ and one further supposes that there exists $1<p_0<2$ such that $\o_0$ and $m_0$ belong to $L^{p_0}(\R^2)$, and that
one has the properties $\de u_0\,:=\,u_{0,1}\,-\,u_{0,2}\;\in\,L^2(\R^2)$ and $\de m_0\,:=\,m_{0,1}\,-\,m_{0,2}\;\in\,H^2(\R^2)$;
\end{itemize}
Let $\big(u_j,m_j\big)_{j=1,2}$ be respective solutions related to those sets of initial data defined on some common time interval $[0,T]$,
and assume that they satisfy the regularity conditions listed in Theorem \ref{th:global-strong} in the case $p\in\,]1,+\infty[\,$,
or the ones listed in Theorem \ref{th:endpoint} in the case $p=+\infty$.

Then the differences $\de u\,:=\,u_1-u_2$ and $\de m\,:=\,m_1-m_2$ belong to $W^{1,r}\big([0,T];L^2(\R^2)\big)$, for any index
$r\in[2,+\infty[\,$.
\end{lemma}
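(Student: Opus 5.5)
The goal is to show that $\de u$, $\de m$ and their time derivatives lie in $L^r_T(L^2)$ for every $r\in[2,+\infty[\,$. It suffices to prove $\de u,\de m\in L^\infty_T(L^2)$ and $\d_t\de u,\d_t\de m\in L^\infty_T(L^2)$, or at least in $L^r_T(L^2)$. We work with each solution separately where possible, and with differences only in the third case.

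\emph{Step 1 (the fields themselves).} In the case $1<p\leq2$ we combine $\o,m\in L^\infty_T(L^p\cap L^\infty)$ (estimates \eqref{est:G-m_final}, \eqref{est:o_final}) with $u_j\in L^\infty_T(L^p)$. Interpolating between $L^p$ and $L^\infty$ gives $u_j,m_j\in L^\infty_T(L^2)$. In the finite-energy case we invoke \eqref{est:u-m-energy} directly; this also gives $\nabla m_j\in L^2_T(L^2)$. In the third case we cannot expect $u_j\in L^2$, since $u_j\in L^{q_0}$ only by Proposition \ref{p:BS}. Here we write the equations \eqref{eq:de-u} and \eqref{eq:de-m} for the differences and perform an energy estimate directly on them, using $\de u_0\in L^2$ and $\de m_0\in H^2$. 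The coefficients $\nabla u_2$ and $\nabla m_2$ are bounded in $L^1_T(L^\infty)$ by Besov embeddings. This is the formal computation of Proposition \ref{p:stab-energy}; to justify it we mollify or truncate, so that $\de u,\de m\in L^\infty_T(L^2)$ with $\nabla\de m\in L^2_T(L^2)$.

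\emph{Step 2 (time derivatives of $m$).} We write $\d_t m_j=\Delta m_j-u_j\cdot\nabla m_j+\o_j$, or the analogous expression for $\d_t\de m$ in the third case. The term $\o_j$ lies in $L^\infty_T(L^2)$: in case one and in the third case by interpolation of $L^{p}$ (resp. $L^{p_0}$) with $L^\infty$, and in the energy case from \eqref{est:G-m_final} with $p=2$ once we note that $\o_0\in L^2$. The transport term is bounded by $\|u_j\|_{L^\infty}\|\nabla m_j\|_{L^2}$. For the Laplacian, the maximal regularity estimate of Lemma 7.3 of \cite{LR} on $L^r_T(L^2)$ yields $\Delta m_j\in L^r_T(L^2)$ for every finite $r$. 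This requires the forcing $\o_j-u_j\cdot\nabla m_j\in L^r_T(L^2)$ and $m_{0}\in \dot B^{2-2/r}_{2,r}$; for differences we use $\de m_0\in H^2$.

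The main obstacle is this initial-datum requirement in the first two cases. From the given data, $m_0\in B^{s-1}_{p,1}$ together with $L^2$ does not directly give $2-2/r$ derivatives in $L^2$ when $p>2$. To get around it, we would use the smoothing effect: $m_j\in L^1_T(B^{s+1}_{p,1})$, which, interpolated with $L^\infty_T(L^2)$, controls $\Delta m_j$ in $L^{r}_T(L^2)$ for some $r>1$ only. Failing that, we would split $m=e^{t\Delta}m_0+\tilde m$, where $\tilde m$ has zero initial datum and is handled by maximal regularity, and treat $e^{t\Delta}m_0$ by hand: we expect $\|\Delta e^{t\Delta}m_0\|_{L^2}\lesssim t^{-1/2}\|\nabla m_0\|_{L^2}$-type bounds combined with the $B^{s-1}_{p,1}$ regularity. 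Checking integrability in time of this piece for every finite $r$ is where the argument is most delicate. In the worst case we would settle for the range of $r$ actually needed in Proposition \ref{p:stab-energy}, namely $r=2$.

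\emph{Step 3 (time derivatives of $u$).} We write $\d_tu_j=-\mathbb P\big((u_j\cdot\nabla)u_j\big)-\nabla^\perp m_j$, where $\mathbb P$ is the Leray projector; here $\nabla^\perp m_j$ is already divergence-free. The second term is in $L^2_T(L^2)$ from Step 1 and in $L^r_T(L^2)$ by interpolating $\nabla m$ between $L^\infty_T(L^2)$ and $\Delta m\in L^r_T(L^2)$. We bound the convective term in $L^2$ by $\|u_j\|_{L^2}\|\nabla u_j\|_{L^\infty}$, which is finite by the Lipschitz embedding of $B^s_{p,1}$. In the third case we only have $u_j\notin L^2$, so we work with the difference instead:
\[
(u_1\cdot\nabla)\de u+(\de u\cdot\nabla)u_2 .
\]
We bound this in $L^2$ using $\de u\in L^\infty_T(L^2)$ and $\nabla u_2\in L^\infty$. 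For the first term we use the divergence-form identity from the proof of Proposition \ref{p:stab} after projection; alternatively, we estimate it by $\|u_1\|_{L^\infty}\|\nabla\de u\|_{L^2}$, with $\nabla\de u\in L^2$ coming from $\de\o\in L^2$ through the $\de\Gamma$ transport equation. Combining Steps 1 to 3 gives the claimed $W^{1,r}_T(L^2)$ regularity.
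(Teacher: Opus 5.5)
Your proposal has a genuine gap in the third case, which is the case the paper treats in detail. In Step~1 you get $\delta u\in L^\infty_T(L^2)$ from an energy estimate on \eqref{eq:de-u_uniq}. That is exactly the computation of Proposition~\ref{p:stab-energy}, whose hypothesis~(i) is what this lemma is supposed to provide, and ``mollify or truncate'' does not close the loop. After truncation, the nonlocal pressure term still needs a global bound such as $\|\nabla\delta\Pi\|_{L^2}$. With your pairings ($\delta u\in L^2$ against $\nabla u_2\in L^\infty$, or the divergence-form identity), that bound costs $\|\delta u\|_{L^2}$ again.

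The missing observation, which is the paper's argument, is that every term in \eqref{eq:de-u_uniq} lies in $\mathcal C_T(L^2)$ \emph{without} any prior information on $\delta u$ in $L^2$. Since $\omega_j,m_j\in\mathcal C_T(L^{p_0}\cap L^\infty)\subset\mathcal C_T(L^2)$ and $u_j\in\mathcal C_T(L^\infty)$, you can pair:
$\delta u\in L^\infty$ with $\nabla u_2\in L^2$ (Calder\'on--Zygmund applied to $\omega_2$), and $u_1\in L^\infty$ with $\nabla\delta u\in L^2$ (from $\delta\omega=\omega_1-\omega_2\in L^2$).
Then $\nabla\delta\Pi\in\mathcal C_T(L^2)$ by Calder\'on--Zygmund and $\nabla^\perp\delta m\in L^2_T(L^2)$. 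Integrating in time from $\delta u_0\in L^2$ gives $\delta u\in\mathcal C_T(L^2)$ and $\partial_t\delta u\in L^2_T(L^2)$ directly. Your alternative through the $\delta\Gamma$ equation is not available: that equation contains $\delta u\cdot\nabla\Gamma_2$, which requires $\nabla\omega_2\in L^\infty$, i.e.\ hypothesis~(vi) of Proposition~\ref{p:stab}. It is also unnecessary, since $\delta\omega\in L^2$ is immediate.

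Steps~2--3 are also circular as written. Maximal regularity gives $\Delta\delta m\in L^r_T(L^2)$ only if the forcing, which contains $u_1\cdot\nabla\delta m$, is already in $L^r_T(L^2)$. Yet you recover $\nabla\delta m\in L^r_T(L^2)$ from $\Delta\delta m\in L^r_T(L^2)$. The paper breaks this by iterating. With $\delta m_0\in H^2$ and forcing in $L^2_T(L^2)$, one gets $\delta m\in\mathcal C_T(L^2)\cap L^2_T(H^2)$, hence $\nabla\delta m\in L^4_T(L^2)$ by interpolation, which improves the forcing, and so on. Absorbing with Young's inequality, as in the proof of Proposition~\ref{p:stab}, also works. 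Note that the term $\delta u\cdot\nabla m_2$ needs $\nabla m_2\in L^r_T(L^\infty)$. The paper invokes $L^\infty_T(L^\infty)$, which embeddings give only when $s\ge 2+2/p$. In general, interpolation yields only $\nabla m_2\in L^2_T(L^\infty)$, which suffices for $r=2$, the only exponent Proposition~\ref{p:stab-energy} uses.

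For the first two cases your worry is justified, and the paper does not overcome it. It just asserts $\partial_t u_j,\partial_t m_j\in\mathcal C_T(L^2)$ ``as in the proof of Proposition~\ref{p:uni-gen}'', an argument written for $s\ge 2+2/p$. In case~1 the lemma's conclusion can in fact fail for $r>2$. Take $p=2$, $s=2$, $(u_{0,2},m_{0,2})=(0,0)$ (so $(u_2,m_2)\equiv(0,0)$), $u_{0,1}=0$, and $m_{0,1}\in B^1_{2,1}\setminus B^{1+\varepsilon}_{2,\infty}$ for all $\varepsilon>0$. Then $\partial_t\delta m-\Delta\delta m=\omega_1-u_1\cdot\nabla m_1\in L^\infty_T(L^2)$. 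So $\partial_t\delta m\in L^r_T(L^2)$ would force $\Delta e^{t\Delta}m_{0,1}\in L^r_T(L^2)$, i.e.\ $m_{0,1}\in B^{2-2/r}_{2,r}$, which is false.

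So settling for $r=2$ is the right move. In case~1 it works with your approach, because $m_{0,j}\in B^{s-1}_{p,1}\hookrightarrow B^{s-2/p}_{2,1}\hookrightarrow H^1$ when $p\le 2$. In case~2, however, your Step~2 uses $\omega_0\in L^2$, which does not follow from $u_0\in L^2\cap B^s_{p,1}$ when $p>2$ (nor does $m_0\in H^1$). There, even $r=2$ is not reached by your route.
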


\begin{proof}
Let us start our discussion by treating the easiest case: we observe that, when $p=2$, we can proceed as done in the proof of Proposition
\ref{p:uni-gen} above. This yields that, for $j=1,2$, both $\d_tu_j$ and $\d_tm_j$ belong to
$\mc C\big([0,T];L^2\big)$. Then, the result easily follows, as one can even take $r=+\infty$ in this case.

The same argument directly applies whenever the initial data $u_{0,j}$ and $m_{0,j}$ belong to $L^2(\R^2)$, for $j=1,2$. Indeed, under these assumptions,
the energy estimate \eqref{est:u-m-energy} yields that, for any $j=1,2$, one has $u_j\in\mc C\big([0,T];L^2\big)$ and
$m_j\in \mc C\big([0,T];L^2\big)\,\cap\,L^2\big([0,T];H^1\big)$ and we can conclude as before.
As a matter of fact, we point out that the property $\nabla\Pi_j \in L^\infty\big([0,T];L^2\big)$, necessary to perform energy estimates for $u_j$,
holds true also in the case $p=+\infty$, under the finite energy condition over $u_{0,j}$: for this, it is enough to resort to
equation \eqref{eq:pressure} and use that $u_j\in L^\infty_T(L^2)$ and $\nabla u_j\in L^\infty_T(L^\infty)$.
In particular, this argument applies when $1<p<2$, or when $2<p\leq+\infty$ and one assumes $u_{0,j},m_{0,j}\,\in\,L^2(\R^2)$, for $j=1,2$.

Next, let us focus on the case $2<p\leq+\infty$, with the assumption that 
$\o_0$ and $m_0$ belong to $L^{p_0}(\R^2)$, for some $p_0\in\,]1,2[\,$, and that 
$\de u_0\,\in\,L^2(\R^2)$ and $\de m_0\,\in\,H^2(\R^2)$.
By Besov embeddings, we see that one has
\[
 \mbox{ for }\ j\,=\,1,2\,,\qquad\qquad \o_{0,j}\,,\,m_{0,j}\;\in\,L^{p_0}(\R^2)\cap L^\infty(\R^2)\,,
\]
so, in particular, they belong to $L^2(\R^2)$ also. Thus, by arguing as for getting \eqref{est:G-m_final} and \eqref{est:o_final},
see also Remark \ref{r:m-L^2}, one has, for each $j=1,2$, that
\begin{equation} \label{eq:prop_o-m_j}
\o_j\,,\, m_j\;\in\,\mc C\big([0,T];L^2\big)\,,\qquad\qquad \mbox{ with }\qquad \nabla m_j\,\in\,L^2\big([0,T];L^2\big)\,.
\end{equation}
The difficulty, here, relies in treating the difference of the velocity fields $\de u$, because from our assumptions it does not follow directly that each
$u_j$ belongs to $L^2$. To get the sought property, we will argue as in the proof of the classical Yudovich theorem (see \tsl{e.g.} Chapter 8 of \cite{Maj-Bert},
see also \cite{F-FD}). Consider equation \eqref{eq:de-u} for $\de u$ and the corresponding one for the difference of the pressure gradients
$\nabla\de\Pi\,:=\,\nabla\Pi_1\,-\,\nabla\Pi_2$, which we recall here for the reader's convenience:
\begin{align}
\label{eq:de-u_uniq}
& \d_t\de u\,+\,(u_1\cdot\nabla)\de u\,+\,(\de u\cdot\nabla)u_2\,+\,\nabla\de\Pi\,=\,-\,\nabla^\perp\de m \\
\nonumber
&\qquad \mbox{ and }\qquad 
\nabla\de\Pi\,=\,\nabla(-\Delta)^{-1}\div\Big((u_1\cdot\nabla)\de u\,+\,(\de u\cdot\nabla)u_2\Big)\,.
\end{align}
By Besov embeddings, we know that each $u_j$ belongs to the space $\mc C_T(L^\infty)$, which implies, together with \eqref{eq:prop_o-m_j}, that the bilinear
terms $(u_1\cdot\nabla)\de u$ and $(\de u\cdot\nabla)u_2$ both belong to the space $\mc C_T(L^2)$. From the elliptic equation for $\nabla\de\Pi$,
one deduces that $\nabla\de\Pi$ belongs to the same space $\mc C_T(L^2)$. Since $\nabla^\perp\de m$ belongs to $L^2_T(L^2)$ by \eqref{eq:prop_o-m_j} and the initial 
datum $\de u_0$ belongs to $L^2$ by assumption, we infer that
\[
 \de u\,\in\,\mc C\big([0,T];L^2\big)\,.
\]
Next, we consider the equation for $\de m$, which we write in the form
\begin{equation} \label{eq:de-m_heat}
 \d_t\de m\,-\,\Delta\de m\,=\,-\,u_1\cdot\nabla \de m\,-\,\de u\cdot\nabla m_2\,+\,\de\o\,,
\end{equation}
with initial datum $\de m_{0}\in H^2$. For later use, we observe that, for any $r\in[2,+\infty]$, one has $H^2\equiv B^2_{2,2}\,\hookrightarrow\,B^{2-\frac{2}{r}}_{2,r}\,
\hookrightarrow\,\dot B^{2-\frac{2}{r}}_{2,r}$, since $2-\frac{2}{r}>0$. At the same time, the previous argument yields that
$u_1\cdot\nabla \de m\,,\,\de u\cdot\nabla m_2\;\in\,L^2_T(L^2)$ and $\de \o\,\in\,\mc C_T(L^2)$.
Thus, we can start by applying a maximal regularity estimate (similarly to what done in the proof of Proposition \ref{p:stab}) to deduce that
\[
 \d_t\de m\,,\,\Delta\de m\;\in\,L^2\big([0,T];L^2\big)\,,
\]
which implies that $\de m\in \mc C_T(L^2)\cap L^2_T(H^2)$. By interpolation, we thus find that $\nabla \de m\in L^4_T(L^2)$.
Inserting this property into \eqref{eq:de-m_heat}, we further infer that
\[
 \d_t\de m\,,\,\Delta\de m\;\in\,L^4\big([0,T];L^2\big)\,,
\]
because each $u_j$ belongs to $\mc C_T(L^\infty)$, each $\o_j$ belongs to $\mc C_T(L^p)$ for any $p\in[p_0,+\infty]$ and each $\nabla m_j$ belongs to
$L^\infty_T(L^\infty)$ by Besov embeddings.
By iterating the previous argument, one can thus prove that
\[
 \d_t\de m\,,\,\Delta\de m\;\in\,\bigcap_{2\leq r<+\infty}L^r\big([0,T];L^2\big)\,.
\]
Unfortunately, the multiplicative constants coming from maximal regularity estimates are not uniform in $r\geq2$,
so we cannot pass to the limit $r\to+\infty$ in the above property. However, from it we still deduce that
$\de m\in W^{1,r}\big([0,T];L^2\big)\cap L^r\big([0,T];H^2\big)$, for any $r\in[2,+\infty[\,$.
At this point, going back to \eqref{eq:de-u_uniq} and using this property, we find that $\d_t\de u$ belongs to $L^r_T(L^2)$ for any finite $r\geq2$.
The proof of the lemma is thus completed.
\end{proof}

We are now in the position of proving Proposition \ref{p:uni-energy}.

\begin{proof}[Proof of Proposition \ref{p:uni-energy}]
The proof is simply based on the application of Proposition \ref{p:stab-energy}. Thus, we only need to check that the assumptions of that result are fulfilled.

As already pointed out, the condition in item (ii) of Proposition \ref{p:stab-energy} simply follows from the fact that
$\nabla u\,\in\,L^\infty_T(B^{s-1}_{p,1})$ and (by interpolation) $\nabla m\,\in\,L^2_T(B^{s-1}_{p,1})$. Indeed,
as $s-1\geq 2/p\geq 0$, one has $B^{s-1}_{p,1}\,\hookrightarrow\,L^\infty$.

We have now to check the condition in item (i) of Proposition \ref{p:stab-energy}. However, this is a straightforward consequence of
Lemma \ref{l:energy}. Indeed, whenever $1<p\leq2$ (since $B^{s-1}_{p,1}\,\hookrightarrow\,L^\infty$ under our assumptions
over $s$) or $2<p\leq+\infty$ and one further assumes that $u_0\,,\,m_0\;\in\,L^{2}(\R^2)$, a direct application of Lemma \ref{l:energy} yields the conclusion.
Focus now on the hypothesis $2<p\leq+\infty$ and that there exists $p_0\in\,]1,2[\,$ such that $\o_0$ and $m_0$ belong to $L^{p_0}(\R^2)$.
Since the difference $\big(\de u, \de m\big)$ of two hypothetical solutions related to the same initial datum would have zero initial value,
we can argue as in the third item of Lemma \ref{l:energy} to deduce that $\de u\,,\,\de m\;\in W^{1,r}\big([0,T];L^2\big)$ for any $r\in[2,+\infty[\,$.
Then, assumption (i) of Proposition \ref{p:stab-energy} holds true also in this instance, as claimed.

In the end, we have verified that the assumptions of Proposition \ref{p:uni-energy} allow to apply the stability estimates from Proposition \ref{p:stab-energy}.
Therefore, uniqueness follows.
\end{proof}

\section{Proof of existence: local theory} \label{s:existence}

In this section, we prove the existence of solutions at the level of regularity claimed in Theorems \ref{th:global-strong} and \ref{th:endpoint}.
However, we will show here only \emph{local} existence. The global character of these solutions (under suitable assumptions on $s$)
will be established in Section \ref{s:global} below.

While the scheme of the proof is based on the classical three-step argument (regularisation of the initial data, construction of smooth approximate solutions, convergence
to a solution of the original system), the actual construction of smooth approximate solutions will depend on the value of the regularity index $s$.
This will entail changes also in the convergence argument, which relies on the stability estimates from Subsection \ref{ss:stab} for large values of $s$
(or suitable ranges of $p\in\,]1,+\infty]$), on a compactness argument for smaller values of $s$ (and general $p$).

For convenience of notation, throughout this section we will adopt the following convention. 
Given a sequence of functions $\big(f_n\big)_{n\in\N}\,\subset\,\mf B$, where $\mf B$ is a Banach space,
we will write $\big(f_n\big)_{n\in\N}\sqsubset \mf B$ if there exists a constant $C>0$ such that $\|f_n\|_{\mf B}\leq C$ for all $n\in\N$.

\subsection{Regularisation of the initial datum} \label{ss:regul}

We start here by presenting the regularisation procedure of the initial datum. This step does not actually depend on the specific assumptions and is common
to both Theorems \ref{th:global-strong} and \ref{th:endpoint}, with no further restrictions on $s$ nor on $p$.

So, let $\big(u_0,m_0\big)$ be the initial datum fixed in the statement of either Theorem \ref{th:global-strong},
with possibly the additional integrability conditions stated in (G.2) or (G.3), or Theorem \ref{th:endpoint}.
For any $n\in\N$, we smooth out this initial datum by setting
\[
 u_{0,n}\,:=\,S_n u_0\qquad\quad \mbox{ and }\qquad\quad m_{0,n}\,:=\,S_n m_0\,,
\]
where the operators $S_n$ are the low-frequency cut-off operators associated to a Littlewood-Paley decomposition, as defined in \eqref{eq:S_j}.
We thus obtain a sequence of smooth functions  $\big(u_{0,n},m_{0,n}\big)_{n\in\N}$, which in addition possess suitable integrability properties
together with all their derivatives. As these properties depend on the value of $p\in\,]1,+\infty]$ and on the additional integrability assumptions,
for the sake of conciseness we avoid to give the details here.

It immediately follows from the definitions and the properties of the operators $S_n$ that
\begin{align*}
\forall\,n\in\N\,,\quad &\div u_{0,n}\,=\,0\,, \\
& \qquad \mbox{ and }\qquad \big(u_{0,n}\big)_{n\in\N}\,\sqsubset\,B^{s}_{p,1}\;,\qquad \big(m_{0,n}\big)_{n\in\N}\,\sqsubset\,B^{s-1}_{p,1}\,.
\end{align*}
In addition, we have the following strong convergence properties (see \tsl{e.g.} Lemma 2.73 of \cite{BCD}):
in the limit for $n\to+\infty$, one has
\begin{align*}
& u_{0,n}\, \longrightarrow\,u_0 \qquad  \text{ in } \quad B^s_{p,1} \qquad\qquad \mbox{ and }\qquad\qquad
m_{0,n}\, \longrightarrow\,u_0 \qquad  \text{ in } \quad B^{s-1}_{p,1}\,.
\end{align*}
Once again, we point out that, in the presence of further assumptions on $\big(u_0,m_0\big)$ and the initial vorticity
$\o_0$,
like in conditions (G.2) and (G.3) of Theorem \ref{th:global-strong} or in Theorem \ref{th:endpoint}, one also gets convergence of the approximations
to the original data in the strong topology of the related Lebesgue spaces.


\medbreak
For any $n\in\N$, we are now going to construct a unique smooth solution associated to the initial datum $\big(u_{0,n}, m_{0,n}\big)$.
However, due to reasons which will appear clear later on, we need to split our discussions into two cases. In Subsection \ref{ss:smooth-s}
we treat the case of high regularity and subcritical values. In Subsection \ref{ss:smooth-less}, instead, we consider the case of lower values of
the regularity index $s$, and the critical regularity cases.

\subsection{Smooth approximate solutions: high regularity} \label{ss:smooth-s}
We now construct smooth approximate solutions associated to the regularised initial data, under one of the following specific assumptions:
\begin{itemize}
 \item $p\in\,]1,+\infty]$ and $s\,>\, 1\,+\,2/p$, together with the assumption $u_0,m_0\;\in L^2(\R^2)$
(which simply follows by embeddings in the case $1< p\leq 2$);
 \item $p\in\,]2,+\infty[\,$ and $s\,\geq\,2\,\big(1+1/p\big)$ (but the initial datum does not necessarily have finite energy);
  \item $p=+\infty$ and $s> 1$, with in addition $\o_0$ and $m_0$ belong to $L^{p_0}(\R^2)$ (as in the assumptions of Theorem \ref{th:endpoint}).
\end{itemize}
It is important to stress that we avoid to use this construction in the case of critical values of $s$, namely when $s=1+2/p$.
The reason will be apparent in Section \ref{s:global}, see also the discussion at the beginning of Subsection \ref{ss:smooth-less} below.

Before going on, let us remark that, of course, in presence of finite energy initial data (first item of the previous list) we could use a Friedrichs
method for the construction of smooth
approximate solutions, a method which would be easier than the one we are going to present here. We do not do this here, for the sake of conciseness. 
On the contrary, we put the accent on the convergence of the approximation scheme: as a matter of fact, in all of the three cases above, we can use the
stability estimates of Subsection \ref{ss:stab}.

\subsubsection{Construction} \label{sss:construct}

We treat the three cases above in a unified way and proceed by induction.
In order to avoid confusion, the first element of the iterative scheme will be denoted with the index $1$. For convenience of notation,
also in this part we set $\alpha=\k=1$.

To begin with,  we first set
\[
u^1\,=\,u_{0,1} \qquad \mbox{ and }\qquad m^1\,=\,m_{0,1}\,.
\]
We also define $\o^1\,=\,\curl(u^1)\,=\,\o_{0,1}$ and
\begin{equation*}
\Gamma^1\,:=\,\omega^1\,+\, m^1\,.
\end{equation*}

Next, let us assume that, for some $n\in\N$, we have already defined smooth functions  
$\big(u^n, m^n\big)$ such that $u^n$ and $m^n$ belong, respectively, to $\mc C\big(\R_+;B^s_{p,1}\big)$ and
$\mc C\big(\R_+;B^{s-1}_{p,1}\big)\,\cap\,L^1_\loc\big(\R_+;B^{s+1}_{p,1}\big)$,
together with the property $\div u^n\,=\,0$.
In the case of finite energy initial data, we also assume
$u^n\in \mc C\big(\R_+;L^2\big)$ and $m^n\in\mc C\big(\R_+;L^2\big)\,\cap\,L^2_\loc\big(\R_+;H^1\big)$. In the case of additional integrability
conditions over $\o_0$ and $m_0$, we assume instead that
$\o^n\,:=\,\curl(u^n)$ and $m^n$ both belong to $\mc C\big(\R_+; L^{p_0}\cap L^\infty\big)$.
It is an easy verification to see that the first element $\big(u^1,m^1\big)$ constructed above satisfies these properties.

In order to construct the $(n+1)$-th term of the sequence, we start by defining
$m^{n+1}$ as the solution of the linear transport-diffusion equation
\begin{equation}
\label{eq:def_m_n}
    \d_tm^{n+1}\,+\,u^n\cdot\nabla m^{n+1}\,-\,\Delta m^{n+1}\,=\,\o^n,
\end{equation}
with initial datum
\[
m^{n+1}_{|{t=0}}\,=\,m_{0,n+1}\,.
\]
Since the transport field $u^n$ and the source term $\o^n$ are smooth, thanks to \tsl{e.g.} Theorem \ref{t:tr-diff} we get that,
for the given smooth initial datum $m_{0,n+1}$ there exists a unique smooth solution of the transport diffusion equation \eqref{eq:def_m_n} such that
\begin{equation} \label{eq:reg_m-n}
m^{n+1}\;\in \mc C\big(\R_+;B^{s-1}_{p,1}\big)\,\cap\,L^1_\loc\big(\R_+;B^{s+1}_{p,1}\big)\,.
\end{equation}
Next, we define the couple $\big(u^{n+1},\nabla\Pi^{n+1}\big)$ as the unique smooth solution of the linear hyperbolic-elliptic system
\begin{equation} \label{eq:def_u_n}
\left\{\begin{array}{l}
        \d_tu^{n+1}\,+\,(u^n\cdot\nabla)u^{n+1}\,+\,\nabla\Pi^{n+1}\,=\,-\,\nabla^\perp m^{n+1} \\[1ex]
        -\Delta\Pi^{n+1}\,=\,\div\big((u^n\cdot\nabla)u^{n+1}\big)\,,
       \end{array}
\right.
\end{equation}
associated to the initial datum
\[
 u^{n+1}_{|t=0}\,=\,u_{0,n+1}\,.
\]
Observe that applying the divergence operator to the equation for $u^{n+1}$, by definition of $\nabla\Pi^{n+1}$ we immediately see that
\[
 \d_t\div u^{n+1}\,=\,0\,,\qquad\qquad \mbox{ which implies }\qquad \div u^{n+1}\,=\,0\quad \mbox{ on }\ \R_+\times\R^2\,,
\]
as the initial datum is divergence-free.
On the other hand, if we apply the $\curl$ operator of equation \eqref{eq:def_u_n}, we find an equation for $\o^{n+1}\,:=\,\curl(u^{n+1})$:
\begin{equation} \label{eq:o-n}
\d_t\o^{n+1}\,+\,u^n\cdot\nabla \o^{n+1}\,=\,-\,\Delta m^{n+1}\,+\,\mc B\big(\nabla u^n,\nabla u^{n+1}\big)\,,
\end{equation}
where, given two divergence-free vector fields $v$ and $w$, we have defined the bilinear (skew-symmetric) operator $\mc B(\nabla v,\nabla w)$ by the formula
\[
 \mc B(\nabla v,\nabla w)\,:=\,\d_1w^1\,\big(\d_1v^2\,+\,\d_2v^1\big)\,-\,\d_1v^1\,\big(\d_1w^2\,+\,\d_2w^1\big)\,.
\]
Thus, after defining $\Gamma^{n+1}$ by the analogue of formula \eqref{eq:def_G} applied to $m^{n+1}$ and $\o^{n+1}$, we see that it solves the transport
equation
\begin{equation}
\label{eq:G_n}
\d_t\Gamma^{n+1}\,+\,u^n\cdot\nabla\Gamma^{n+1}\, 
=\,\o^n\,+\,\mc B\big(\nabla u^n,\nabla u^{n+1}\big)\,,
\end{equation}
supplemented with (recall that $\alpha=\k=1$ here) the initial condition
\[
\Gamma^{n+1}_{|{t=0}}\,=\,\Gamma_{0,n+1}\,:=\,\omega_{0,n+1}\,+\,m_{0,n+1}\,.
\]
Now, using Theorem \ref{th:transport} for \eqref{eq:o-n} together with \eqref{eq:reg_m-n} and the $L^p$ bounds developed in Subsection \ref{ss:leb},
it is easy to prove that
\[
 u^{n+1}\;\in\,\mc C\big(\R_+;B^s_{p,1}\big)\,.
\]

Finally, we also mention that, in case of finite energy initial datum, we can show in addition that 
$u^{n+1}\in \mc C\big(\R_+;L^2\big)$ and $m^{n+1}\in\mc C\big(\R_+;L^2\big)\,\cap\,L^2_\loc\big(\R_+;H^1\big)$. In the case of additional integrability
conditions over $\o_0$ and $m_0$, we also have that
$\o^{n+1}$ and $m^{n+1}$ both belong to the space $\mc C\big(\R_+; L^{p_0}\cap L^\infty\big)$.

\subsubsection{Uniform bounds} \label{sss:unif-bounds}

The above mentioned regularity properties for the sequence of solutions $\big(u^n,m^n\big)_{n\geq1}$ are not enough for our scopes.
Indeed, as is well-known, uniform bounds are needed for passing to the limit $n\to+\infty$: getting these bounds is precisely the scope of this paragraph.
For simplicity, we only focus on the second case among the three listed at the beginning of Subsection \ref{ss:smooth-s}. Namely,
throughout this part we are going to assume that
\[
 p\,\in\;]2,+\infty[\;\qquad\qquad \mbox{ and }\qquad\qquad s\,\geq\, 2\,\left(1\,+\,\frac{1}{p}\right)\,.
\]
The other two cases can be treated by very similar arguments, with the price of estimating some additional Lebesgue norm
of the solutions.

Here, the computations are analogous to the ones performed in Section \ref{s:a-priori}, hence let us revisit them quickly. First of all,
for $p$ and $s$ fixed as above, we define, for any integer $n\geq 1$ and any $t\geq0$, the $n$-th energy functional
\begin{equation} \label{eq:def_E_n}
 \mc E_n(t)\,:=\,\left\|u^n\right\|_{L^\infty_t(B^{s}_{p,1})}\,+\,\left\|m^n\right\|_{L^\infty_t(B^{s-1}_{p,1})\cap L^1_t(B^{s+1}_{p,1})}\,,
\end{equation}
where we agree that, for $t=0$, the quantity $\mc E_n(0)$ is defined as 
\[
 \mc E_n(0)\,:=\,\left\|u_{0,n}\right\|_{B^{s}_{p,1}}\,+\,\left\|m_{0,n}\right\|_{B^{s-1}_{p,1}}\,.
\]
Observe that, owing to the properties of the regularising operators, one has
\begin{equation} \label{est:u-b_E_0}
\forall\,n\,\in\,\N\,,\ n\geq1\,,\qquad\qquad \mc E_{n}(0)\,\lesssim\,\mc E(0)\,:=\,\left\|u_{0}\right\|_{B^{s}_{p,1}}\,+\,\left\|m_{0}\right\|_{B^{s-1}_{p,1}}\,.
\end{equation}
Remark also that, thanks to \eqref{est:low-high}, for any $t\geq0$ one has the equivalence
\[
\mc E_n(t)\,\approx\,\left\|u^n\right\|_{L^\infty_t(L^p)}\,+\,\left\|\o^n\right\|_{L^\infty_t(B^{s-1}_{p,1})}\,+\,
\left\|m^n\right\|_{L^\infty_t(B^{s-1}_{p,1})\cap L^1_t(B^{s+1}_{p,1})}\,.
\]

Next, we bound the Lebesgue norms of the solution, following the main steps performed in Subsection \ref{ss:leb}. The point to be noted here is that,
differently from those computations, the estimates we get are not global in time, essnentially due to the presence of the additional bilinear term $\mc B$
in \eqref{eq:o-n}.

Performing a $L^p$ estimate for system \eqref{eq:def_u_n}, since $p\in\,]2,+\infty[\,$ is finite, we get
\begin{align*}
 \left\|u^{n+1}\right\|_{L^\infty_t(L^p)}\,&\lesssim\,\left\|u_{0,n+1}\right\|_{L^p}\,+\,\int^t_0\Big(\left\|\nabla^\perp m^{n+1}\right\|_{L^p}\,+\,
 \left\|\nabla\Pi^{n+1}\right\|_{L^p}\Big)\,\dd\t \\
 &\lesssim\,\left\|u_{0,n+1}\right\|_{L^p}\,+\,\int^t_0\Big(\left\|m^{n+1}\right\|_{W^{1,p}}\,+\,
\left\|u^{n}\right\|_{L^\infty}\, \left\|\nabla u^{n+1}\right\|_{L^p}\Big)\,\dd\t\,.
\end{align*}
By using the embeddings $B^s_{p,1}\,\hookrightarrow\,L^\infty$ and $B^{s-1}_{p,1}\,\hookrightarrow\,W^{1,p}$, which hold true under our assumptions
(for the letter, we use the fact that $s-1\,\geq\,1\,+\,2/p\,\geq\,1$), we can thus bound
\begin{equation} \label{est:u_n-L^p}
 \left\|u^{n+1}\right\|_{L^\infty_t(L^p)}\,\lesssim\,\mc E_{n+1}(0)\,+\,\int^t_0\Big(1\,+\,\mc E_n(\t)\Big)\,\mc E_{n+1}(\t)\,\dd\t\,.
\end{equation}

Next, repeating \tsl{mutatis mutandis} the computations of Subsection \ref{ss:leb}, we can bound the $L^\infty$ norm of $m^{n+1}$ and $\Gamma^{n+1}$
as follows:
\begin{align*}
\left\|m^{n+1}\right\|_{L^\infty_t(L^\infty)}\,&\lesssim\,\left\|m_{0,n+1}\right\|_{L^\infty}\,+\,\int^t_0\left\|\o^{n}\right\|_{L^\infty}\,\dd\t\,, \\
\left\|\Gamma^{n+1}\right\|_{L^\infty_t(L^\infty)}\,&\lesssim\,\left\|\Gamma_{0,n+1}\right\|_{L^\infty}\,+\,
\int^t_0\Big(\left\|\o^{n}\right\|_{L^\infty}\,+\,\left\|\nabla u^n\right\|_{L^\infty}\,\left\|\nabla u^{n+1}\right\|_{L^\infty}\Big)\,\dd\t\,.
\end{align*}
Putting them together, those bounds give rise to the estimate
\begin{align}
\label{est:m-G_n-inf}
\left\|\Big(\o^{n+1}\,,\,m^{n+1}\Big)\right\|_{L^\infty_t(L^\infty)}\,&\lesssim\,\mc E_{n+1}(0)\,+\,\int^t_0\mc E_n(\t)\,\dd\t\,+\,
\int^t_0\mc E_n(\t)\,\mc E_{n+1}(\t)\,\dd\t\,,
\end{align}
where, for simplicity of notation, we have set $\|(f,g)\|_X\,=\,\|f\|_X\,+\,\|g\|_X$.

The next step consists in bounding the Besov norms of $\o^{n+1}$ and $m^{n+1}$. 
As before, treating the vorticity function relies on the use of the auxiliary unknown $\Gamma^{n+1}$: applying Theorem \ref{th:transport}
to equation \eqref{eq:G_n} and using the tame estimates of Corollary \ref{c:tame} and embeddings, we get
\begin{align*}
\left\|\Gamma^{n+1}\right\|_{L^\infty_t(B^{s-1}_{p,1})}\,&\lesssim\,\left\|\Gamma_{0,n+1}\right\|_{B^{s-1}_{p,1}}\,+\,
\int^t_0\Big(\left\|\o^n\right\|_{B^{s-1}_{p,1}}\,+\,\left\|\nabla u^n\right\|_{B^{s-1}_{p,1}}\,\left\|\nabla u^{n+1}\right\|_{B^{s-1}_{p,1}}\Big)\,\dd\t \\
&\qquad\qquad\qquad
\,+\,\int^t_0\Big(\left\|\nabla u^n\right\|_{L^\infty}\,\left\|\Gamma^{n+1}\right\|_{B^{s-1}_{p,1}}\,+\,
\left\|\Gamma^{n+1}\right\|_{L^\infty}\,\left\|\nabla u^n\right\|_{B^{s-1}_{p,1}}\Big)\,\dd\t\,,
\end{align*}
where we have argued as in Subsection \ref{ss:bes} to treat the commutator coming from the transport term $u^n\cdot\nabla\Gamma^{n+1}$.
Thus, we deduce that
\begin{align*}
\left\|\Gamma^{n+1}\right\|_{L^\infty_t(B^{s-1}_{p,1})}\,&\lesssim\,\mc E_{n+1}(0)\,+\,\int^t_0\mc E_n(\t)\,\dd\t\,+\,
\int^t_0\mc E_n(\t)\,\mc E_{n+1}(\t)\,\dd\t\,.
\end{align*}
The estimate of the Besov norm of $m^{n+1}$ is based on the application of Theorem \ref{t:tr-diff} to the transport-diffusion equation \eqref{eq:def_m_n}:
we find that
\begin{align*}
\left\|m^{n+1}\right\|_{L^\infty_t(B^{s-1}_{p,1})\cap L^1_t(B^{s+1}_{p,1})}\,&\lesssim\,(1+t)
\left(\left\|m_{0,n+1}\right\|_{B^{s-1}_{p,1}}+
\int^t_0\Big(\left\|\o^n\right\|_{B^{s-1}_{p,1}}+\left\|u^n\cdot\nabla m^{n+1}\right\|_{B^{s-1}_{p,1}}\Big)\dd\t\right) \\
&\lesssim\,(1+t)\,\left\|m_{0,n+1}\right\|_{B^{s-1}_{p,1}}\,+\,(1+t)\,\int^t_0\left\|\o^n\right\|_{B^{s-1}_{p,1}}\,\dd\t \\
&\qquad \,+\,(1+t)\,
\int^t_0\Big(\left\|u^n\right\|_{L^\infty}\,\left\|m^{n+1}\right\|_{B^{s}_{p,1}}\,+\,
\left\|u^n\right\|_{B^s_{p,1}}\,\left\|m^{n+1}\right\|_{L^\infty}\Big)\,\dd\t\,,
\end{align*}
where, for passing from the first inequality to the second one, we have argued as in Subsection \ref{ss:bes}.
By the use of an interpolation argument together with the Young inequality, we are thus led to the following bound:
\begin{align*}
\left\|m^{n+1}\right\|_{L^\infty_t(B^{s-1}_{p,1})\cap L^1_t(B^{s+1}_{p,1})}\,&\lesssim\,(1+t)\,\mc E_{n+1}(0)\,+\,(1+t)^2\,
\int^t_0\left\|u^n\right\|^2_{L^\infty}\,\left\|m^{n+1}\right\|_{B^{s-1}_{p,1}}\,\dd\t \\
&\qquad\qquad \,+\,(1+t)\,\int^t_0\mc E_n(\t)\,\dd\t
\,+\,(1+t)\,\int^t_0\mc E_n(\t)\,\mc E_{n+1}(\t)\,\dd\t\,.
\end{align*}
Putting the estimate for $\Gamma^{n+1}$ with the previous inequality, we infer that
\begin{align}
 \label{est:o-m_n-B}
&\left\|\o^{n+1}\right\|_{L^\infty_t(B^{s-1}_{p,1})}\,+\,\left\|m^{n+1}\right\|_{L^\infty_t(B^{s-1}_{p,1})\cap L^1_t(B^{s+1}_{p,1})} \\
\nonumber
&\quad \lesssim\,(1+t)\,\mc E_{n+1}(0)\,+\,(1+t)\,\int^t_0\mc E_n(\t)\,\dd\t\,+\,
(1+t)^2\,\int^t_0\Big(1\,+\,\left(\mc E_n(\t)\right)^2\Big)\,\mc E_{n+1}(\t)\,\dd\t\,.
\end{align}

Finally, we can gather estimates \eqref{est:u_n-L^p}, \eqref{est:m-G_n-inf} and \eqref{est:o-m_n-B} to deduce that, for any $t\geq0$, one has
\begin{align}
\label{est:E_n-tot}
 \mc E_{n+1}(t)\,&\lesssim\,(1+t)\,\mc E(0)\,+\,(1+t)\,\int^t_0\mc E_n(\t)\,\dd\t \\
\nonumber
&\qquad\qquad\qquad\qquad\qquad
\,+\,(1+t)^2\,\int^t_0\Big(1\,+\,\left(\mc E_n(\t)\right)^2\Big)\,\mc E_{n+1}(\t)\,\dd\t\,,
\end{align}
where we have also used the uniform bound in \eqref{est:u-b_E_0} for the initial datum.

\medbreak
With inequality \eqref{est:E_n-tot} at hand, we can easily find uniform estimates (with respect to $n\geq1$) on some finite time interval $[0,T]$. We proceed
similarly as done in Subsection \ref{ss:bes}. Define the function $f(t)=(1+t)^2$ and, for any integer $n\geq1$, the time
\[
 T_n\,:=\,\sup\left\{t\,>\,0\;\bigg|\quad \int^t_0\Big(1\,+\,\left(\mc E_n(\t)\right)^2\Big)\,\dd\t\,\leq\,2\,\min\big\{\log 2\,,\,\mc E(0)\big\}\right\}\,.
\]
Observe that, for any $n\geq1$, the time $T_n$ is well-defined and one has $T_n>0$.
Moreover, by \eqref{est:E_n-tot} and an application of the Gr\"onwall lemma, we see that, in the time interval $[0,T_n]$, one has the estimate
\begin{equation} \label{est:u_b-E_n_part}
  \mc E_{n+1}(t)\,\leq\,C\,f(t)\,e^{C\,f(t)}\,\mc E(0)\,,
\end{equation}
for a suitable universal constant $C>0$, independent of $t\in[0,T_n]$, of $n\geq1$ and of the data and solutions of our problem.

Based on the previous estimate \eqref{est:u_b-E_n_part} and on the definition of $T_n$, it is easy to see, by an induction argument, that
there exists a time $T>0$, only depending on the norms of the initial datum through $\mc E(0)$, such that
\begin{equation} \label{est:u_b-E_n_final}
 \inf_{n\geq1}T_n\,\geq\,T\,>\,0\qquad\qquad \mbox{ and }\qquad\qquad 
\sup_{n\geq1}\,\sup_{t\in[0,T]}\mc E_n(t)\,\leq\,C\,\mc E(0)\,,
\end{equation}
with the constant $C>0$ depending also on the fixed time $T$.

\subsubsection{Convergence and final checks} \label{sss:convergence}
Thanks to the uniform boundedness properties established in the previous part, we can pass to the limit in the equations and prove the convergence
of the sequence $\big(u^n,m^n\big)_{n\geq1}$ towards a true solution $\big(u,m\big)$ of the reduced micropolar system \eqref{eq:2D-microp}.

As before, let us restrict our attention to the case 
\begin{equation} \label{eq:p_finite}
 p\,\in\;]2,+\infty[\,\qquad\qquad \mbox{ and } \qquad\qquad s\,\geq\,2\,\left(1\,+\,\frac{1}{p}\right)\,,
\end{equation}
for which we will be able to apply the stability estimates of Proposition \ref{p:stab} (see also Remark \ref{r:hom-B} in this respect).
The other two cases mentioned at the beginning of Subsection \ref{ss:smooth-s} can be treated analogously, using Proposition
\ref{p:stab-energy} together with Lemma \ref{l:energy}, instead of Proposition \ref{p:stab}.

\medbreak
Let the time $T>0$ be fixed as in \eqref{est:u_b-E_n_final}. It follows that there exists a universal constant
$K>0$, only depending on the parameters $\alpha$ and $\kappa$, on the time $T$ and on the functional norms of the initial datum
$\big(u_0,m_0\big)$, such that
\begin{equation} \label{est:ub-total}
\sup_{n\geq1}\left(\left\|u^n\right\|_{L^\infty_T(B^{s}_{p,1})}\,+\,\left\|m^n\right\|_{L^\infty_T(B^{s-1}_{p,1})\cap L^1_T(B^{s+1}_{p,1})}\right)\,\leq\,K\,.
\end{equation}

We have already checked (see Paragraph \ref{sss:unique-general}, for instance) that, under our assumptions, all the hypotheses of Proposition \ref{p:stab}
are fulfilled. In particular, since $s>2$, one has the embedding
$B^s_{p,1}\,\hookrightarrow\,W^{2,\infty}$ (where we also use the fact that the space dimension is $d=2$). Thus if, for any $n\geq1$ and any $t\in[0,T]$,
we define the quantity
\[
 \mc J_n(t)\,:=\,\int^t_0\Big(1\,+\,\left\|\nabla u^n(\t)\right\|_{L^\infty}\,+\,
\left\|\nabla \o^n(\t)\right\|_{L^\infty}\,+\,\left\|\nabla m^n(\t)\right\|_{L^\infty}\Big)\,\dd\t\,,
\]
from \eqref{est:ub-total} we deduce that
\[
 \sup_{n\geq1}\,\sup_{t\in[0,T]}\mc J_n(t)\,\leq\,K\,,
\]
for a universal constant $K>0$ which may differ from the one used in \eqref{est:ub-total}. 
As a consequence, an application of Proposition \ref{p:stab} to the quantities
\[
 \de u\,=\,\de^{n,j}u\,:=\,u^{n+j}\,-\,u^n\qquad\qquad \mbox{ and }\qquad\qquad
 \de m\,=\,\de^{n,j}m\,:=\,m^{n+j}\,-\,m^n\,,
\]
defined for $n\geq1$ and $j\in\N$, immediately yields the estimate
\begin{align*}
&\left\|\delta^{n,j} u\right\|_{L^\infty_{T}(W^{1,p})}\,+\,\left\|\delta m^{n,j}\right\|_{L^\infty_{T}(L^p)}\,+\,\left\|\Delta \de m^{n,j}\right\|_{L^1_{T}(L^p)} \\
&\qquad\qquad\qquad\qquad\qquad\qquad\qquad
\leq\,C\,\Big(\left\|\delta^{n,j} u(0)\right\|_{W^{1,p}}\,+\,\left\|\delta^{n,j} m(0)\right\|_{B^{s-1}_{p,1}}\Big)\,,
\end{align*}
for a suitable universal constant $C>0$, depending on the same quantities as the constant $K$ appearing in \eqref{est:ub-total}.

From this bound and the strong convergence properties of the initial datum mentioned in Subsection \ref{ss:regul},
we immediately infer that the sequence
$\big(u^n,m^n\big)_{n\geq1}$ is a Cauchy sequence in the space $L^\infty_T\big(W^{1,p}\big)\times \Big(L^\infty_T\big(L^{p}\big)\,\cap\,L^1_T\big(W^{2,p}\big)\Big)$.
Thus, there exists a couple $\big(u,m\big)$ belonging to that functional space such that, in the limit $n\to+\infty$, one has the strong convergence
\[
 \big(u^n,m^n\big)\,{\displaystyle\longrightarrow}\,\big(u,m\big) \qquad \mbox{ in }\quad 
L^\infty_T\big(W^{1,p}\big)\times \Big(L^\infty_T\big(L^{p}\big)\,\cap \,L^1_T\big(W^{2,p}\big)\Big)\,.
\]
By uniform boundedness \eqref{est:ub-total}, the Fatou property of Besov spaces and the uniqueness of the weak limit, we discover that
\[
 \big(u,m\big)\,\in\,L^\infty_T\big(B^s_{p,1}\big)\,\times\,\Big(L^\infty_T\big(B^{s-1}_{p,1}\big)\,\cap\,L^1_T\big(B^{s+1}_{p,1}\big)\Big)\,.
\]
By interpolation, the strong convergence holds true in any intermediate space between $L^\infty_T\big(W^{1,p}\big)$  and $L^\infty_T\big(B^s_{p,1}\big)$
for $\big(u^n\big)_{n\geq1}$, and between 
$L^\infty_T\big(L^{p}\big)\,\cap\,L^1_T\big(W^{2,p}\big)$  and $L^\infty_T\big(B^{s-1}_{p,1}\big)\,\cap\,L^1_T\big(B^{s+1}_{p,1}\big)$
for $\big(m^n\big)_{n\geq1}$.

These strong convergence properties are more than enough to pass to the limit in the weak formulation of equations
\eqref{eq:def_m_n} and \eqref{eq:def_u_n}. In the end, one discovers that $\big(u,m\big)$ is indeed a solution of the original problem \eqref{eq:2D-microp}.
Finally, the time continuity properties of the solution with values in Besov spaces can be recovered by a standard inspection of the equations,
by using classical results on transport and transport-diffusion equations in Besov spaces
(notice that the target pressure can be analysed as done in Section \ref{s:a-priori}, see in particular Remark \ref{r:pressure}).

This concludes the proof of the (local in time) existence statement in Theorem \ref{th:global-strong}, under condition \eqref{eq:p_finite} over the integrability
index $p$ and the regularity index $s$.
As already mentioned, the other two cases mentioned at the beginning of Subsection \ref{ss:smooth-s} can be proved by using similar arguments.

\subsection{Local existence for less regular data} \label{ss:smooth-less}

We discuss here the construction of a solution to equations \eqref{eq:2D-microp} in the case of less regular initial data.
More precisely, we now assume to be in one of the following situations:
\begin{itemize}
 \item 
$p\in\,]2,+\infty[\,$ and $1\,+\,2/p\,\leq s\,<\,2\,\big(1\,+\,1/p\big)$;
 \item $p\in\,]1,2]$ and $s\,=\, 1\,+\,2/p$; observe that, in particular, one has $u_0,m_0\;\in L^2(\R^2)$;
\item under the assumptions of Theorem \ref{th:endpoint}, with the critical value $s= 1$.
\end{itemize}
In the very first instance mentioned above, we dispose of no stability estimates to prove convergence of the constructed scheme. Hence we must use a different strategy.
We use this same strategy also in the other cases, with critical values of $s$: the problem does not rely on the convergence of 
the approximation scheme, but rather on the possibility (or capability) to prove global estimates (see Section \ref{s:global} for more details).

First of all, for any $n\in\N$, we define smooth solutions $\big(u_n,m_n\big)$ as the solutions to the non-linear problem
\begin{equation} \label{eq:microp_n}
\left\{\begin{array}{l}
        \d_tu_n\,+\,(u_n\cdot\nabla)u_n\,+\,\nabla\Pi_n\,=\,-\,\alpha\,\nabla^\perp m_n \\[1ex]
        \d_tm_n\,+\,u_n\cdot\nabla m_n\,-\,\k\,\Delta m_n\,=\,\alpha\,\o_n \\[1ex]
        \div u_n\,=\,0\,,
       \end{array}
\right.
\end{equation}
related to the initial datum
\[
 \big(u_n,m_n\big)_{|t=0}\,=\,\big(u_{0,n}, m_{0,n}\big)\,,
\]
where the couple $\big(u_{0,n}, m_{0,n}\big)$ is the couple of smooth initial data defined in Subsection \ref{ss:regul}.

By the regularity property of these profiles (see again Subsection \ref{ss:regul} above), we can apply the local existence theory for large regularity indices
developed in Subsection \ref{ss:smooth-s}. We thus construct a sequence of smooth solutions $\big(u_n,m_n\big)_{n\in\N}$
to equations \eqref{eq:microp_n}, defined on some time intervals $[0,T_n]$.
Now, the arguments of Subsection \ref{ss:bes}, see in particular \eqref{est:Energy-unif} and \eqref{est:T-lower}, imply that
\[
 \inf_{n\in\N}T_n\,=\,T\,>\,0
\]
together with the boundedness properties
\[
 \big(u_n\big)_{n\in\N}\,\sqsubset\,L^\infty\big([0,T];B^{s}_{p,1}\big) \qquad \mbox{ and }\qquad
 \big(m_n\big)_{n\in\N}\,\sqsubset\,L^\infty\big([0,T];B^{s-1}_{p,1}\big)\,\cap\,L^1\big([0,T];B^{s+1}_{p,1}\big)\,.
\]

At this point, a standard compactness argument, based on an inspection of equations \eqref{eq:microp_n} in order to deduce strong convergence
(up to extraction of a subsequence) in suitable norms, allows one to pass to the limit and deduce the existence of
limit profiles
\[
 u\,\in\,L^\infty\big([0,T];B^{s}_{p,1}\big) \qquad \mbox{ and }\qquad
 m\,\in\,L^\infty\big([0,T];B^{s-1}_{p,1}\big)\,\cap\,L^1\big([0,T];B^{s+1}_{p,1}\big),
\]
which solve the original system \eqref{eq:2D-microp} with initial datum $\big(u_0,m_0\big)$ on $[0,T]\times\R^2$.
Finally, time continuity of $u$ and $m$ with values in the respective Besov spaces can be recovered by standard results on transport and transport-diffusion
equations (see \tsl{e.g.} Theorem \ref{th:transport}), where one also has to use the Besov regularity of the pressure gradient, mentioned in Remark \ref{r:pressure}
above.

We omit to present the details here, as the above depicted argument is fairily standard.

\section{Global existence} \label{s:global}

We prove here that the solutions constructed in the previous Section \ref{s:existence} are in fact global,
in the \emph{subcritical} regularity case $s>1+d/p$, under the assumptions
(G.1), (G.2) or (G.3) of Theorem \ref{th:global-strong}, or under the assumptions of Theorem \ref{th:endpoint}.

We are going to argue by contradiction. Assume that the lifespan of the solution is some finite $T^*>0$.
By a classical continuation argument, we get our contradiction, thus global existence, if we show that, under the above mentioned
assumptions, one has
\begin{equation} \label{est:to-prove}
 \forall\,T\in\,]0,T^*[\,\,,\qquad\qquad \sup_{t\in[0,T]}E^s_p(t)\,\leq\,C_*\,,
\end{equation}
where $E^s_p(t)$ has been defined in \eqref{eq:def_E} and for a suitable constant $C_*>0$, possibly depending on the value of the time $T^*>0$.

The starting point to prove the bound in \eqref{est:to-prove} is inequality \eqref{est:Energy_p} when $p\in\,]1,+\infty[\,$, or its counterpart
\eqref{est:Energy_inf} when $p=+\infty$. Let us recall these bounds here, in a compact form: resorting to the notation introduced in
\eqref{est:Energy-tot}, we have
\begin{equation} \label{est:En-new}
 \forall\,t\in[0,T^*[\,\,,\qquad E^s_p(t)\,\lesssim\,f_p(t)\,\left(\mc N_p(0)\,+\,
\int^t_0\mc I(\t)\,E^s_p(\t)\,\dd\t\right)\,,
\end{equation}
where the function $\mc I(t)$ has been defined in \eqref{eq:def_I}:
\[
\mc I(t)\,:=\,1\,+\,\|\nabla u(t)\|_{L^\infty}\,+\,\|u(t)\|_{L^\infty}^2\,+\,\|\o(\t)\|_{L^\infty}\,+\,\|m(t)\|_{L^\infty}\,. 
\]
Thus, we will be able to prove \eqref{est:to-prove} as soon as we will get suitable bounds for the function
$\mc I(t)$ on the time interval $[0,T^*[\,$: this is the goal of the following computations.

\medbreak
Fix some time $T$ such that $0\,<\,T\,<\,T^*$. In our computations below, in order to simplify the presentation, we agree on the following
notation: since the multiplicative constants of the bounds will depend on the time $T>0$,
we will generically denote them as time-dependent functions $K = K(t)$, whose precise expression may change from line to line, as in particular they
may depend on functional norms of the initial data. Notice that these functions $K(t)$ are increasing functions of $t\in\R_+$ and
respect the key property that $K\in L^\infty_{\loc}(\R_+)$.

Let us bound each addendum appearing in the definition of $\mc I(t)$ on $[0,T]$. First of all, owing to estimates \eqref{est:G-m_final} and \eqref{est:o_final}
when $p<+\infty$, or to estimate \eqref{est:G-m-o_inf} when $p=+\infty$, we have that
\begin{equation} \label{est:glob_o-m}
 \sup_{t\in[0,T]}\Big(\left\|\o(t)\right\|_{L^\infty}\,+\,\left\|m(t)\right\|_{L^\infty}\Big)\,\leq\,K(T)\,\leq\,K(T^*)\,.
\end{equation}

Next, we focus on the bound of the $L^\infty$ norm of the velocity field. 
We divide our discussion into various cases, depending on the value of $p$.

\paragraph*{The case of $1<p<2$, or $p=+\infty$, or assumption (G.3).}

We begin by treating the two cases $1<p<2$ or $p=+\infty$ together. As a matter of fact, by using estimate \eqref{est:u-Leb_inf}
in the latter case, or Remark \ref{r:u-Leb_p} when $1<p<2$, we deduce
\begin{equation} \label{est:glob-u}
 \sup_{t\in[0,T]}\,\left\|u(t)\right\|_{L^\infty}\,\lesssim\,K(T^*)\,.
\end{equation}

\paragraph*{The finite energy case.}
We now assume that either $p=2$, or $2<p<+\infty$ but $u_0$ and $m_0$ are of finite energy, namely they belong to $L^2$.
In order to bound the $L^\infty$ norm of $u$ in this case, we use Lemma \ref{l:GN-ineq} to get
\[
 \left\|u\right\|_{L^\infty}^2\,\lesssim\,\left\|u\right\|_{L^2}\;\left\|\o\right\|_{L^\infty}\,.
\]
Thanks to \eqref{est:glob_o-m} and to the energy estimate \eqref{est:u-m-energy}, we thus deduce \eqref{est:glob-u} again.

\paragraph*{}
Thus, it remains to bound the $L^\infty$ norm of $\nabla u$. Again, we divide our discussion into two parts:
this time, we distinguish between the subcritical case $s>1+d/p$ and the critical case $s=1+d/p$. In fact, we are able to prove global
existence only in the former instance, as we will see Subsection \ref{ss:subcrit}; in Subsection \ref{rmkCritical}, instead, we explain why our approach
fails to yield global well-posedness.

\subsection{The subcritical case: global estimates} \label{ss:subcrit}
We are going to focus for a while on the \emph{subcritical} case, namely the situation in which
\[
 s\,>\,1\,+\,\frac{2}{p}\,,\qquad\qquad p\in\,]1,+\infty]\,.
\]
Under this assumption,
we can apply a celebrated logarithmic interpolation inequality (see \tsl{e.g.} Proposition 2.104 of \cite{BCD}): we get
\[
 \left\|\nabla u\right\|_{L^\infty}\,\lesssim\,\left\|\o\right\|_{L^\infty}\,\log\left(e\,+\,\frac{\|u\|_{B^s_{p,1}}}{\|\o\|_{L^\infty}}\right)\,.
\]
Owing to the fact that the function $\z\,\mapsto\,\z\,\log\left(e\,+\,\frac{C}{\z}\right)$ is increasing over $\R_+$, no matter the value of $C>0$, thanks
also to \eqref{est:glob_o-m}, we deduce that
\begin{equation} \label{est:glob_Du}
\forall\,t\in[0,T]\,,\qquad\qquad  \left\|\nabla u\right\|_{L^\infty}\,\lesssim\,K(T^*)\,\log\Big(e\,+\,\|u(t)\|_{B^s_{p,1}}\Big)\,.
\end{equation}

At this point, plugging inequalities \eqref{est:glob_o-m}, \eqref{est:glob-u} and \eqref{est:glob_Du} into \eqref{est:En-new}, we find
\begin{equation} \label{est:En-new_II}
 \forall\,t\in[0,T^*[\,\,,\qquad E^s_p(t)\,\lesssim\,K(T^*)\,\left(\mc N_p(0)\,+\,
\int^t_0E^s_p(\t)\,\log\Big(e\,+\,E^s_p(\t)\Big)\,\dd\t\right)\,,
\end{equation}
which in particular yields the estimate \eqref{est:to-prove}, by an application of the Osgood lemma. This gives us the sought contradiction,
thus implying that the solution is global in time.

\subsection{A remark on the critical case}\label{rmkCritical}

To conclude, let us comment on the failure to prove global existence in the \emph{critical} case
\[
 s\,=\,1\,+\,\frac{2}{p}\,,\qquad\qquad p\in\,]1,+\infty]\,.
\]
Keeping in mind bounds \eqref{est:glob_o-m} and \eqref{est:glob-u}, it is clear that this is related to the control of the term
$\|\nabla u\|_{L^\infty}$ appearing in the definition of $\mathcal I(t)$.

In order to bound $\|\nabla u\|_{L^\infty}$ in the critical case, we notice that we cannot rely anymore on the logarithmic interpolation inequality crucially exploited above.
The finest strategy (in terms of needed regularity) consists in using the decomposition
\[
\nabla u\, =\, \Delta_{-1} \nabla u\, +\, \displaystyle\sum_{j=0}^{+\infty} \Delta_j {\rm BS}(\omega)\,,
\]
where we have written $\nabla u = {\rm BS}(\omega)$, according to the Biot-Savart law \eqref{eq:BS}, and we have isolated the low frequency term $\Delta_{-1} \nabla u$.
In the global estimate for $\nabla u$, the low frequency term $\Delta_{-1}\nabla u$ is usually bad, as the singularity of the Biot-Savart law is concentrated precisely at
low frequencies.
However, after the frequency splitting, thanks to Lemma \ref{l:bern} the low frequency term can be bounded by \tsl{e.g.} the $L^p$ norm of $\omega$,
which enjoys global estimates, as shown in \eqref{est:glob_o-m}.
Nonetheless, in this way bounding the high frequency terms inevitably yields a stronger norm: one finds
\[
 \left\|\nabla u\right\|_{L^\infty}\,\lesssim\,\|\omega\|_{L^p}\,+\,\left\|\o\right\|_{B^0_{\infty,1}}\,.
\]
Therefore, the problem is reconducted to the bound of $\o$ in the critical space $B^0_{\infty,1}$.
Now, using the same strategy described in Subsection \ref{ss:leb} and taking advantage of improved transport estimates \cite{HK, Vis}
in Besov spaces $B^0_{p,r}$, we can arrive at an estimate of the form
\[
 \left\|\o(t)\right\|_{B^0_{\infty,1}}\,+\,\left\|m(t)\right\|_{B^0_{\infty,1}}\,\lesssim\,
\left(\mc N_p(0)\,+\,\int^t_0\left\|\o(\t)\right\|_{B^{0}_{\infty,1}}\,\dd\t\right)\,\left(1\,+\,\int^t_0\left\|\nabla u(t)\right\|_{L^{\infty}}\,\dd\t\right)\,.
\]
This estimate is not of very practical use, and in fact it indicates that $\left\|\o(t)\right\|_{B^0_{\infty,1}}$ satisfies
an estimate of the kind
\[
 \eta'(t)\,\lesssim\,\left(1\,+\,\int^t_0\eta(\t)\,\dd\t\right)^2\,,
\]
which rather gives only a finite time control. So, this strategy does not work.

As a last comment, we point out that, thanks to the strategy of construction of approximate solutions explained in Subsection \ref{ss:smooth-less}
and exploiting the global bounds of the Lebesgue norms (see Subsection \ref{ss:leb}), in the critical case we can construct global solutions
at a slightly lower level of regularity. For instance, when $p=+\infty$ we can use the result of \cite{F-FD} to get solutions
having, roughly speaking, $u\in \mc C\big(\R_+;W^{1,q_0}(\R^2)\big)$ and $m\in \mc C\big(\R_+;L^{p_0}(\R^2)\big)\cap L^2_\loc\big(\R_+;L^2(\R^2)\big)$,
with $q_0$ defined in Proposition \ref{p:BS}; we can proceed similarly in the other cases
$p\in\,]1,+\infty[$ and $s=1+2/p$ (with, possibly, a finite energy condition on the initial datum).
These solutions are also unique, in light of the statement of Proposition \ref{p:uni-energy}. However, what is missing is the
propagation the critical Besov norm for all times. The strategy presented above, inspired by \tsl{e.g.} \cite{A-H-K, HK}, fails to yield a global control
on such a norm, essentially because of the presence of the (linear!) forcing terms on the right-hand side of equations \eqref{eq:2D-microp}.


%


\end{document}